\begin{document}
\author{Marek Kara\'{s}}
\title{There is no tame automorphism of $\Bbb{C}^{3}$ with multidegree $(4,5,6)$}
\keywords{polynomial automorphism, tame automorphism, multidegree.\\
\textit{2010 Mathematics Subject Classification:} 14Rxx,14R10}
\date{}
\maketitle

\begin{abstract}
It is known that not each triple $\left( d_{1},d_{2},d_{3}\right) $ of
positive integers is a multidegree of a tame automorphism of $\Bbb{C}^{3}.$
In this paper we show that there is no tame automorphism of $\Bbb{C}^{3}$
with multidegree $\left( 4,5,6\right) .$ To do this we show that there is no
pair of polynomials $P,Q\in \Bbb{C}\left[ x,y,z\right] $ with certain
properties. These properties do not seem particularly restrictive so the
non-existence result can be interesting in its own right.
\end{abstract}

\section{Introduction}

Let $F=\left( F_{1},\ldots ,F_{n}\right) :\Bbb{C}^{n}\rightarrow \Bbb{C}^{n}$
be any polynomial mapping. By multidegree of $F,$ denoted $\limfunc{mdeg}F,$
we mean the sequence $\left( \deg F_{1},\ldots ,\deg F_{n}\right) .$ We can
also consider the map $\limfunc{mdeg}:\limfunc{End}\left( \Bbb{C}^{n}\right)
\rightarrow \Bbb{N}^{n},$ where $\limfunc{End}\left( \Bbb{C}^{n}\right) $
denotes the set of polynomial endomorphisms of $\Bbb{C}^{n}.$ It is trivial
that $\limfunc{mdeg}\left( \limfunc{Aut}\left( \Bbb{C}^{1}\right) \right)
=\left\{ 1\right\} ,$ where $\limfunc{Aut}\left( \Bbb{C}^{n}\right) $
denotes the group of polynomial automorphisms of $\Bbb{C}^{n}.$ Let $%
\limfunc{Tame}\left( \Bbb{C}^{n}\right) $ denote the group of tame
automorphisms of $\Bbb{C}^{n}.$ Then, by the Jung \cite{Jung} and van der
Kulk \cite{Kulk} theorem, we have $\limfunc{mdeg}\left( \limfunc{Aut}\left( 
\Bbb{C}^{2}\right) \right) =\limfunc{mdeg}\left( \limfunc{Tame}\left( \Bbb{C}%
^{2}\right) \right) =\left\{ \left( d_{1},d_{2}\right) \in \Bbb{N}%
^{2}:d_{1}|d_{2}\text{ or }d_{2}|d_{1}\right\} .$

In higher dimensions the propblem is still not well recognized, even in
dimension three. The very first result in this direction \cite{Karas} says
that there is no tame automorphism of $\Bbb{C}^{3}$ with multidegree $\left(
3,4,5\right) .$ In the same paper it was also observed that for all $%
d_{3}\geq d_{2}\geq 2,$ $\left( 2,d_{2},d_{3}\right) \in \limfunc{mdeg}%
\left( \limfunc{Tame}\left( \Bbb{C}^{3}\right) \right) .$ Later \cite{Karas2}
it was proven that if $p_{2}>p_{1}>2$ are prime numbers, then $\left(
p_{1},p_{2},d_{3}\right) \in \limfunc{mdeg}\left( \limfunc{Tame}\left( \Bbb{C%
}^{3}\right) \right) \,$if and only if $d_{3}\in p_{1}\Bbb{N}+p_{2}\Bbb{N}$
(i.e. $d_{3}\,$is a linear combination of $p_{1}$ and $p_{2}$ with
coefficients in $\Bbb{N}).$ The next step was establishing \cite{Karas3} the
equality $\left\{ \left( 3,d_{2},d_{3}\right) :3\leq d_{2}\leq d_{3}\right\}
\cap \limfunc{mdeg}\left( \limfunc{Tame}\left( \Bbb{C}^{3}\right) \right)
=\left\{ \left( 3,d_{2},d_{3}\right) :3\leq d_{2}\leq d_{3},\text{ }3|d_{2}%
\text{ or }d_{3}\in 3\Bbb{N}+d_{2}\Bbb{N}\right\} .$ A similar equality can
be shown for triples of the form $\left( 5,d_{2},d_{3}\right) $ with $5\leq
d_{2}\leq d_{3}.\,$But this is more difficult to prove \cite{Karas4}. In
establishing a similar equality for the set $\left\{ \left(
7,d_{2},d_{3}\right) :7\leq d_{2}\leq d_{3}\right\} \cap \limfunc{mdeg}%
\left( \limfunc{Tame}\left( \Bbb{C}^{3}\right) \right) $ there is one
obstruction: the triple $\left( 7,8,12\right) .$

Notice that $3,5$ and $7$ are prime numbers. For $d_{1}=4,$ the first
composite number, we have no description of the entire set $\left\{ \left(
4,d_{2},d_{3}\right) :4\leq d_{2}\leq d_{3}\right\} \cap \limfunc{mdeg}%
\left( \limfunc{Tame}\left( \Bbb{C}^{3}\right) \right) ,$ but only some
partial information. It is not hard to prove that if $d_{3}\geq d_{2}\geq 4$
are even numbers, then $\left( 4,d_{2},d_{3}\right) \in \limfunc{mdeg}\left( 
\limfunc{Tame}\left( \Bbb{C}^{3}\right) \right) .\,$Also we can prove that
if $d_{3}\geq d_{2}\geq 4$ are odd, then $\left( 4,d_{2},d_{3}\right) \in 
\limfunc{mdeg}\left( \limfunc{Tame}\left( \Bbb{C}^{3}\right) \right) $ if
and only if $d_{3}\in 4\Bbb{N}+d_{2}\Bbb{N}.$ But if $d_{2}$ is odd and $%
d_{3}$ is even, or vice versa, we still do not have a complete description
of the set $\left\{ \left( 4,d_{2},d_{3}\right) :4\leq d_{2}\leq
d_{3}\right\} \cap \limfunc{mdeg}\left( \limfunc{Tame}\left( \Bbb{C}%
^{3}\right) \right) .$ The first unknown (up to now) thing has been whether $%
\left( 4,5,6\right) \in \limfunc{mdeg}\left( \limfunc{Tame}\left( \Bbb{C}%
^{3}\right) \right) .$

The goal of this paper is to prove the following theorem.

\begin{theorem}
\label{tw_main_4910}There is no tame automorphism of $\Bbb{C}^{3}$ with
multidegree $\left( 4,5,6\right) .$
\end{theorem}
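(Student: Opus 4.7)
The plan is to argue by contradiction using the Shestakov--Umirbaev reduction theory for tame automorphisms of $\Bbb{C}^{3}.$ Suppose $F=\left( F_{1},F_{2},F_{3}\right) $ is a tame automorphism of $\Bbb{C}^{3}$ with multidegree $\left( 4,5,6\right) ;$ after permuting coordinates I may assume $\deg F_{1}=4,$ $\deg F_{2}=5,$ and $\deg F_{3}=6.$ The Shestakov--Umirbaev theorem (together with its refinements by Kuroda) asserts that any non-affine tame $F$ admits either an \emph{elementary reduction}---replacing some $F_{i}$ by $F_{i}-g\left( F_{j},F_{k}\right) $ of strictly smaller degree---or one of four types of \emph{Shestakov--Umirbaev reduction}, each of which imposes restrictive divisibility relations on the degrees $\deg F_{i}$ and proportionality relations on the leading forms $\overline{F_{i}}.$

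First I would rule out all elementary reductions by a direct Diophantine check. For $F_{i}-g\left( F_{j},F_{k}\right) $ to have strictly smaller degree than $F_{i},$ the polynomial $g$ must contain a monomial $u^{a}v^{b}$ with $a\deg F_{j}+b\deg F_{k}=\deg F_{i}$ for some $a,b\in \Bbb{N}.$ However, each of the three equations
\begin{equation*}
4a+5b=6,\qquad 4a+6b=5,\qquad 5a+6b=4
\end{equation*}
admits no solution with $a,b\in \Bbb{N},$ so no elementary reduction is available.

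Next I would examine the four types of Shestakov--Umirbaev reductions of $F_{i}$ by the pair $\left( F_{j},F_{k}\right) .$ In each type one is forced to have a proportionality of the form $\overline{F_{k}}^{2}=c\,\overline{F_{j}}^{s}$ for some $c\in \Bbb{C}^{*}$ and some odd integer $s\geq 3,$ which in particular gives $2\deg F_{k}=s\deg F_{j}.$ Checking all ordered pairs from $\left\{ 4,5,6\right\} ,$ the only one compatible with such a relation is $\left( \deg F_{j},\deg F_{k}\right) =\left( 4,6\right) $ with $s=3;$ the reduced coordinate must then be $F_{2},$ and one necessarily has $\overline{F_{3}}^{2}=c\,\overline{F_{1}}^{3}.$ In other words, $\overline{F_{1}}$ is (up to scalar) the square and $\overline{F_{3}}$ the cube of a common irreducible form of degree $2.$

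The main obstacle, and the core of the proof, is to rule out this single remaining case. The four reduction types translate, after standard manipulations, into the existence of a pair of polynomials $P,Q\in \Bbb{C}\left[ x,y,z\right] $ obtained as appropriate normalizations of $\overline{F_{1}}$ and $\overline{F_{3}}$ (with $Q^{2}$ proportional to $P^{3}$), together with an algebraic compatibility condition linking $P$ and $Q$ to $\overline{F_{2}}$ and a degree-theoretic constraint provided by the Shestakov--Umirbaev inequality applied to a polynomial in $P$ and $Q.$ This is precisely the pair of polynomials whose non-existence is announced in the abstract; ruling it out---via a case-by-case analysis that exploits the tight numerology of the degrees $4,5,6$ against the value of $\deg \left[ P,Q\right] $ forced by the proportionality $Q^{2}\sim P^{3}$---is the delicate step that occupies the bulk of the argument and yields the contradiction.
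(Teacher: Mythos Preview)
Your elimination of elementary reductions is the genuine gap. The claim that an elementary reduction of $F_{i}$ by $\left( F_{j},F_{k}\right) $ forces a monomial relation $a\deg F_{j}+b\deg F_{k}=\deg F_{i}$ is valid only when the leading forms $\overline{F_{j}},\overline{F_{k}}$ are algebraically \emph{independent}. For the pair $\left( F_{1},F_{3}\right) $ with degrees $\left( 4,6\right) $ this is not guaranteed: if, say, $\overline{F_{1}}=H^{2}$ and $\overline{F_{3}}=\alpha H^{3}$ for a quadratic form $H$, then $g\left( F_{1},F_{3}\right) $ may have degree strictly below the naive value, and the equation $4a+6b=5$ having no solutions does not preclude $\deg g\left( F_{1},F_{3}\right) =5$. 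In the paper this is exactly the case that survives: applying the Shestakov--Umirbaev inequality to $g\left( F_{1},F_{3}\right) $ with $p=2$ shows only that an elementary reduction of $F_{2}$ forces $\deg\left[ F_{1},F_{3}\right] \leq 3$, not that it is impossible outright.

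Consequently the hard step does not sit inside the Shestakov--Umirbaev reductions of types I--IV at all; by the $\Pi$-theorem (since $3\nmid d_{1}=4$) those reductions need not be considered, and your discussion of them is beside the point. The pair $\left( P,Q\right) $ that must be excluded is not a pair of leading forms with $Q^{2}\sim P^{3}$, but the \emph{full} polynomials $P=F_{1}\circ L^{-1}=x+P_{2}+P_{3}+P_{4}$ and $Q=F_{3}\circ L^{-1}=z+Q_{2}+\cdots+Q_{6}$ (with $P_{4},Q_{6}\neq 0$) subject to the Poisson-bracket constraint $\deg\left[ P,Q\right] \leq 3$. Showing that no such pair exists is the content of the paper's $H$-reduction method, and your outline does not supply an argument for it; the proportionality $\overline{Q}^{2}\sim\overline{P}^{3}$ is merely the first of a long cascade of consequences of $\deg\left[ P,Q\right] <10,9,8,\ldots,4$ that eventually yields a contradiction.
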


To do this, we first prove that existence of such a tame automorphism would
imply existence of pair of polynomials $P,Q\in \Bbb{C}\left[ x,y,z\right] $
with certain properties. Next we show that such a pair does not exist. This
is the most difficult step. Since these properties do not look very
restrictive, the non-existence result can be interesting in its own right.
To prove it we develop a method that can be called $H$-reduction.

\section{The first reduction}

In this section we prove that existence of a tame automorphism of $\Bbb{C}%
^{3}$ with multidegree $\left( 4,5,6\right) $ implies existence of a pair of
polynomials with some special properties.

\begin{theorem}
\label{tw_456_implies}If there exists a tame automorphism $F=\left(
F_{1},F_{2},F_{3}\right) $ of $\Bbb{C}^{3}$ with $\limfunc{mdeg}F=\left(
4,5,6\right) ,$ then there exists a pair of polynomials $P,Q\in \Bbb{C}%
\left[ x,y,z\right] $ such that 
\begin{eqnarray*}
P &=&x+P_{2}+P_{3}+P_{4},\qquad P_{4}\neq 0, \\
Q &=&z+Q_{2}+\cdots +Q_{6},\qquad Q_{6}\neq 0,
\end{eqnarray*}
and 
\begin{equation*}
\deg \left[ P,Q\right] \leq 3,
\end{equation*}
where $P_{i},Q_{i}$ are homogeneous polynomials of degree $i.$
\end{theorem}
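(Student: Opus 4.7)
The plan is to invoke the Shestakov--Umirbaev theory of reductions for tame automorphisms of $\mathbb{C}^3$. That theory says any tame automorphism either admits an elementary reduction---composing with an elementary map strictly lowers the multidegree---or else admits a non-elementary (Shestakov--Umirbaev) reduction, which singles out two of the three components with a specific leading-form relation and furnishes a strong degree bound on an associated Poisson-type bracket.

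Given $F=(F_1,F_2,F_3)$ with $\operatorname{mdeg}F=(4,5,6)$, I would first rule out every elementary reduction. An elementary reduction of $F_i$ requires the leading form $\bar F_i$ to be a polynomial in the leading forms of the other two components. For $F_2$ (degree $5$) this needs $4a+6b=5$ with $a+b\ge 2$, impossible; for $F_3$ (degree $6$) it needs $4a+5b=6$ with $a+b\ge 2$, again impossible; and for $F_1$ the condition against strictly higher-degree partners is vacuous. Hence no elementary reduction exists, so Shestakov--Umirbaev forces a non-elementary reduction on $F$.

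Next I would normalise. Pre-composing $F$ with the inverse of its linear part at the origin is an affine change that preserves the multidegree and puts $F$ in the form $F_1=x+\cdots$, $F_2=y+\cdots$, $F_3=z+\cdots$. The non-elementary reduction chooses an ``active pair'' whose leading forms satisfy a relation $\bar P^{\,m}=c\,\bar Q^{\,n}$ for coprime $m,n$; for our degrees the only pair admitting such a relation is $(F_1,F_3)$ with $(m,n)=(3,2)$, since $\bar F_1^{\,3}$ and $\bar F_3^{\,2}$ are the only monomial expressions in a pair of $\bar F_i$'s of a common degree ($=12$). Setting $P:=F_1$ and $Q:=F_3$ gives polynomials in the claimed normal form, with $P_4\ne 0$ and $Q_6\ne 0$ because $P_4=\bar F_1$ and $Q_6=\bar F_3$.

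The main obstacle, and the whole substance of the theorem, is the inequality $\deg[P,Q]\le 3$. This is precisely what the Shestakov--Umirbaev degree estimate yields for the non-elementary reduction: the bracket degree is bounded by an explicit linear combination of $\deg P$, $\deg Q$ and the degree of the reduced component $F_2$, and specialising to $(\deg P,\deg Q,\deg F_2)=(4,6,5)$ collapses the bound to $3$. The fiddly part is the case analysis over the possible Shestakov--Umirbaev reduction types: one must check that each type compatible with the forbidden elementary reductions really does specialise to a $(4,6)$-pair satisfying $\bar P^{\,3}=c\,\bar Q^{\,2}$, and that each type yields the same bracket-degree bound after normalisation. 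Once these cases are handled uniformly, the theorem follows.
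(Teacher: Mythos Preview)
Your argument has a genuine gap at the point where you rule out the elementary reduction of $F_2$. You claim such a reduction forces $\bar F_2$ to be a polynomial in $\bar F_1,\bar F_3$, hence $4a+6b=5$ for some nonnegative integers $a,b$, which is impossible. But that criterion is only valid when $\bar F_1$ and $\bar F_3$ are algebraically \emph{independent}; when they are dependent---which is precisely what happens once $\deg[F_1,F_3]<\deg F_1+\deg F_3$---the degree of $g(F_1,F_3)$ can drop below the naive weighted degree of $g$, and nothing prevents $\deg g(F_1,F_3)=5$. (The side condition ``$a+b\ge 2$'' is also unexplained and plays no role.) So the reduction $(F_1,\,F_2-g(F_1,F_3),\,F_3)$ is not excluded by your argument.

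In fact this surviving case is what carries the proof, and the paper exploits it rather than discarding it. The paper invokes the $\Pi$-theorem (Theorem~\ref{tw_reduc_type_4}), which for $(4,5,6)$---where $3\nmid d_1=4$---already disposes of the non-elementary Shestakov--Umirbaev reductions, so a tame $F$ with this multidegree \emph{must} admit an elementary reduction. The reductions of $F_1$ and of $F_3$ are then eliminated via the degree estimate of Proposition~\ref{prop_deg_g_fg}, leaving only the reduction of $F_2$. Applying that same estimate to this remaining case with $p=\deg F_1/\gcd(4,6)=2$ forces $\deg[F_1,F_3]\le 3$; composing with the inverse of the linear part then yields $P=F_1\circ L^{-1}$, $Q=F_3\circ L^{-1}$ in the required form. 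Your strategy inverts this logic---you try to kill all elementary reductions and extract the bracket bound from a non-elementary one---but the non-elementary reductions are already out of play here, and the ``fiddly case analysis'' you defer to the end is neither carried out nor the operative mechanism.
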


Let us recall that for any $f,g\in k[x_{1},\ldots ,x_{n}]$ we denote by $%
\left[ f,g\right] $ the Poisson bracket of $f$ and $g,$ which is the
following formal sum: 
\begin{equation*}
\sum_{1\leq i<j\leq n}\left( \frac{\partial f}{\partial x_{i}}\frac{\partial
g}{\partial x_{j}}-\frac{\partial f}{\partial x_{j}}\frac{\partial g}{%
\partial x_{i}}\right) \left[ x_{i},x_{j}\right] ,
\end{equation*}
where $[x_{i},x_{j}]$ are formal objects satisfying the condition 
\begin{equation*}
\lbrack x_{i},x_{j}]=-[x_{j},x_{i}]\qquad \text{for all }i,j.
\end{equation*}
We also define 
\begin{equation*}
\deg \left[ x_{i},x_{j}\right] =2\qquad \text{for all }i\neq j,
\end{equation*}
$\deg 0=-\infty $ and 
\begin{equation*}
\deg \left[ f,g\right] =\max_{1\leq i<j\leq n}\deg \left\{ \left( \frac{%
\partial f}{\partial x_{i}}\frac{\partial g}{\partial x_{j}}-\frac{\partial f%
}{\partial x_{j}}\frac{\partial g}{\partial x_{i}}\right) \left[
x_{i},x_{j}\right] \right\} .
\end{equation*}

Since $2-\infty =-\infty ,$ we have 
\begin{equation*}
\deg [f,g]=2+\underset{1\leq i<j\leq n}{\max }\deg \left( \frac{\partial f}{%
\partial x_{i}}\frac{\partial g}{\partial x_{j}}-\frac{\partial f}{\partial
x_{j}}\frac{\partial g}{\partial x_{i}}\right) .
\end{equation*}

From the above equality we have 
\begin{equation}
\deg \left[ f,g\right] \leq \deg f+\deg g.
\label{deg_Poisson_bracket_deg_f_g}
\end{equation}

Actually, we can show more:

\begin{theorem}
\label{tw_456_implies_rown}If there exists a tame automorphism $F=\left(
F_{1},F_{2},F_{3}\right) $ of $\Bbb{C}^{3}$ with $\limfunc{mdeg}F=\left(
4,5,6\right) ,$ then there exists a pair of polynomials $P,Q\in \Bbb{C}%
\left[ x,y,z\right] $ such that 
\begin{eqnarray*}
P &=&x+P_{2}+P_{3}+P_{4},\qquad P_{4}\neq 0, \\
Q &=&z+Q_{2}+\cdots +Q_{6},\qquad Q_{6}\neq 0,
\end{eqnarray*}
and 
\begin{equation*}
\deg \left[ P,Q\right] =3,
\end{equation*}
where $P_{i},Q_{i}$ are homogeneous polynomials of degree $i.$
\end{theorem}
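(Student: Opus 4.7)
The plan is to derive Theorem~\ref{tw_456_implies_rown} from Theorem~\ref{tw_456_implies} by excluding, for the pair $(P,Q)$ supplied by the latter, the possibility $\deg[P,Q]<3$. Because the linear parts of $P$ and $Q$ are $x$ and $z$, a substitution argument at lowest degree shows that $P$ and $Q$ are algebraically independent in $\Bbb{C}[x,y,z]$, so $[P,Q]\neq 0$; combined with Theorem~\ref{tw_456_implies}, it therefore suffices to exclude $\deg[P,Q]\leq 2$.

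Writing $A_{ij}=\partial_{i}P\,\partial_{j}Q-\partial_{j}P\,\partial_{i}Q$, we have $\deg[P,Q]=2+\max_{i<j}\deg A_{ij}$. From $P=x+P_{2}+P_{3}+P_{4}$ and $Q=z+Q_{2}+\cdots+Q_{6}$ one reads off the constant terms $A_{xy}(0)=0$, $A_{xz}(0)=1$, $A_{yz}(0)=0$. Assuming $\deg[P,Q]\leq 2$, every $A_{ij}$ is a constant, so $A_{xy}=A_{yz}=0$ and $A_{xz}=1$. Since $A_{xz}\neq 0$ in $\Bbb{C}(x,y,z)$, the columns $(P_{x},Q_{x})^{T}$ and $(P_{z},Q_{z})^{T}$ are $\Bbb{C}(x,y,z)$-linearly independent; writing $(P_{y},Q_{y})^{T}=u\,(P_{x},Q_{x})^{T}+v\,(P_{z},Q_{z})^{T}$ and expanding the remaining two minors gives $A_{xy}=v\cdot A_{xz}$ and $A_{yz}=u\cdot A_{xz}$, which forces $u=v=0$. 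Hence $\partial_{y}P=\partial_{y}Q=0$, so $P,Q\in\Bbb{C}[x,z]$.

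The pair $(P,Q)$ is then a planar polynomial map with constant Jacobian $\det J(P,Q)=A_{xz}=1$, $\deg P=4$ and $\deg Q=6$. Since $\max(\deg P,\deg Q)=6$ lies well within the range in which the two-dimensional Jacobian conjecture is known, $(P,Q)$ is an automorphism of $\Bbb{C}^{2}$ with $\limfunc{mdeg}(P,Q)=(4,6)$. By the Jung--van der Kulk theorem, $\limfunc{mdeg}(\limfunc{Aut}(\Bbb{C}^{2}))=\{(d_{1},d_{2}):d_{1}\mid d_{2}\text{ or }d_{2}\mid d_{1}\}$, which does not contain $(4,6)$. This contradiction excludes $\deg[P,Q]\leq 2$, giving $\deg[P,Q]=3$.

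The delicate step is the invocation of the planar Jacobian conjecture. A self-contained substitute would argue by leading-term analysis: the degree-$8$ component of $\det J(P,Q)=1$ first forces $\det J(P_{4},Q_{6})=0$, so $P_{4}=c_{1}h^{a}$, $Q_{6}=c_{2}h^{b}$ for a common homogeneous $h\in\Bbb{C}[x,z]$ with $ad=4$, $bd=6$, hence $\deg h\in\{1,2\}$. The vanishing of the successive homogeneous components of $\det J(P,Q)-1$ in degrees $7,6,\ldots,1$ would then yield cascading linear constraints on $Q_{5},P_{3},Q_{4},P_{2},Q_{3},Q_{2}$ that should eventually collide with $P_{4}\neq 0$ or $Q_{6}\neq 0$; working out this cascade is the principal technical obstacle if one wishes to bypass the planar Jacobian conjecture.
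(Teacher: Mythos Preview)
Your proof is correct and follows essentially the same route as the paper: apply Theorem~\ref{tw_456_implies}, rule out $\deg[P,Q]\leq 2$ by showing $P,Q\in\Bbb{C}[x,z]$ with constant nonzero Jacobian, invoke Moh's result on the planar Jacobian conjecture in low degree, and contradict Jung--van der Kulk via $4\nmid 6$. The only difference is that where the paper cites Lemma~\ref{lem_deg_poiss_2} to conclude $P,Q\in\Bbb{C}[x,z]$, you supply a direct linear-algebra argument computing the minors $A_{ij}$ and solving for $(P_y,Q_y)$ in terms of the independent columns $(P_x,Q_x)$, $(P_z,Q_z)$---this is effectively an inline proof of that lemma in the special case at hand.
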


\subsection{Some useful results}

Here we collect some useful results from other papers. The first one is the
following result of Shestakov and Umirbaev.

\begin{theorem}
\textit{(\cite{sh umb1}, Theorem 2)}\label{tw_deg_g_fg} Let $f,g\in \Bbb{C}%
[x_{1},\ldots ,x_{n}]$ be a $p$-reduced pair, and let $G(x,y)\in \Bbb{C}%
[x,y] $ with $\deg _{y}G(x,y)=pq+r,0\leq r<p.$ Then 
\begin{equation*}
\deg G(f,g)\geq q\left( p\deg g-\deg g-\deg f+\deg [f,g]\right) +r\deg g.
\end{equation*}
\end{theorem}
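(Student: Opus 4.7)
The plan is to construct $(P,Q)$ directly from $F$ via Shestakov--Umirbaev reduction theory and then pin down $\deg[P,Q]=3$ by a two-sided argument, obtaining the upper bound from the algebraic relations on the top-degree parts and the lower bound from Theorem~\ref{tw_deg_g_fg}.

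\textbf{Normalization and extraction of $P$, $Q$.} I first compose $F$ on the left with a translation and on the right with the inverse of its Jacobian at the origin, so that each $F_i=x_i+(\text{higher order terms})$ while preserving tameness and multidegree $(4,5,6)$. I then invoke the Shestakov--Umirbaev reduction theorem: after performing any available elementary reductions (each of which strictly decreases $\deg F_1+\deg F_2+\deg F_3=15$ and hence must terminate), $F$ reaches a non-elementarily-reducible form. The arithmetic $2\cdot 6=3\cdot 4=12$ then forces the algebraic relation $\bar{F_3}^{\,2}=c\,\bar{F_1}^{\,3}$ on the top-degree parts for some $c\in\Bbb{C}^{*}$, and I set $P:=F_1$, $Q:=F_3$, which automatically have the required linear parts $x,z$ and nonzero leading forms $P_4,Q_6$.

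\textbf{Upper bound $\deg[P,Q]\le 3$.} Let $R:=Q^2-cP^3$, so $\deg R<12$. The Leibniz-type identity $[P,Q^2]=2Q[P,Q]$ together with $[P,P^3]=0$ gives $2Q[P,Q]=[P,R]$. A component-by-component analysis of the Shestakov--Umirbaev reductions---ruling out, for each homogeneous component of $R$ of degree strictly greater than $5$, a further reduction of $F$ that would violate the multidegree $(4,5,6)$---forces $\deg R\le 5$. Therefore
\begin{equation*}
6+\deg[P,Q]\ =\ \deg(2Q[P,Q])\ =\ \deg[P,R]\ \le\ \deg P+\deg R\ \le\ 4+5\ =\ 9,
\end{equation*}
giving $\deg[P,Q]\le 3$.

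\textbf{Lower bound $\deg[P,Q]\ge 3$ (the main obstacle).} Suppose for contradiction that $\deg[P,Q]\le 2$. The relation $\bar{Q}^{\,2}=c\bar{P}^{\,3}$ makes $(P,Q)$ a $p$-reduced pair in the Shestakov--Umirbaev sense with $p=2$, so Theorem~\ref{tw_deg_g_fg} yields
\begin{equation*}
\deg G(P,Q)\ \ge\ q(p\deg Q-\deg Q-\deg P+\deg[P,Q])+r\deg Q\ =\ q(2+\deg[P,Q])+6r
\end{equation*}
for every $G\in\Bbb{C}[u,v]$ with $\deg_v G=2q+r$. The tame structure of $F$ produces, through the chain of reductions already applied, a polynomial $G$ with $\deg_v G\ge 3$ for which $G(P,Q)$ matches $F_2$ (or an intermediate polynomial in the reduction chain) modulo strictly lower-degree corrections; the above estimate then forces this polynomial to have degree at least $8$, contradicting $\deg F_2=5$. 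Combined with the upper bound, this gives $\deg[P,Q]=3$. The principal difficulty lies in isolating exactly the right $G$ and carrying the matching through the chain of reductions so that Theorem~\ref{tw_deg_g_fg} bites with a genuinely restrictive choice of $q$; this bookkeeping is the delicate part of the proof.
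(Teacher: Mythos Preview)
Your proposal does not address the stated theorem at all. The statement labeled \texttt{tw\_deg\_g\_fg} is the Shestakov--Umirbaev degree estimate for $p$-reduced pairs, quoted from \cite{sh umb1} and used in this paper as a black box; the paper gives no proof of it, and none is expected. Your write-up instead attempts to prove something like Theorem~\ref{tw_456_implies_rown}: that a tame automorphism with multidegree $(4,5,6)$ produces a pair $P,Q$ with $\deg[P,Q]=3$. Indeed, you explicitly \emph{invoke} Theorem~\ref{tw_deg_g_fg} in your ``lower bound'' step, so you cannot be proving it.

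Even read as an attempt at Theorem~\ref{tw_456_implies_rown}, the argument has substantial gaps. The paper's route is quite different: it uses the $\Pi$-theorem (Theorem~\ref{tw_reduc_type_4}) to reduce to excluding elementary reductions, then a short case analysis with Proposition~\ref{prop_deg_g_fg} shows that the only possible elementary reduction forces $\deg[F_1,F_3]\le 3$, whence $P=F_1\circ L^{-1}$, $Q=F_3\circ L^{-1}$; the equality $\deg[P,Q]=3$ then comes from Lemma~\ref{lem_deg_poiss_2} (if $\deg[P,Q]=2$ then $P,Q\in\Bbb{C}[x,z]$), Moh's theorem, and Jung--van der Kulk. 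By contrast, your ``upper bound'' step asserts without justification that a ``component-by-component analysis'' forces $\deg R\le 5$; this is the entire content and is not argued. Your ``lower bound'' step is also only a sketch: you claim the reduction chain produces some $G$ with $\deg_v G\ge 3$ and $\deg G(P,Q)=5$, but you neither identify $G$ nor explain why such a $G$ must exist---and in the paper's actual argument no such $G$ arises.
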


The notions of a *-reduced and a $p$-reduced pair, used in the above
theorem, are defined as follows.

\begin{definition}
\textit{(\cite{sh umb1}, Definition 1) }\label{def_*-red}A pair $f,g\in \Bbb{%
C}[x_{1},\ldots ,x_{n}]$ is called *-reduced if\newline
(i) $f,g$ are algebraically independent;\newline
(ii) $\overline{f},\overline{g}$ are algebraically dependent, where $%
\overline{h}$ denotes the highest homogeneous part of $h$;\newline
(iii) $\overline{f}\notin k[\overline{g}]$ and $\overline{g}\notin k[%
\overline{f}].$
\end{definition}

\begin{definition}
\textit{(\cite{sh umb1}, Definition 1) }Let $f,g\in \Bbb{C}[x_{1},\ldots
,x_{n}]$ be a *-reduced pair with $\deg f<\deg g.$ Put $p=\frac{\deg f}{\gcd
(\deg f,\deg g)}.$ Then $f,g$ is called a $p$-reduced pair.
\end{definition}

The estimate from Theorem \ref{tw_deg_g_fg} is true even if the condition
(ii) of Definition \ref{def_*-red} is not satisfied. We have the following

\begin{proposition}
\label{prop_deg_g_fg}(\cite{Karas4}, \textit{Proposition 2.6}) Let $f,g\in 
\Bbb{C}[x_{1},\ldots ,x_{n}]$ satisfy conditions (i) and (iii) of Definition 
\ref{def_*-red}. Assume that $\deg f<\deg g,$ put 
\begin{equation*}
p=\frac{\deg f}{\gcd \left( \deg f,\deg g\right) },
\end{equation*}
and let $G(x,y)\in \Bbb{C}[x,y]$ with $\deg _{y}G(x,y)=pq+r,0\leq r<p.$ Then 
\begin{equation*}
\deg G(f,g)\geq q\left( p\deg g-\deg g-\deg f+\deg [f,g]\right) +r\deg g.
\end{equation*}
\end{proposition}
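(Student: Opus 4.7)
My plan is to deduce this proposition from Theorem \ref{tw_deg_g_fg} by a simple case split on whether condition (ii) of Definition \ref{def_*-red} holds for the pair $f,g$. If it does hold, then $(f,g)$ is already $*$-reduced (condition (iii) is assumed, (i) is assumed, and we additionally have (ii)), hence $p$-reduced in the sense of the definition, and the desired inequality is exactly the conclusion of the Shestakov--Umirbaev theorem. So the only new work is when (ii) fails, i.e.\ when $\overline{f}$ and $\overline{g}$ are algebraically independent.

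In that second case I would argue directly, bypassing the Poisson bracket machinery. The key observation is the following standard fact about weighted degrees: if $\overline{f},\overline{g}$ are algebraically independent and we give $x,y$ the weights $\deg f,\deg g$, then for every $G(x,y)\in\mathbb{C}[x,y]$
\begin{equation*}
\deg G(f,g)=\max\bigl\{\,i\deg f+j\deg g\;:\;\text{the coefficient of }x^{i}y^{j}\text{ in }G\text{ is nonzero}\,\bigr\}.
\end{equation*}
Indeed, if $H$ is the sum of those monomials of $G$ of maximal weighted degree $d$, then $H(\overline{f},\overline{g})\neq 0$ by algebraic independence, it is homogeneous of degree $d$, and $G(f,g)-H(\overline{f},\overline{g})$ has strictly smaller degree, so $\deg G(f,g)=d$.

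Now, since $\deg_{y}G=pq+r$, the polynomial $G$ contains some monomial $x^{s}y^{pq+r}$ with nonzero coefficient, which yields
\begin{equation*}
\deg G(f,g)\;\geq\;s\deg f+(pq+r)\deg g\;\geq\;(pq+r)\deg g\;=\;qp\deg g+r\deg g.
\end{equation*}
Comparing with the target bound $q(p\deg g-\deg g-\deg f+\deg[f,g])+r\deg g$, the difference is exactly $q(\deg f+\deg g-\deg[f,g])$, which is nonnegative by the general estimate \eqref{deg_Poisson_bracket_deg_f_g}. This closes the second case.

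I do not anticipate a real obstacle here: once the problem is split into these two cases, the $*$-reduced case is the Shestakov--Umirbaev theorem and the independent case reduces to the weighted-degree formula combined with the universal bound $\deg[f,g]\leq\deg f+\deg g$. The only point that needs some care is the justification of the weighted-degree formula in the independent case, but this is immediate from the definition of algebraic independence applied to the top weighted part of $G$.
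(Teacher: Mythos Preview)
Your argument is correct. The case split on condition (ii) is the natural one, and both branches are handled cleanly: in the $*$-reduced case you invoke Theorem~\ref{tw_deg_g_fg} verbatim, and in the case where $\overline{f},\overline{g}$ are algebraically independent your weighted-degree computation is sound, since the top weighted-homogeneous part $H$ of $G$ satisfies $H(\overline{f},\overline{g})\neq 0$ and is homogeneous of the claimed degree. The final comparison using \eqref{deg_Poisson_bracket_deg_f_g} is also correct.

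As for comparison with the paper: this proposition is not proved in the present paper at all --- it is quoted from \cite{Karas4} (Proposition~2.6) in the subsection ``Some useful results'' and used as a black box. The sentence immediately preceding the proposition, ``The estimate from Theorem~\ref{tw_deg_g_fg} is true even if the condition (ii) of Definition~\ref{def_*-red} is not satisfied,'' does however indicate that the intended argument in \cite{Karas4} follows exactly the same dichotomy you chose. So there is nothing to contrast; your write-up supplies a proof where the paper offers only a citation, and it matches the approach the author signals.
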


We will also need the following four results from \cite{Karas4}.

\begin{lemma}
\label{lem_degree_linear_change}(\cite{Karas4}, \textit{Lemma 2.7}) If $%
f,g\in \Bbb{C}[x_{1},\ldots ,x_{n}]$ and $L\in GL_{n}(\Bbb{C}),\,$then 
\begin{equation*}
\deg [L^{*}(f),L^{*}(g)]=\deg [f,g],
\end{equation*}
where $L^{*}(h)=h\circ L$ for any $h\in \Bbb{C}[x_{1},\ldots ,x_{n}].$
\end{lemma}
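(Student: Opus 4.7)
The plan is to unwind the definition of the Poisson bracket in terms of $2 \times 2$ minors of a Jacobian matrix and then track how a linear change of coordinates acts on those minors via the Cauchy--Binet formula. From the display in the excerpt we have
\[
\deg[f,g] \;=\; 2 + \max_{1\leq i<j\leq n}\deg J_{ij}(f,g),
\]
where $J_{ij}(f,g) = (\partial f/\partial x_i)(\partial g/\partial x_j) - (\partial f/\partial x_j)(\partial g/\partial x_i)$ is the $(i,j)$-minor of the $n\times 2$ matrix $M(f,g)$ whose columns are the gradients of $f$ and $g$. So it suffices to show that the maximum over $i<j$ of $\deg J_{ij}(L^*f,L^*g)$ equals the maximum over $k<l$ of $\deg J_{kl}(f,g)$.

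First I would apply the chain rule to $L^*f = f\circ L$. Writing $L$ as a matrix, one obtains
\[
M(L^*f,L^*g)(x) \;=\; L^{T}\cdot M(f,g)(L(x)).
\]
Taking a $2\times 2$ minor of the left-hand side on rows $i,j$ and applying the Cauchy--Binet formula yields
\[
J_{ij}(L^*f,L^*g)(x) \;=\; \sum_{k<l} c_{ij,kl}\,J_{kl}(f,g)\bigl(L(x)\bigr),
\]
where the $c_{ij,kl}$ are the $2\times 2$ minors of $L^{T}$ and hence constants in $\mathbb{C}$.

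Next I would invoke the elementary fact that for any $h\in\mathbb{C}[x_1,\ldots,x_n]$ and $L\in GL_n(\mathbb{C})$, one has $\deg(h\circ L)=\deg h$: indeed, the top homogeneous part $\overline{h}$ is nonzero, and $\overline{h}\circ L$ is again homogeneous of the same degree and nonvanishing (since $L$ is an invertible linear substitution and $\overline{h}(y)\neq 0$ for some $y$, hence for $y=L(L^{-1}y)$). Applied to each $J_{kl}(f,g)$, this gives $\deg J_{kl}(f,g)(L(x))=\deg J_{kl}(f,g)$. Combining with the Cauchy--Binet expansion above,
\[
\max_{i<j}\deg J_{ij}(L^*f,L^*g) \;\leq\; \max_{k<l}\deg J_{kl}(f,g),
\]
whence $\deg[L^*f,L^*g]\leq\deg[f,g]$. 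Applying the same inequality to $L^{-1}$ and the pair $(L^*f,L^*g)$ — and noting that $(L^{-1})^{*}L^{*}=\mathrm{id}^{*}$ — yields the reverse inequality and hence equality.

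The main obstacle is not conceptual but notational: keeping track of the correct Jacobian convention (rows versus columns, $L$ versus $L^T$) when applying the chain rule so that Cauchy--Binet gives a clean linear relation between the $J_{ij}$ and $J_{kl}\circ L$. Once that bookkeeping is done, invariance of degree under invertible linear substitution and the two-sided application of the inequality finish the argument.
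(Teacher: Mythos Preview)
Your argument is correct. The chain rule gives $M(L^*f,L^*g)(x)=L^{T}M(f,g)(Lx)$, Cauchy--Binet expresses each $J_{ij}(L^*f,L^*g)$ as a $\mathbb{C}$-linear combination of the $J_{kl}(f,g)\circ L$, and invariance of degree under an invertible linear substitution then yields $\deg[L^*f,L^*g]\le\deg[f,g]$; the reverse inequality follows by applying the same bound to $L^{-1}$.

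There is no proof in the present paper to compare against: the lemma is quoted from \cite{Karas4} and used as a black box. Your write-up therefore supplies a complete, self-contained justification that the paper itself omits. The only cosmetic point is that the ``main obstacle'' you flag (rows versus columns, $L$ versus $L^{T}$) is purely bookkeeping and does not affect the argument, since any sign or transpose discrepancy is absorbed into the constants $c_{ij,kl}$ and the symmetric use of $L$ and $L^{-1}$.
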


\begin{lemma}
\label{lem_mdeg_linear_change}(\cite{Karas4}, \textit{Lemma 3.16}) For every
mapping $F:\Bbb{C}^{n}\rightarrow \Bbb{C}^{n}$ and every $L\in GL_{n}\left( 
\Bbb{C}\right) $ we have 
\begin{equation*}
\limfunc{mdeg}\left( F\circ L\right) =\limfunc{mdeg}F.
\end{equation*}
\end{lemma}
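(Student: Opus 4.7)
The plan is to reduce the vector-valued statement to the scalar fact that $\deg(h\circ L)=\deg h$ for every $h\in\Bbb{C}[x_{1},\ldots,x_{n}]$ and every $L\in GL_{n}(\Bbb{C})$, and then apply this coordinatewise. Since by definition $F\circ L=(F_{1}\circ L,\ldots,F_{n}\circ L)$, the equality $\deg(F_{i}\circ L)=\deg F_{i}$ for each $i=1,\ldots,n$ immediately yields $\limfunc{mdeg}(F\circ L)=\limfunc{mdeg}F$, so everything is concentrated in the scalar statement.

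For the scalar claim I would first prove the easy inequality $\deg(h\circ L)\leq\deg h$. Writing $L=(L_{1},\ldots,L_{n})$ with each $L_{j}$ a linear form, any monomial $x_{1}^{\alpha_{1}}\cdots x_{n}^{\alpha_{n}}$ of degree $d=\alpha_{1}+\cdots+\alpha_{n}$ is sent by the substitution $x_{j}\mapsto L_{j}$ to a product of $d$ linear forms, hence a polynomial of degree at most $d$; summing over the finitely many monomials appearing in $h$ preserves the inequality. For the reverse inequality I would use the invertibility of $L$: applying the bound just proved with $L$ replaced by $L^{-1}\in GL_{n}(\Bbb{C})$ and $h$ replaced by $h\circ L$ gives
\[
\deg h=\deg\bigl((h\circ L)\circ L^{-1}\bigr)\leq\deg(h\circ L),
\]
so combining the two bounds yields $\deg(h\circ L)=\deg h$.

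Since this is a simple degree-bookkeeping fact for linear changes of coordinates, there is no substantive obstacle. The only subtlety worth noting is that invertibility of $L$ is used essentially: for a non-invertible linear map one can have $\deg(h\circ L)$ strictly smaller than $\deg h$, which is precisely why we pass through $L^{-1}$ rather than attempt to analyze the highest-degree part of $h\circ L$ directly. Equivalently, one could argue that $\overline{h\circ L}=\overline{h}\circ L$ because composition with a linear map sends homogeneous parts to homogeneous parts of the same degree, and $\overline{h}\circ L\neq 0$ since $L$ is invertible; but the $L^{-1}$ argument avoids even this small observation.
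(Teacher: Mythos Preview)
Your proof is correct. The paper does not actually prove this lemma: it is merely quoted from \cite{Karas4} (as Lemma~3.16 there) in the subsection ``Some useful results,'' with no argument given. Your coordinatewise reduction to the scalar identity $\deg(h\circ L)=\deg h$, proved via the obvious bound $\deg(h\circ L)\le\deg h$ together with the inverse trick $\deg h=\deg((h\circ L)\circ L^{-1})\le\deg(h\circ L)$, is the standard elementary argument and is entirely adequate here.
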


\begin{theorem}
\label{tw_reduc_type_4}(\cite{Karas4}, \textit{Theorem 3.14}) Let $\left(
d_{1},d_{2},d_{3}\right) \neq \left( 1,1,1\right) ,$ $d_{1}\leq d_{2}\leq
d_{3}$ be a sequence of positive integers. To prove that there is no tame
automorphism $F$ of $\Bbb{C}^{3}$ with $\limfunc{mdeg}F=\left(
d_{1},d_{2},d_{3}\right) $ it is enough to show that a (hypothetical)
automorphism $F$ of $\Bbb{C}^{3}$ with $\limfunc{mdeg}F=\left(
d_{1},d_{2},d_{3}\right) $ admits neither a reduction of type III nor an
elementary reduction. Moreover, if we additionally assume that $\frac{d_{3}}{%
d_{2}}=\frac{3}{2}$ or $3\nmid d_{1},\,$then it is enough to show that no
(hypothetical) automorphism of $\Bbb{C}^{3}$ with multidegree $\left(
d_{1},d_{2},d_{3}\right) $ admits an elementary reduction. In both cases we
can restrict our attention to automorphisms $F:\Bbb{C}^{3}\rightarrow \Bbb{C}%
^{3}$ such that $F\left( 0,0,0\right) =\left( 0,0,0\right) .$
\end{theorem}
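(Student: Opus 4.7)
The plan is to rely on the Shestakov--Umirbaev structure theorem for tame automorphisms of $\Bbb{C}^{3}$, in the sharpened form developed in \cite{Karas4}: any tame automorphism of $\Bbb{C}^{3}$ whose multidegree differs from $(1,1,1)$ must admit either an \emph{elementary reduction} or a reduction of one of the four classical types I, II, III, IV. Each such reduction is defined by a specific polynomial substitution aimed at lowering the degree of a single component, and each one imposes concrete arithmetic constraints on the triple $d_{1},d_{2},d_{3}$ via the Shestakov--Umirbaev estimates recorded in Theorem~\ref{tw_deg_g_fg} and Proposition~\ref{prop_deg_g_fg}. The proof starts by recalling these definitions together with the degree inequalities they force on $\limfunc{mdeg}F$.

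The bulk of the argument is then a case analysis showing that, once the multidegree is ordered $d_{1}\le d_{2}\le d_{3}$, reductions of types I, II, and IV are excluded for purely numerical reasons. For each of these types, the definition pins down a ratio, a divisibility relation, or an inequality among $d_{1},d_{2},d_{3}$; one substitutes the estimate of Proposition~\ref{prop_deg_g_fg} into the defining inequality of the reduction and verifies that no triple with $d_{1}\le d_{2}\le d_{3}$ can satisfy it. Lemmas~\ref{lem_degree_linear_change} and~\ref{lem_mdeg_linear_change} are used throughout to normalize the situation by a linear change of coordinates without disturbing the multidegree or the degree of any Poisson bracket. This rules out types I, II, IV and leaves only elementary reductions and type~III reductions as possible obstructions, which gives the first (main) assertion.

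For the second assertion I would further rule out type~III under the hypothesis $d_{3}/d_{2}=3/2$ or $3\nmid d_{1}$. A type~III reduction is controlled by a cubic relation between the two smaller components, and the associated degree computation via Proposition~\ref{prop_deg_g_fg} forces $3\mid d_{1}$ together with a specific ratio between $d_{2}$ and $d_{3}$ that is incompatible with $d_{3}/d_{2}=3/2$. Either additional hypothesis therefore collapses the arithmetic window in which a type~III reduction could live, leaving only elementary reductions to rule out. The origin-fixing assumption is handled at the end: if a hypothetical tame $F$ has multidegree $(d_{1},d_{2},d_{3})$, then $F-F(0,0,0)$ is again a tame automorphism with the same multidegree and sends the origin to itself, and all of the previous reductions were insensitive to such a translation.

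The main obstacle is the type-by-type degree analysis excluding reductions of types I, II, IV (and of type~III under the additional hypothesis). The definitions of these reductions are intricate and each one breaks into several subcases depending on which pair of components plays which role; the real technical work is to apply the Shestakov--Umirbaev inequality of Proposition~\ref{prop_deg_g_fg} carefully in every configuration and squeeze out the arithmetic contradiction with $d_{1}\le d_{2}\le d_{3}$ (respectively, with $3\nmid d_{1}$ or $d_{3}/d_{2}=3/2$). Once this bookkeeping is in place, the rest of the proof is the straightforward SU classification followed by the translation argument described above.
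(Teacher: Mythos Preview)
The paper does not prove this statement; it merely quotes it from \cite{Karas4} in the subsection ``Some useful results'' alongside other borrowed lemmas, so there is no in-paper proof to compare your proposal against.

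That said, your outline matches the actual strategy behind Theorem~3.14 of \cite{Karas4}: invoke the Shestakov--Umirbaev classification (any non-affine tame automorphism admits an elementary reduction or a reduction of type I--IV), then use the degree/ratio constraints built into the definitions of types I, II, IV to show they are incompatible with an ordered triple $d_{1}\le d_{2}\le d_{3}$, leaving only elementary reductions and type~III; the additional hypothesis $3\nmid d_{1}$ or $d_{3}/d_{2}=3/2$ then kills type~III because that reduction forces $3\mid d_{1}$ and a specific ratio $d_{3}/d_{2}$ different from $3/2$. One point to be careful about: the exclusion of types I, II, IV in \cite{Karas4} is not a single application of Proposition~\ref{prop_deg_g_fg} but a case analysis using the precise degree equalities and $2$-reducedness conditions in the Shestakov--Umirbaev definitions of those types (for instance, type~IV requires a very specific relation among the degrees that already contradicts the ordering). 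Your sketch is correct in spirit, but the ``purely numerical'' step hides the real work, which lives in \cite{Karas4} rather than in the present paper.
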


Let us recall that an automorphism $F=(F_{1},F_{2},F_{3})$ admits an \textit{%
elementary reduction} if there exists a polynomial $g\in \Bbb{C}[x,y]$ and a
permutation $\sigma $ of $\{1,2,3\}$ such that $\deg (F_{\sigma
(1)}-g(F_{\sigma (2)},F_{\sigma (3)}))<\deg F_{\sigma (1)}.$

Because the above theorem in \cite{Karas4} has number 3.14, we will refer to
it as the $\Pi $-theorem.

\begin{lemma}
\label{lem_deg_poiss_2}(\cite{Karas4}, \textit{Lemma 3.19}) Let $f,g\in \Bbb{%
C}[x_{1},\ldots ,x_{n}]$ be such that 
\begin{equation*}
f=x_{1}+f_{2}+\cdots +f_{n},\qquad g=x_{2}+g_{2}+\cdots +g_{m},
\end{equation*}
where $f_{i},g_{i}$ are homogeneous forms of degree $i.$ If $\deg \left[
f,g\right] =2,$ then $f,g\in \Bbb{C}[x_{1},x_{2}].$
\end{lemma}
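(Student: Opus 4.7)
My plan is to translate $\deg[f,g] = 2$ into a statement about the $2 \times n$ Jacobian matrix of $(f,g)$ and then exploit the very specific normalization of the linear parts of $f$ and $g$. By the formula for $\deg[f,g]$ recalled earlier, the hypothesis forces
\[
\deg\!\left(\frac{\partial f}{\partial x_i}\frac{\partial g}{\partial x_j} - \frac{\partial f}{\partial x_j}\frac{\partial g}{\partial x_i}\right) \leq 0
\]
for every $1 \leq i < j \leq n$, so every $2 \times 2$ minor of the Jacobian of $(f,g)$ is a constant in $\Bbb{C}$. Such a constant is determined by its value at a single point.

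Next I would evaluate these minors at the origin. Since every $f_i$ and $g_i$ with $i \geq 2$ is homogeneous of degree at least $2$ and so has vanishing first partials at $0$, one reads off $(\partial f/\partial x_k)(0) = \delta_{k,1}$ and $(\partial g/\partial x_k)(0) = \delta_{k,2}$. Hence the $(1,2)$-minor equals $1$ at $0$, while any minor involving an index $k \geq 3$ vanishes at $0$; by constancy these equalities hold identically, yielding
\[
\frac{\partial f}{\partial x_1}\frac{\partial g}{\partial x_2} - \frac{\partial f}{\partial x_2}\frac{\partial g}{\partial x_1} = 1,
\]
and, for each $j \geq 3$ and each $i \in \{1,2\}$,
\[
\frac{\partial f}{\partial x_i}\frac{\partial g}{\partial x_j} - \frac{\partial f}{\partial x_j}\frac{\partial g}{\partial x_i} = 0.
\]

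For fixed $j \geq 3$, the two relations with $i=1$ and $i=2$ form a homogeneous linear system in the unknowns $\partial f/\partial x_j$ and $\partial g/\partial x_j$ over $\Bbb{C}[x_1,\ldots,x_n]$, whose coefficient matrix has determinant equal to the $(1,2)$-minor, namely $1$. Since $1$ is a unit in the ring, the only solution is the zero one, so $\partial f/\partial x_j = \partial g/\partial x_j = 0$ for every $j \geq 3$, i.e.\ $f, g \in \Bbb{C}[x_1,x_2]$. I do not foresee any genuine obstacle; the only step that needs to be stated with care is the passage from vanishing at $0$ to global vanishing, which is free here because the polynomials in question are a priori constants.
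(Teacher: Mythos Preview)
Your argument is correct. The key observation---that $\deg[f,g]=2$ forces every $2\times 2$ minor of the Jacobian of $(f,g)$ to be a constant, and that evaluating at the origin then pins down these constants thanks to the normalized linear parts---is exactly the right idea, and the linear-algebra step over the polynomial ring (using that the $(1,2)$-minor is the unit $1$) is clean and valid.

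Note, however, that the present paper does not actually contain a proof of this lemma: it is quoted verbatim from \cite{Karas4} (Lemma~3.19 there) and used as a black box in the proof of Theorem~\ref{tw_456_implies_rown}. So there is no ``paper's own proof'' to compare against here. Your self-contained argument is a perfectly good substitute; if anything, you have proved the marginally stronger statement with hypothesis $\deg[f,g]\le 2$ (and your computation of the $(1,2)$-minor shows a posteriori that under the given normalization one always has $\deg[f,g]\ge 2$, so the two hypotheses coincide).
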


The last result that we recall here is the following one due to Moh \cite
{Moh}.

\begin{theorem}
\textit{(see also \cite{van den Essen})} \label{tw_JC_dim2_Moh}Let $%
F:k^{2}\rightarrow k^{2}$ be a Keller map with $\deg F\leq 101.$ Then $F$ is
invertible.
\end{theorem}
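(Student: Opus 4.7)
The plan is to argue by contradiction: suppose $F=(f,g):k^{2}\to k^{2}$ is a Keller map with $\max(\deg f,\deg g)\leq 101$ that is not invertible. By the Jung--van der Kulk theorem every polynomial automorphism of $k^{2}$ is tame, so non-invertibility is equivalent to the non-existence of any \emph{elementary reduction}: a polynomial $h\in k[t]$ with $\deg(f-h(g))<\deg f$, assuming WLOG $\deg f\leq \deg g$ (or the symmetric statement after swapping $f$ and $g$). Thus the working hypothesis becomes: $J(f,g)\in k^{*}$, no such $h$ exists, and $\deg g\leq 101$. A short degree count shows that the Keller identity forces $\bar f$ and $\bar g$ (the leading homogeneous parts) to be algebraically dependent, since otherwise $\deg J(f,g)=\deg f+\deg g-2>0$; and the absence of an elementary reduction forces $\bar g\notin k[\bar f]$ in the strict-inequality case $\deg f<\deg g$ (the case $\deg f=\deg g$ is disposed of by an initial linear change of coordinates). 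Hence $(f,g)$ is $\ast$-reduced in the sense of Definition~\ref{def_*-red}, and $p$-reduced for $p=\deg f/\gcd(\deg f,\deg g)$.

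In the degree convention of the excerpt the Keller condition reads $\deg[f,g]=2$, and Proposition~\ref{prop_deg_g_fg} then provides the sharp lower bound $\deg G(f,g)\geq q(p\deg g-\deg g-\deg f+2)+r\deg g$ for every $G\in k[x,y]$ of $y$-degree $pq+r$. The core of the argument, following Moh, is a Puiseux-series analysis at infinity: expand $g$ as a formal Puiseux series in $f^{1/p}$ near infinity and track its successive \emph{characteristic pairs}. At each stage the Keller identity determines the leading coefficient of the next term through a non-degenerate linear recursion, so the expansion cannot terminate prematurely, while the absence of an elementary reduction forces each new characteristic pair to be strictly finer than the previous one. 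Iterating yields an explicit lower bound on $\deg f+\deg g$ as a function of the length of the characteristic sequence and the denominators involved; Moh's calibration of this bound produces $\deg f+\deg g\geq 105$, and in particular $\max(\deg f,\deg g)\geq 102$, contradicting our hypothesis.

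The main obstacle is the arithmetic bookkeeping in this last step. One must enumerate, for every admissible pair $(\deg f,\deg g)$ with $\max\leq 101$ and $\gcd>1$, the possible chains of characteristic pairs compatible simultaneously with $J(f,g)\in k^{*}$ and with the non-existence of an elementary reduction, and verify that no such chain terminates below degree $102$. This finite but delicate case analysis is exactly what makes the constant $101$ sharp for this method, and it is the reason the proof does not reduce to a single clean application of Proposition~\ref{prop_deg_g_fg}: the combinatorics of the Newton polygon at infinity, and in particular the way each Keller-forced leading coefficient interacts with the next admissible characteristic exponent, cannot be bypassed.
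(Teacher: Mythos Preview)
The paper does not prove Theorem~\ref{tw_JC_dim2_Moh} at all: it is quoted as an external result of Moh~\cite{Moh} (with a pointer to van den Essen~\cite{van den Essen}) and used as a black box in the proof of Theorem~\ref{tw_456_implies_rown}. There is therefore nothing in the paper to compare your argument against.

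As for the proposal itself, what you have written is a reasonable high-level outline of Moh's strategy (reduce to a minimal Jacobian pair with no elementary reduction, observe that the leading forms are algebraically dependent so the pair is $\ast$-reduced, then run a Newton--Puiseux expansion at infinity and track characteristic pairs), but it is not a proof. You explicitly defer the entire content of the theorem to ``arithmetic bookkeeping'' and a ``finite but delicate case analysis'' that you do not perform; that case analysis \emph{is} Moh's theorem. The specific numerical claim you invoke, that any Jacobian pair must satisfy $\deg f+\deg g\geq 105$, is not something you establish, and it is not a consequence of Proposition~\ref{prop_deg_g_fg} alone. A minor logical slip: Jung--van der Kulk tells you that automorphisms are tame, not that non-invertible Keller maps admit no elementary reduction; the correct reduction is to pass to a counterexample of minimal degree, for which any elementary reduction would produce either a smaller counterexample or an invertible map, both impossible.
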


\subsection{The proofs}

\begin{proof}
(of Theorem \ref{tw_456_implies}) By the $\Pi $-theorem (Theorem \ref
{tw_reduc_type_4}) it is enough to show that a hypothetical automorphism $F$
of $\Bbb{C}^{3}$ with $\limfunc{mdeg}F=\left( 4,5,6\right) $ does not admit
an elementary reduction. Moreover, it is enough to show this for
automorphisms $F:\Bbb{C}^{3}\rightarrow \Bbb{C}^{3}$ such that $F\left(
0,0,0\right) =\left( 0,0,0\right) .$

So let us assume that there is an automorphism $F=\left(
F_{1},F_{2},F_{3}\right) :\Bbb{C}^{3}\rightarrow \Bbb{C}^{3}$ with $\limfunc{%
mdeg}F=\left( 4,5,6\right) $ such that $F$ admits an elementary reduction of
the form $\left( F_{1},F_{2},F_{3}-g\left( F_{1},F_{2}\right) \right) ,$
where $g\in \Bbb{C}\left[ x,y\right] .$ Then 
\begin{equation}
\deg g\left( F_{1},F_{2}\right) =\deg F_{3}=6,  \label{row_tw_569_1}
\end{equation}
and by Proposition \ref{prop_deg_g_fg}, 
\begin{equation}
\deg g\left( F_{1},F_{2}\right) \geq q\left( 4\cdot 5-5-4+\deg \left[
F_{1},F_{2}\right] \right) +5r,  \label{row_tw_569_2}
\end{equation}
where $\deg _{y}g\left( x,y\right) =4q+r,$ with $0\leq r<4.$ Since $4\cdot
5-5-4+\deg \left[ F_{1},F_{2}\right] \geq 11+\deg \left[ F_{1},F_{2}\right]
>6,$ by (\ref{row_tw_569_1}) and (\ref{row_tw_569_2}) we have $q=0.$ Also by
(\ref{row_tw_569_1}) and (\ref{row_tw_569_2}) we have $r<2.$ Thus $g\left(
x,y\right) =g_{0}\left( x\right) +yg_{0}\left( x\right) .$ And, since $4\Bbb{%
N}\cap \left( 5+4\Bbb{N}\right) =\emptyset ,$ it follows that 
\begin{equation*}
6=\deg g\left( F_{1},F_{2}\right) \in 4\Bbb{N}\cup \left( 5+4\Bbb{N}\right) ,
\end{equation*}
a contradiction.

Now, assume that $F=\left( F_{1},F_{2},F_{3}\right) :\Bbb{C}^{3}\rightarrow 
\Bbb{C}^{3}$ is an automorphism such that $\limfunc{mdeg}F=\left(
4,5,6\right) $ and $F$ admits an elementary reduction of the form $\left(
F_{1}-g\left( F_{2},F_{3}\right) ,\allowbreak F_{2},F_{3}\right) ,$ where $%
g\in \Bbb{C}\left[ x,y\right] .$ Then 
\begin{equation}
\deg g\left( F_{2},F_{3}\right) =\deg F_{1}=4,  \label{row_tw_569_3}
\end{equation}
and by Proposition \ref{prop_deg_g_fg}, 
\begin{equation}
\deg g\left( F_{2},F_{3}\right) \geq q\left( 5\cdot 6-6-5+\deg \left[
F_{1},F_{3}\right] \right) +6r,  \label{row_tw_569_4}
\end{equation}
where $\deg _{y}g\left( x,y\right) =5q+r,$ with $0\leq r<5.$ Since $5\cdot
6-6-5+\deg \left[ F_{1},F_{3}\right] \geq 29+\deg \left[ F_{1},F_{3}\right]
>4,$ we have $q=r=0.$ This means that $g\left( x,y\right) =g\left( x\right)
, $ and so 
\begin{equation*}
4=\deg g\left( F_{2},F_{3}\right) =\deg g\left( F_{2}\right) \in 5\Bbb{N},
\end{equation*}
a contradiction.

Finally, assume that $F=\left( F_{1},F_{2},F_{3}\right) $ is an automorphism
of $\Bbb{C}^{3}$ such that $\limfunc{mdeg}F=\left( 4,5,6\right) $ and $F$
admits an elementary reduction of the form $\left( F_{1},F_{2}-g\left(
F_{1},F_{3}\right) ,F_{3}\right) ,$ where $g\in \Bbb{C}\left[ x,y\right] .$
By Theorem \ref{tw_reduc_type_4} we can also assume that $F\left(
0,0,0\right) =\left( 0,0,0\right) .$ We have 
\begin{equation}
\deg g\left( F_{1},F_{3}\right) =\deg F_{2}=5,  \label{row_tw_569_5}
\end{equation}
and by Proposition \ref{prop_deg_g_fg}, 
\begin{equation}
\deg g\left( F_{1},F_{3}\right) \geq q\left( p\cdot 6-6-4+\deg \left[
F_{2},F_{3}\right] \right) +6r,  \label{row_tw_569_6}
\end{equation}
where $\deg _{y}g\left( x,y\right) =qp+r,$ with $0\leq r<p$ and $p=\frac{4}{%
\gcd \left( 4,6\right) }=2.$ By (\ref{row_tw_569_5}) and (\ref{row_tw_569_6}%
) we see that $r=0.$

Consider the case $\deg \left[ F_{1},F_{3}\right] >3.$ Then $p\cdot
6-6-4+\deg \left[ F_{2},F_{3}\right] =2+\deg \left[ F_{2},F_{3}\right] >5,$
and by (\ref{row_tw_569_5}) and (\ref{row_tw_569_6}) we see that $q=0.$ Thus
in this case, we have $g\left( x,y\right) =g\left( x\right) ,$ and so $\deg
g\left( F_{1},F_{3}\right) =\deg g\left( F_{1}\right) \in 4\Bbb{N}.$ This
contradicts (\ref{row_tw_569_5}). Thus, $\deg \left[ F_{1},F_{3}\right] \leq
3.$

Let $L$ be the linear part of the automorphism $F.$ Since $F\left(
0,0,0\right) =\left( 0,0,0\right) ,$ the linear part of $F\circ L^{-1}$ is
the identity map $\limfunc{id}_{\Bbb{C}^{3}}.$ Thus 
\begin{eqnarray}
F_{1}\circ L^{-1} &=&x+\text{higher degree summands,}  \label{row_tw_569_7}
\\
F_{3}\circ L^{-1} &=&z+\text{higher degree summands.}  \notag
\end{eqnarray}
By Lemma \ref{lem_degree_linear_change}, we have 
\begin{equation*}
\deg \left[ F_{1}\circ L^{-1},F_{3}\circ L^{-1}\right] =\deg \left[
F_{1},F_{3}\right] \leq 3,
\end{equation*}
and by Lemma \ref{lem_mdeg_linear_change} we have $\deg \left( F_{1}\circ
L^{-1}\right) =4,\deg \left( F_{3}\circ L^{-1}\right) =6.$ Thus we can take $%
P=F_{1}\circ L^{-1}$ and $Q=F_{3}\circ L^{-1}.$
\end{proof}

\begin{proof}
(of Theorem \ref{tw_456_implies_rown}) Assume that there is a tame
automorphism of $\Bbb{C}^{3}$ with multidegree $\left( 4,5,6\right) .$ By
Theorem \ref{tw_456_implies} there exists a pair of polynomials $P,Q\in \Bbb{%
C}\left[ x,y,z\right] $ such that 
\begin{eqnarray*}
P &=&x+P_{2}+P_{3}+P_{4},\qquad P_{4}\neq 0, \\
Q &=&z+Q_{2}+\cdots +Q_{6},\qquad Q_{6}\neq 0,
\end{eqnarray*}
and 
\begin{equation*}
\deg \left[ P,Q\right] \leq 3,
\end{equation*}
where $P_{i},Q_{i}$ are homogeneous polynomials of degree $i.$

Since $P$ and $Q$ are algebraically independent (over $\Bbb{C}$), we have $%
\deg \left[ P,Q\right] \geq 2.$ Assume that $\deg \left[ P,Q\right] =2.$
Then, by Lemma \ref{lem_deg_poiss_2}, we have 
\begin{equation*}
P,Q\in \Bbb{C}\left[ x,z\right] .
\end{equation*}
But $\deg \left[ P,Q\right] =2$ means that 
\begin{equation*}
Jac\left( P,Q\right) \in \Bbb{C}^{*}
\end{equation*}
(of course we consider here $P,Q$ as functions of two variables $x,z$). Then
by Theorem \ref{tw_JC_dim2_Moh} the map $\left( P,Q\right) :\Bbb{C}%
^{2}\rightarrow \Bbb{C}^{2}$ is an automorphism. But $4\nmid 6$
contradicting the Jung - van der Kulk theorem (see e.g. \cite{Jung}, \cite
{Kulk} or \cite{van den Essen}). This shows that $\deg \left[ P,Q\right] =3.$
\end{proof}

\section{$H$-reduction method}

In this section we develop the main tool that will be used in the next
sections of the paper. We start with the following lemma in which we use the
notation 
\begin{equation*}
\Bbb{C}[x_{1},\ldots ,x_{n}]_{d}=\left\{ f\in \Bbb{C}[x_{1},\ldots ,x_{n}]:f%
\text{ is homogeneous od degree }d\right\} \cup \{0\}.
\end{equation*}

\begin{lemma}
\label{Lemma_H_reduction}Let $H$ be a squarefree, nonconstant homogeneous
polynomial and let $P$ be any homogeneous polynomial such that 
\begin{equation*}
\left[ H,P\right] =0.
\end{equation*}
Then there exist $a\in \Bbb{C}$ and $k\in \Bbb{N}$ such that 
\begin{equation*}
P=aH^{k}.
\end{equation*}
Moreover, if $P\in \Bbb{C}[x_{1},\ldots ,x_{n}]_{d}$ and $\deg H\nmid d,$
then $P=0.$
\end{lemma}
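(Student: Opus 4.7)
The plan is to reduce the problem to a clean algebraic identity $P^{h}=cH^{d}$, where $h=\deg H$ and $d=\deg P$, and then to read the conclusion off from unique factorization in $\Bbb{C}[x_{1},\ldots,x_{n}]$. First I would unpack the hypothesis $[H,P]=0$ into the system
\[
\frac{\partial H}{\partial x_{i}}\frac{\partial P}{\partial x_{j}}=\frac{\partial H}{\partial x_{j}}\frac{\partial P}{\partial x_{i}},\qquad 1\le i,j\le n.
\]
Multiplying the $(i,j)$-equation by $x_{i}$, summing over $i$, and applying Euler's identity to the homogeneous polynomials $H$ and $P$ would then give
\[
hH\,\frac{\partial P}{\partial x_{j}}=dP\,\frac{\partial H}{\partial x_{j}}\qquad\text{for every }j=1,\dots,n.
\]

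The second step is to recognize these equations as saying that the rational function $P^{h}/H^{d}$ has vanishing gradient, since
\[
\frac{\partial}{\partial x_{j}}\left(\frac{P^{h}}{H^{d}}\right)=\frac{P^{h-1}}{H^{d+1}}\left(hH\,\partial_{j}P-dP\,\partial_{j}H\right)=0.
\]
Working in characteristic $0$, this forces $P^{h}/H^{d}$ to equal a constant $c\in\Bbb{C}$, and clearing denominators yields the identity $P^{h}=cH^{d}$ in $\Bbb{C}[x_{1},\ldots,x_{n}]$.

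The third step is unique factorization. If $c=0$ then $P=0$ and we are done with $P=0\cdot H^{0}$. Otherwise, let $H=H_{1}\cdots H_{r}$ be the squarefree factorization into pairwise distinct homogeneous irreducibles. Then $P^{h}=c\prod_{i}H_{i}^{d}$, so each $H_{i}$-adic valuation of $P$ satisfies $h\cdot v_{H_{i}}(P)=d$, which forces $h\mid d$. Setting $k=d/h$, we obtain $P=uH^{k}$ for some $u\in\Bbb{C}[x_{1},\ldots,x_{n}]$ coprime to every $H_{i}$; then $u^{h}=c\in\Bbb{C}^{*}$, and since the only elements of a polynomial ring whose $h$-th power is a nonzero constant are themselves constants, $u=a\in\Bbb{C}^{*}$. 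Hence $P=aH^{k}$. The moreover part is immediate: since the valuation computation shows that $\deg H\nmid d$ is incompatible with $c\neq 0$, in that case we must have $P=0$.

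The only nontrivial point is Step 2 --- confirming that $\nabla(P^{h}/H^{d})=0$ genuinely implies $P^{h}/H^{d}\in\Bbb{C}$; once that is in hand, the remainder is bookkeeping with the factorization of $H$, and the squarefree hypothesis enters exactly where we need the $H_{i}$ to be \emph{distinct} so that unique factorization pins down $P$ up to scalar.
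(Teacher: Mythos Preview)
Your argument is correct and takes a genuinely different route from the paper's. The paper observes that $[H,P]=0$ forces $H$ and $P$ to be algebraically dependent, and then invokes an external structural result (a lemma of Shestakov--Umirbaev type) stating that two algebraically dependent homogeneous polynomials are each scalar multiples of powers of a single homogeneous polynomial $h$; squarefreeness of $H$ then forces the exponent on $H$ to be $1$, so one may take $h=H$. You instead bypass this citation: from the vanishing of all $2\times 2$ Jacobian minors you extract, via Euler's identity, the relation $hH\,\partial_j P = dP\,\partial_j H$, recognize it as $\nabla(P^{h}/H^{d})=0$, and conclude $P^{h}=cH^{d}$; unique factorization together with the distinctness of the irreducible factors of $H$ then pins down $P=aH^{k}$. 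Your approach is fully self-contained and makes explicit exactly where squarefreeness is used (equal $H_i$-adic valuations force $h\mid d$), whereas the paper's version is shorter on the page but delegates the substance to the cited lemma.
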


\begin{proof}
Since $\left[ H,P\right] =0,$ it follows that $H$ and $P$ are algebraically
dependent and so by Lemma 2 in \cite{Umirbaev Yu} there exist $a,b\in \Bbb{C}%
,$ $k_{1},k_{2}\in \Bbb{N}$ and a homogeneous polynomial $h$ such that 
\begin{equation*}
P=ah^{k_{1}}\qquad \text{and\qquad }H=bh^{k_{2}}.
\end{equation*}
Since $H$ is squarefree, we conclude that $k_{2}=1$ and so we can take $h=H.$
Thus, in particular, if $a\neq 0$ (i.e. $P\neq 0$), then $\deg P$ is
divisible by $\deg H.$
\end{proof}

\begin{corollary}
\label{Corr_H_reduction}Let $H$ be a squarefree, nonconstant homogeneous
polynomial and let $P$ be any polynomial such that 
\begin{equation*}
\left[ H,P\right] =0.
\end{equation*}
Then $P\in \Bbb{C}[H].$
\end{corollary}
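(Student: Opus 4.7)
The plan is to reduce the corollary to the lemma by decomposing $P$ into its homogeneous components and showing that each component Poisson-commutes with $H$ separately.

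First I would write $P = \sum_{d \geq 0} P_d$, where each $P_d \in \mathbb{C}[x_1,\ldots,x_n]_d$. Recall that $[H,P] = 0$ means, coefficient by coefficient on the formal basis $\{[x_i,x_j]\}_{i<j}$, that
\begin{equation*}
\frac{\partial H}{\partial x_i}\frac{\partial P}{\partial x_j} - \frac{\partial H}{\partial x_j}\frac{\partial P}{\partial x_i} = 0 \qquad \text{for all } i<j.
\end{equation*}
Writing $h = \deg H$, each summand
\begin{equation*}
\frac{\partial H}{\partial x_i}\frac{\partial P_d}{\partial x_j} - \frac{\partial H}{\partial x_j}\frac{\partial P_d}{\partial x_i}
\end{equation*}
is homogeneous of degree $h+d-2$, since $\partial H/\partial x_i$ is homogeneous of degree $h-1$ and $\partial P_d/\partial x_k$ is homogeneous of degree $d-1$ (or zero). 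Hence the whole expression $\partial_i H \cdot \partial_j P - \partial_j H \cdot \partial_i P = 0$ breaks into a sum of homogeneous parts of pairwise distinct degrees, and each such part must vanish. This gives $[H,P_d] = 0$ for every $d$.

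Now I would apply Lemma \ref{Lemma_H_reduction} to each $P_d$. Because $H$ is squarefree, nonconstant, and homogeneous, the lemma yields that either $P_d = 0$, or $P_d = a_d H^{k_d}$ for some $a_d \in \mathbb{C}$ and $k_d \in \mathbb{N}$; in the latter case $d = k_d \cdot h$, so $k_d = d/h$. Therefore
\begin{equation*}
P = \sum_{d \in h \mathbb{N}} a_d H^{d/h} \in \mathbb{C}[H],
\end{equation*}
which is the desired conclusion.

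The only nontrivial step is the decomposition argument, and the key observation that makes it work is that the Poisson bracket of two homogeneous polynomials (viewed as a formal combination of $[x_i,x_j]$'s) has a well-defined total degree, so that the equation $[H,P] = 0$ splits along the homogeneous components of $P$. Once this is in place, the lemma does all the heavy lifting and no further obstacles remain.
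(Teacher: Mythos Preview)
Your proof is correct and follows essentially the same approach as the paper: decompose $P$ into homogeneous components, use that $[H,\cdot]$ preserves degree to deduce $[H,P_d]=0$ for each $d$, and then apply Lemma~\ref{Lemma_H_reduction} componentwise. The paper states the splitting $[H,P_i]=0$ without justification, whereas you spell out the degree-counting argument; otherwise the two arguments are identical.
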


\begin{proof}
Let $d=\deg P$ and let $P=P_{0}+\cdots +P_{d}$ be the homogeneous
decomposition of $P.$ Since $\left[ H,P\right] =0,$ it follows that 
\begin{equation}
\left[ H,P_{i}\right] =0\qquad \text{for }i=0,\ldots ,d.
\label{Row_H_reduction_1}
\end{equation}
In particular, $\left[ H,P_{d}\right] =0.$ Since $P_{d}\neq 0$ (by
definition of $d$), it follows that $d=k\deg H$ for some $k\in \Bbb{N}.$ By (%
\ref{Row_H_reduction_1}) and Lemma \ref{Lemma_H_reduction} there exist $%
a_{0},\ldots ,a_{k}\in \Bbb{C},$ $a_{k}\neq 0$ such that 
\begin{equation*}
P_{l\deg H}=a_{l}H^{l}\qquad \text{for }l=0,\ldots ,k
\end{equation*}
and $P_{i}=0$ for $i\notin \{0,\deg H,2\deg H,\ldots ,k\deg H\}.$
\end{proof}

We will also use the following fact that is easy to check.

\begin{lemma}
Let $P\in \Bbb{C}[x_{1},\ldots ,x_{n}].$ For any $Q,R\in \Bbb{C}%
[x_{1},\ldots ,x_{n}]$ and $\alpha ,\beta \in \Bbb{C}$ we have 
\begin{eqnarray*}
\left[ P,QR\right] &=&Q\left[ P,R\right] +R\left[ P,Q\right] , \\
\left[ P,\alpha Q+\beta R\right] &=&\alpha \left[ P,Q\right] +\beta \left[
P,R\right] , \\
\left[ P,Q\right] &=&-\left[ Q,P\right] .
\end{eqnarray*}
In other words, the mappings $Q\mapsto \left[ P,Q\right] $ and $Q\mapsto
\left[ Q,P\right] $ are $\Bbb{C}$-derivations.
\end{lemma}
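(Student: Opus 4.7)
The plan is to verify each of the three identities directly from the defining formula
\[
[f,g] \;=\; \sum_{1\leq i<j\leq n}\left(\frac{\partial f}{\partial x_i}\frac{\partial g}{\partial x_j}-\frac{\partial f}{\partial x_j}\frac{\partial g}{\partial x_i}\right)[x_i,x_j],
\]
using only the linearity and Leibniz rule for $\partial/\partial x_k$ together with the antisymmetry relation $[x_i,x_j]=-[x_j,x_i]$. Since the $[x_i,x_j]$ are purely formal symbols, every identity amounts to showing that the coefficient polynomials on both sides are equal for each unordered pair $\{i,j\}$, so the problem reduces to $\binom{n}{2}$ independent two-variable calculations.

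For the antisymmetry $[P,Q]=-[Q,P]$, I would simply swap the roles of $f$ and $g$ in the summand: the coefficient becomes $\frac{\partial Q}{\partial x_i}\frac{\partial P}{\partial x_j}-\frac{\partial Q}{\partial x_j}\frac{\partial P}{\partial x_i}$, which is the negative of the original coefficient. For bilinearity $[P,\alpha Q+\beta R]=\alpha[P,Q]+\beta[P,R]$, I would invoke the linearity of each $\partial/\partial x_j$ applied to $\alpha Q+\beta R$, then factor the scalars out of each coefficient; the same derivation property in the other slot would also give linearity in the first argument (though this is not requested). For the Leibniz identity $[P,QR]=Q[P,R]+R[P,Q]$, I would expand $\frac{\partial (QR)}{\partial x_j}=Q\frac{\partial R}{\partial x_j}+R\frac{\partial Q}{\partial x_j}$ in each coefficient, regroup the four resulting products into the two groups
\[
Q\left(\frac{\partial P}{\partial x_i}\frac{\partial R}{\partial x_j}-\frac{\partial P}{\partial x_j}\frac{\partial R}{\partial x_i}\right) + R\left(\frac{\partial P}{\partial x_i}\frac{\partial Q}{\partial x_j}-\frac{\partial P}{\partial x_j}\frac{\partial Q}{\partial x_i}\right),
\]
and recognize the two summands as the $\{i,j\}$-coefficients of $Q[P,R]$ and $R[P,Q]$.

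There is no real obstacle here; the lemma is a formal restatement of the standard derivation properties of the Poisson bracket, and the only care needed is to treat the formal brackets $[x_i,x_j]$ as a basis of a free module over $\mathbb{C}[x_1,\ldots,x_n]$, so that comparing coefficient-by-coefficient is legitimate. The final sentence of the lemma, which summarizes the content as "$Q\mapsto[P,Q]$ and $Q\mapsto[Q,P]$ are $\mathbb{C}$-derivations", then follows at once from the combination of bilinearity, the Leibniz rule, and antisymmetry.
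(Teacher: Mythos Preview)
Your proposal is correct and is precisely the direct verification the paper has in mind: the paper does not give a proof at all, introducing the lemma only as ``the following fact that is easy to check.'' Your coefficient-by-coefficient argument using linearity, antisymmetry, and the Leibniz rule for partial derivatives is exactly the routine computation that phrase is pointing to.
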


\section{Non-existence of a special pair of polynomials - preliminary lemma}

In this and the next sections our goal is to prove the following theorem.

\begin{theorem}
\label{tw_PQ_deg3}There is no pair of polynomials $F,G\in \Bbb{C}\left[
x,y,z\right] $ such that 
\begin{eqnarray*}
F &=&x+F_{2}+F_{3}+F_{4},\quad F_{4}\neq 0, \\
G &=&z+G_{2}+\cdots +G_{6},\quad G_{6}\neq 0,
\end{eqnarray*}
and 
\begin{equation*}
\deg \left[ F,G\right] \leq 3,
\end{equation*}
where $F_{i},G_{i}$ are homogeneous polynomials of degree $i.$
\end{theorem}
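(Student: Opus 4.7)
The plan is to argue by contradiction. Suppose such $F, G$ exist, and decompose $[F, G] = \sum_{i,j}[F_i, G_j]$ into its homogeneous components, with the convention $F_1 = x$ and $G_1 = z$. Since each $[F_i,G_j]$ is either zero or homogeneous of degree $i+j$, the hypothesis $\deg[F, G] \le 3$ is equivalent to the cascade of vanishing identities
\[
\sum_{i+j=d}[F_i, G_j] = 0 \qquad (d = 4, 5, \ldots, 10).
\]
I would read this cascade from the top down, using the $H$-reduction machinery of Section~3.

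The degree-$10$ equation $[F_4, G_6] = 0$ together with Lemma~2 of Umirbaev--Yu (the algebraic-dependence result invoked inside Lemma~\ref{Lemma_H_reduction}) implies that $F_4$ and $G_6$ are both powers of a common squarefree homogeneous polynomial $H$. Since $\deg H$ must divide $\gcd(4,6) = 2$, there are exactly two cases: either $H$ is linear and $F_4 = aH^4$, $G_6 = bH^6$, or $H$ is squarefree of degree $2$ and $F_4 = aH^2$, $G_6 = bH^3$. In either case $[H, F_4] = [H, G_6] = 0$, which makes $H$ the correct vehicle for the $H$-reduction.

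Now the $H$-reduction method propagates down the cascade. For instance, at degree~$9$, the equation $[F_3, G_6] + [F_4, G_5] = 0$ combined with the Leibniz rule $[H, H^n P] = H^n[H, P]$ gives, in the quadratic case, the identity $\bigl[H, \, G_5 - \frac{3b}{2a} H F_3\bigr] = 0$; Corollary~\ref{Corr_H_reduction} then forces $G_5 - \frac{3b}{2a}HF_3 \in \Bbb{C}[H]$, and since elements of $\Bbb{C}[H]$ have even degree when $\deg H = 2$, this difference must vanish and $G_5 = \frac{3b}{2a}HF_3$. The analogous computation in the linear case yields $G_5 = \frac{3b}{2a}H^2 F_3 + cH^5$ for some $c \in \Bbb{C}$. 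Iterating this $H$-reduction step at the degree-$8, 7, 6, 5$ equations should pin down each remaining $G_k$, and strongly constrain $F_2, F_3$, as explicit polynomials in $H$ and a shrinking pool of residual data; this structural output is presumably the content of the preliminary lemma advertised in the section title.

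The main obstacle, which I expect to be confronted in the next section, is exploiting the lowest-degree equation
\[
[x, G_3] + [F_2, G_2] + [F_3, z] = 0
\]
together with the normalizations $F_1 = x$, $G_1 = z$ to derive a contradiction with the rigid structural formulas produced by the cascade. The delicate case is the quadratic one, where $H$ has degree two and the residual freedom in $F_2, F_3$ is largest; I would expect the final contradiction to manifest either as a forced vanishing of $F_3$ that cascades back up to force $F_4 = 0$, or as an incompatibility between the fixed linear terms $x, z$ and the constraint $F_4 = aH^2$, $G_6 = bH^3$. The careful bookkeeping across the middle-degree reductions and this final low-degree compatibility check are, by the paper's own account, the most technical step of the proof.
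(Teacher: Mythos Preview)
Your outline is correct and matches the paper's approach exactly: the cascade of homogeneous vanishing conditions read top-down via $H$-reduction, the dichotomy $\deg H \in \{1,2\}$ from the degree-$10$ equation, and the final contradiction extracted from the degree-$4$ equation against the fixed linear parts $x,z$.

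One correction to your expectations: the linear case ($\deg H = 1$) is the longer and more delicate one, not the quadratic case. When $\deg H = 2$, Lemma~\ref{Lemma_H_reduction} kills odd-degree residues outright (as you noted at degree $9$), so the cascade stays clean. When $\deg H = 1$, every application of Corollary~\ref{Corr_H_reduction} leaves a fresh free constant, so parameters accumulate ($\beta$ at degree $9$, then $a,c,d,\ldots$), and the paper is forced into subcase splits---first on $\beta = 0$ versus $\beta \neq 0$ at degree $6$, then a further bifurcation at degree $5$---before the degree-$4$ contradiction can be reached in each branch. The contradiction itself is of the second type you anticipated (incompatibility of the rigid formulas with the normalization $F_1 = x$, $G_1 = z$), not a forced vanishing of $F_4$.
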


First of all let us notice that the above theorem and Theorem \ref
{tw_456_implies} give Theorem \ref{tw_main_4910}.

Until the end of the paper we assume that 
\begin{equation*}
F=x+F_{2}+F_{3}+F_{4},\quad F_{4}\neq 0,
\end{equation*}
and 
\begin{equation*}
G=z+G_{2}+\cdots +G_{6},\quad G_{6}\neq 0.
\end{equation*}

The main idea in proving Theorem \ref{tw_PQ_deg3} is to use $H$-reduction to
show that smaller $\deg \left[ F,G\right] $ gives a closer relation between $%
F$ and $G.$ In other words, smaller $\deg \left[ F,G\right] $ implies
smaller flexibility in choosing $F$ and $G.$ And finally, small enough $\deg
\left[ F,G\right] $ implies that there is no space for $F$ and $G.$ The
first step is the following lemma.

\begin{lemma}
\label{Lem_deg_10}If $\deg \left[ F,G\right] <10,$ then either\newline
(1) there is a squarefree homogeneous polynomial $H$ of degree $2$ and $%
\alpha \in \Bbb{C}^{*}$ such that 
\begin{equation*}
F_{4}=H^{2},\qquad G_{6}=\alpha H^{3},
\end{equation*}
\newline
or\newline
(2) there is a homogeneous polynomial $h$ of degree $1$ and $\alpha \in \Bbb{%
C}^{*}$ such that 
\begin{equation*}
F_{4}=h^{4},\qquad G_{6}=\alpha h^{6}.
\end{equation*}
\end{lemma}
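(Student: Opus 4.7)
The plan is to extract the top-degree content of the assumption $\deg[F,G]<10$ and then feed it into the classification of algebraically dependent homogeneous polynomials that is already invoked in the proof of Lemma~\ref{Lemma_H_reduction}.

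First I would observe that since $F=x+F_{2}+F_{3}+F_{4}$ and $G=z+G_{2}+\cdots+G_{6}$, every $2\times 2$ Jacobian minor of $(F,G)$ has its highest-degree homogeneous component (of degree $8$) equal to the corresponding minor of $(F_{4},G_{6})$. Multiplying by the formal symbols $[x_{i},x_{j}]$, whose degree is $2$, the degree-$10$ component of $[F,G]$ is exactly $[F_{4},G_{6}]$. Thus $\deg[F,G]<10$ forces $[F_{4},G_{6}]=0$, and in particular $F_{4}$ and $G_{6}$ are algebraically dependent over $\Bbb{C}$.

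Next I would invoke Lemma~2 of Umirbaev--Yu (used in the proof of Lemma~\ref{Lemma_H_reduction}) to produce a homogeneous polynomial $h$, constants $a,b\in\Bbb{C}^{*}$, and positive integers $k_{1},k_{2}$ with
\[
F_{4}=a\,h^{k_{1}},\qquad G_{6}=b\,h^{k_{2}}.
\]
Setting $d=\deg h$, the equations $dk_{1}=4$ and $dk_{2}=6$ leave only $d\in\{1,2\}$.

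The remaining work is a case analysis together with absorption of scalar factors. If $d=1$, then $k_{1}=4$ and $k_{2}=6$; setting $\tilde h=a^{1/4}h$ gives $F_{4}=\tilde h^{4}$ and $G_{6}=\alpha\tilde h^{6}$ for a suitable $\alpha\in\Bbb{C}^{*}$, which is conclusion~(2). If $d=2$, then $k_{1}=2$ and $k_{2}=3$, and I would further split on whether $h$ is squarefree: if yes, then $H:=\sqrt{a}\,h$ is squarefree of degree $2$ and yields $F_{4}=H^{2}$, $G_{6}=\alpha H^{3}$, i.e.\ conclusion~(1); if not, then $h=c\ell^{2}$ for some linear form $\ell$ and $c\in\Bbb{C}^{*}$, so $F_{4}=(ac^{2})\,\ell^{4}$ and $G_{6}=(bc^{3})\,\ell^{6}$, which after rescaling $\ell$ becomes conclusion~(2). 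I do not anticipate any serious obstacle here; the only point that needs care is precisely this squarefree/non-squarefree dichotomy in the $d=2$ case, since conclusion~(1) explicitly demands $H$ squarefree, and without this dichotomy the two listed alternatives would not be exhaustive.
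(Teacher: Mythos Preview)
Your proposal is correct and follows essentially the same route as the paper: deduce $[F_{4},G_{6}]=0$ from the degree hypothesis, invoke the Umirbaev--Yu lemma to write $F_{4}$ and $G_{6}$ as powers of a common homogeneous polynomial, observe that its degree must divide $\gcd(4,6)=2$, and in the degree-$2$ case split on squarefreeness, absorbing scalars via algebraic closedness throughout. The paper's write-up is terser but the logical skeleton is identical.
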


\begin{proof}
Since $\deg \left[ F,G\right] <10,$ we have $\left[ F_{4},G_{6}\right] =0,$
and so $F_{4}$ and $G_{6}$ are algebraically dependent. Thus there is a
homogeneous polynomial $\widetilde{H},$ $a,\alpha \in \Bbb{C}^{*}$ and $%
k_{1},k_{2}\in \Bbb{N}^{*}$ such that 
\begin{equation*}
F_{4}=a\widetilde{H}^{k_{1}},\qquad G_{6}=\alpha \widetilde{H}^{k_{2}}.
\end{equation*}
Because $\Bbb{C}$ is algebraically closed we can assume that $a=1.$ Since $%
\gcd \left( 4,6\right) =2,$ there are two possibilities: $k_{1}=2,$ $k_{2}=3$
or $k_{1}=4,$ $k_{2}=6.$ In the second case we take $h=\widetilde{H}.$ And
in the first case, $\widetilde{H}$ is either squarefree or not. If it is
squarefree we take $H=\widetilde{H}.$ And if it is not squarefree then there
exist a homogeneous polynomial $h$ of degree $1$ and $\gamma \in \Bbb{C}^{*}$
such that $\widetilde{H}=\gamma h^{2}.$ But since $\Bbb{C}$ is algebraically
closed we can assume that $\gamma =1.$
\end{proof}

\section{The case of squarefree $H$}

Now we consider the situation of Lemma \ref{Lem_deg_10}(1).

\begin{lemma}
\label{Lem_deg_9_H}Let $\deg \left[ F,G\right] <9$ and let $\alpha $ and $H$
be as in Lemma \ref{Lem_deg_10}(1). Then 
\begin{equation*}
G_{5}=\frac{3}{2}\alpha HF_{3}.
\end{equation*}
\end{lemma}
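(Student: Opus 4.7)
My plan is to extract the condition from the vanishing of the degree-$9$ homogeneous part of the formal Poisson bracket $[F,G]$, and then apply the $H$-reduction of Lemma \ref{Lemma_H_reduction} to the result.

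First, I would decompose $[F,G]$ by total degree. Since the partial derivatives of homogeneous forms $F_i$ and $G_j$ have degrees $i-1$ and $j-1$ respectively, and $\deg[x_k,x_l]=2$, the component of $[F,G]$ of total degree $d$ is $\sum_{i+j=d}[F_i,G_j]$ (with $F_1=x$, $G_1=z$). The hypothesis $\deg[F,G]<9$ forces in particular the degree-$9$ component to vanish, giving
\begin{equation*}
[F_4,G_5]+[F_3,G_6]=0.
\end{equation*}

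Next I would substitute $F_4=H^2$ and $G_6=\alpha H^3$ and apply the derivation property of $[P,\cdot]$ recalled just after Corollary \ref{Corr_H_reduction}. This yields
\begin{equation*}
2H\,[H,G_5]+3\alpha H^2\,[F_3,H]=0,
\end{equation*}
and since $\mathbb{C}[x,y,z]$ is a domain and $H\neq 0$, dividing the coefficient of each formal symbol $[x_k,x_l]$ by $H$ gives $2[H,G_5]=3\alpha H\,[H,F_3]$. Using the Leibniz rule in the other direction, $[H,HF_3]=H[H,F_3]+F_3[H,H]=H[H,F_3]$, this rearranges to
\begin{equation*}
\left[H,\;G_5-\tfrac{3}{2}\alpha H F_3\right]=0.
\end{equation*}

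Finally, the polynomial $G_5-\tfrac{3}{2}\alpha HF_3$ is homogeneous of degree $5$, and $H$ is squarefree, nonconstant, homogeneous of degree $2$. Since $2\nmid 5$, the second part of Lemma \ref{Lemma_H_reduction} forces $G_5-\tfrac{3}{2}\alpha HF_3=0$, which is the claimed identity. There is no real obstacle here: the argument is a clean first application of the $H$-reduction tool, and the degree incompatibility $2\nmid 5$ is exactly what makes the conclusion sharp.
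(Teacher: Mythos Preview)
Your proof is correct and follows essentially the same approach as the paper: extract the degree-$9$ vanishing condition $[F_4,G_5]+[F_3,G_6]=0$, rewrite it via the derivation property as $[H,\,\text{something}]=0$, and apply Lemma~\ref{Lemma_H_reduction}. The only cosmetic difference is that the paper keeps an extra factor of $H$ and applies the lemma to the degree-$7$ polynomial $2HG_5-3\alpha H^2F_3$, whereas you first cancel $H$ and apply it to the degree-$5$ polynomial $G_5-\tfrac{3}{2}\alpha HF_3$; both degrees are odd, so the conclusion is the same.
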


\begin{proof}
Since $\deg \left[ F,G\right] <9,$ it follows that 
\begin{equation*}
\left[ F_{4},G_{5}\right] +\left[ F_{3},G_{6}\right] =0.
\end{equation*}
By Lemma \ref{Lem_deg_10}(1), 
\begin{eqnarray*}
\left[ F_{4},G_{5}\right] +\left[ F_{3},G_{6}\right] &=&\left[
H^{2},G_{5}\right] +\left[ F_{3},\alpha H^{3}\right] \\
&=&2H\left[ H,G_{5}\right] +3\alpha H^{2}\left[ F_{3},H\right] \\
&=&2H\left[ H,G_{5}\right] -3\alpha H^{2}\left[ H,F_{3}\right] .
\end{eqnarray*}
Since $H$ is a constant for the derivation $P\mapsto \left[ H,P\right] ,$ we
see that 
\begin{equation*}
2H\left[ H,G_{5}\right] -3\alpha H^{2}\left[ H,F_{3}\right] =\left[
H,2HG_{5}-3\alpha H^{2}F_{3}\right] .
\end{equation*}
Thus $\left[ H,2HG_{5}-3\alpha H^{2}F_{3}\right] =0,$ and since $%
2HG_{5}-3\alpha H^{2}F_{3}\in \Bbb{C}[x,y,z]_{7},$ we have, by Lemma \ref
{Lemma_H_reduction}, $2HG_{5}-3\alpha H^{2}F_{3}=0.$
\end{proof}

\begin{lemma}
\label{Lem_deg_8_H}Let $\deg \left[ F,G\right] <8$ and let $\alpha $ and $H$
be as in Lemma \ref{Lem_deg_9_H}. Then there is a homogeneous polynomial $%
\widetilde{F}_{1}$ of degree $1$ and $b\in \Bbb{C}$ such that 
\begin{eqnarray*}
F_{3} &=&H\widetilde{F}_{1},\qquad G_{5}=\frac{3}{2}\alpha H^{2}\widetilde{F}%
_{1}, \\
G_{4} &=&\frac{3}{8}\alpha H\widetilde{F}_{1}^{2}+\frac{3}{2}\alpha
HF_{2}+bH^{2}.
\end{eqnarray*}
\end{lemma}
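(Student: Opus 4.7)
The plan is to exploit the hypothesis $\deg[F,G]<8$ at the level of the degree-$8$ homogeneous component of $[F,G]$, and then apply the $H$-reduction machinery (Lemma~\ref{Lemma_H_reduction}) exactly as in the proof of Lemma~\ref{Lem_deg_9_H}.

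The degree-$d$ homogeneous part of $[F,G]$ is $\sum_{i+j=d}[F_i,G_j]$ (where $F_1=x$, $G_1=z$). For $d=8$ the pairs $(i,j)$ with $i+j=8$, $1\le i\le4$, $1\le j\le 6$ are $(2,6),(3,5),(4,4)$, so $\deg[F,G]<8$ forces
\begin{equation*}
[F_2,G_6]+[F_3,G_5]+[F_4,G_4]=0.
\end{equation*}
Now I would substitute the information already obtained: $F_4=H^2$, $G_6=\alpha H^3$ (from Lemma~\ref{Lem_deg_10}(1)), and $G_5=\tfrac{3}{2}\alpha H F_3$ (from Lemma~\ref{Lem_deg_9_H}). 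Using the Leibniz rule and $[H,H]=0$, each term can be written as a Poisson bracket $[H,\cdot]$:
\begin{equation*}
[F_2,\alpha H^3]=-3\alpha H^2[H,F_2]=-[H,3\alpha H^2 F_2],
\end{equation*}
\begin{equation*}
[F_3,\tfrac{3}{2}\alpha H F_3]=-\tfrac{3}{2}\alpha F_3[H,F_3]=-[H,\tfrac{3}{4}\alpha F_3^2],
\end{equation*}
\begin{equation*}
[H^2,G_4]=2H[H,G_4]=[H,2HG_4].
\end{equation*}
Collecting everything, the degree-$8$ equation becomes
\begin{equation*}
[H,\,2HG_4-3\alpha H^2F_2-\tfrac{3}{4}\alpha F_3^2]=0.
\end{equation*}

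Since $2HG_4-3\alpha H^2F_2-\tfrac{3}{4}\alpha F_3^2$ is homogeneous of degree $6$ and $H$ is a squarefree homogeneous polynomial of degree $2$, Lemma~\ref{Lemma_H_reduction} yields a scalar $c\in\mathbb{C}$ with
\begin{equation*}
2HG_4-3\alpha H^2F_2-\tfrac{3}{4}\alpha F_3^2=cH^3.
\end{equation*}
Rearranging, $\tfrac{3}{4}\alpha F_3^2=2HG_4-3\alpha H^2F_2-cH^3$, so $H\mid F_3^2$. Because $H$ is squarefree this forces $H\mid F_3$, and I can write $F_3=H\widetilde{F}_1$ with $\widetilde{F}_1$ homogeneous of degree $1$. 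Substituting $F_3^2=H^2\widetilde F_1^2$ back and dividing by $2H$ gives
\begin{equation*}
G_4=\tfrac{3}{8}\alpha H\widetilde{F}_1^2+\tfrac{3}{2}\alpha H F_2+\tfrac{c}{2}H^2,
\end{equation*}
so we may set $b=c/2$. Finally, Lemma~\ref{Lem_deg_9_H} combined with $F_3=H\widetilde F_1$ gives $G_5=\tfrac{3}{2}\alpha H\cdot H\widetilde F_1=\tfrac{3}{2}\alpha H^2\widetilde F_1$, completing all three identities.

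No real obstacle is expected: this is essentially a higher-order reprise of the trick used in Lemma~\ref{Lem_deg_9_H}, the only genuinely new input being the divisibility statement $H\mid F_3$, which falls out automatically once the bracket $[H,\,\cdot\,]=0$ is produced and the squarefreeness of $H$ is used. The main care point is bookkeeping the signs and numerical coefficients when rewriting the three bracket terms as $[H,\cdot]$.
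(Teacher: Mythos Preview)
Your proof is correct and follows essentially the same route as the paper: isolate the degree-$8$ homogeneous component, rewrite each of the three bracket terms as $[H,\cdot]$, apply Lemma~\ref{Lemma_H_reduction} to conclude that $2HG_4-\tfrac{3}{4}\alpha F_3^2-3\alpha H^2F_2$ is a scalar multiple of $H^3$, and then use squarefreeness of $H$ to extract $H\mid F_3$. The only cosmetic difference is that the paper names the scalar $2b$ from the outset rather than writing $c$ and then setting $b=c/2$.
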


\begin{proof}
Since $\deg \left[ F,G\right] <8,$ we have 
\begin{equation*}
\left[ F_{4},G_{4}\right] +\left[ F_{3},G_{5}\right] +\left[
F_{2},G_{6}\right] =0.
\end{equation*}
By Lemma \ref{Lem_deg_9_H}, 
\begin{equation*}
\left[ F_{4},G_{4}\right] =\left[ H^{2},G_{4}\right] =2H\left[
H,G_{4}\right] =\left[ H,2HG_{4}\right] ,
\end{equation*}
\begin{equation*}
\left[ F_{3},G_{5}\right] =\left[ F_{3},\frac{3}{2}\alpha HF_{3}\right] =%
\frac{3}{2}\alpha F_{3}\left[ F_{3},H\right] =-\left[ H,\frac{3}{4}\alpha
F_{3}^{2}\right]
\end{equation*}
and 
\begin{equation*}
\left[ F_{2},G_{6}\right] =\left[ F_{2},\alpha H^{3}\right] =3\alpha
H^{2}\left[ F_{2},H\right] =-\left[ H,3\alpha H^{2}F_{2}\right] .
\end{equation*}
Thus $\left[ H,2HG_{4}-\frac{3}{4}\alpha F_{3}^{2}-3\alpha H^{2}F_{2}\right]
=0,$ and so there exists $b\in \Bbb{C}$ such that (see Lemma \ref
{Lemma_H_reduction}) 
\begin{equation}
2HG_{4}-\frac{3}{4}\alpha F_{3}^{2}-3\alpha H^{2}F_{2}=2bH^{3}.
\label{Row_lem_deg_8_H_1}
\end{equation}
Since $H|2HG_{4}-3\alpha H^{2}F_{2}$ and $\alpha \neq 0,$ we conclude that $%
H|F_{3}^{2}.$ Since $H$ is squarefree, it follows that $H|F_{3}.$ Thus there
exists a homogeneous polynomial $\widetilde{F}_{1}$ such that $F_{3}=H%
\widetilde{F}_{1}.$ Now (\ref{Row_lem_deg_8_H_1}) can be written as follows: 
\begin{equation*}
2HG_{4}-\frac{3}{4}\alpha H^{2}\widetilde{F}_{1}^{2}-3\alpha
H^{2}F_{2}=2bH^{3}.
\end{equation*}
\end{proof}

\begin{lemma}
\label{Lem_deg_7_H}Let $\deg \left[ F,G\right] <7$ and let $\alpha ,b,H,%
\widetilde{F}_{1}$ be as in Lemma \ref{Lem_deg_8_H}. Then 
\begin{equation*}
G_{3}=-\frac{1}{16}\alpha \widetilde{F}_{1}^{3}+bH\widetilde{F}_{1}+\frac{3}{%
2}\alpha Hx+\frac{3}{4}\alpha \widetilde{F}_{1}F_{2}.
\end{equation*}
\end{lemma}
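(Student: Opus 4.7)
The plan follows the same strategy as Lemmas \ref{Lem_deg_9_H} and \ref{Lem_deg_8_H}: extract a vanishing homogeneous component of $[F,G]$, rewrite it as $[H,X]$ for some polynomial $X$ of degree $3$, then invoke Lemma \ref{Lemma_H_reduction} together with $2=\deg H\nmid 3$ to conclude $X=0$ and read off $G_{3}$.

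More precisely, the assumption $\deg[F,G]<7$ forces the degree-$7$ homogeneous part of $[F,G]$ to vanish; since $F$ has no components in degree $>4$ and $G$ has no components in degree $>6$, the only contributions are from pairs $(i,j)$ with $i+j=7$, giving
\begin{equation*}
[F_{4},G_{3}]+[F_{3},G_{4}]+[F_{2},G_{5}]+[x,G_{6}]=0.
\end{equation*}
I would then substitute the formulas $F_{4}=H^{2}$, $G_{6}=\alpha H^{3}$, $F_{3}=H\widetilde{F}_{1}$, $G_{5}=\tfrac{3}{2}\alpha H^{2}\widetilde{F}_{1}$, and $G_{4}=\tfrac{3}{8}\alpha H\widetilde{F}_{1}^{2}+\tfrac{3}{2}\alpha HF_{2}+bH^{2}$ obtained from the previous lemmas, and expand each bracket using the Leibniz rule for $[\cdot,\cdot]$.

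The key algebraic observation is that the stray terms of the form $H^{2}[\widetilde{F}_{1},F_{2}]$, which do not a priori involve $H$ inside a bracket, come from both $[F_{3},G_{4}]$ (via the $\tfrac{3}{2}\alpha HF_{2}$ part of $G_{4}$) and from $[F_{2},G_{5}]$, and they cancel. After dividing the remaining identity by $H$, every surviving term is either of the form $f\,[H,g]$ or already $[H,g]$, and the standard rewrites $H[H,f]=[H,Hf]$, $f[H,g]+g[H,f]=[H,fg]$, and $\widetilde{F}_{1}^{2}[H,\widetilde{F}_{1}]=\tfrac{1}{3}[H,\widetilde{F}_{1}^{3}]$ let one collect everything as $[H,X]=0$ with
\begin{equation*}
X=2G_{3}-2bH\widetilde{F}_{1}+\tfrac{1}{8}\alpha\widetilde{F}_{1}^{3}-\tfrac{3}{2}\alpha\widetilde{F}_{1}F_{2}-3\alpha Hx.
\end{equation*}

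Since $X$ is homogeneous of degree $3$ and $\deg H=2\nmid 3$, Lemma \ref{Lemma_H_reduction} forces $X=0$, and solving for $G_{3}$ yields the claimed formula. The only real obstacle is bookkeeping: one must organize the numerous bracket expansions carefully and verify that every term not naturally of the form $[H,\cdot]$ either cancels (as the $H^{2}[\widetilde{F}_{1},F_{2}]$ terms do) or can be absorbed using the derivation identities for $[H,\cdot]$; there are no conceptual obstacles beyond what was already handled in Lemmas \ref{Lem_deg_9_H} and \ref{Lem_deg_8_H}.
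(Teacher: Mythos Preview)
Your proposal is correct and follows essentially the same approach as the paper: both extract the degree-$7$ vanishing condition, substitute the data from Lemmas \ref{Lem_deg_9_H} and \ref{Lem_deg_8_H}, observe the cancellation of the $H^{2}[\widetilde{F}_{1},F_{2}]$ terms, and collapse the remainder into a single bracket $[H,\,\cdot\,]$ to invoke Lemma \ref{Lemma_H_reduction}. The only cosmetic difference is that the paper does not divide through by $H$; it keeps the common factor and applies Lemma \ref{Lemma_H_reduction} to the degree-$5$ polynomial $HX=2HG_{3}+\tfrac{1}{8}\alpha H\widetilde{F}_{1}^{3}-2bH^{2}\widetilde{F}_{1}-3\alpha H^{2}x-\tfrac{3}{2}\alpha H\widetilde{F}_{1}F_{2}$ (using $2\nmid 5$), whereas you strip off $H$ first and work with $X$ in degree $3$ (using $2\nmid 3$)---an equivalent and arguably slightly cleaner bookkeeping choice.
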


\begin{proof}
Since $\deg \left[ F,G\right] <7,$ we see that 
\begin{equation}
\left[ F_{4},G_{3}\right] +\left[ F_{3},G_{4}\right] +\left[
F_{2},G_{5}\right] +\left[ x,G_{6}\right] =0.  \label{Row_lem_deg_7_H_1}
\end{equation}
By Lemma \ref{Lem_deg_9_H}, 
\begin{equation}
\left[ F_{4},G_{3}\right] =\left[ H^{2},G_{3}\right] =2H\left[
H,G_{3}\right] =\left[ H,2HG_{3}\right] ,  \label{Row_lem_deg_7_H_2}
\end{equation}
and by Lemma \ref{Lem_deg_8_H}, 
\begin{eqnarray}
\left[ F_{3},G_{4}\right] &=&\left[ H\widetilde{F}_{1},\frac{3}{8}\alpha H%
\widetilde{F}_{1}^{2}+\frac{3}{2}\alpha HF_{2}+bH^{2}\right]
\label{Row_lem_deg_7_H_3} \\
&=&H\left[ \widetilde{F}_{1},\frac{3}{8}\alpha H\widetilde{F}_{1}^{2}+\frac{3%
}{2}\alpha HF_{2}+bH^{2}\right]  \notag \\
&&+\widetilde{F}_{1}\left[ H,\frac{3}{8}\alpha H\widetilde{F}_{1}^{2}+\frac{3%
}{2}\alpha HF_{2}+bH^{2}\right]  \notag \\
&=&\underset{----------}{\frac{3}{8}\alpha H\widetilde{F}_{1}^{2}\left[ 
\widetilde{F}_{1},H\right] }+\underset{==========}{\frac{3}{2}\alpha
H^{2}\left[ \widetilde{F}_{1},F_{2}\right] }+\underset{---+---+---}{\frac{3}{%
2}\alpha HF_{2}\left[ \widetilde{F}_{1},H\right] }  \notag \\
&&+2bH^{2}\left[ \widetilde{F}_{1},H\right] +\underset{---------}{\frac{3}{4}%
\alpha H\widetilde{F}_{1}^{2}\left[ H,\widetilde{F}_{1}\right] }+\underset{%
---+---+---}{\frac{3}{2}\alpha H\widetilde{F}_{1}\left[ H,F_{2}\right] }, 
\notag
\end{eqnarray}
\begin{eqnarray}
\left[ F_{2},G_{5}\right] &=&\left[ F_{2},\frac{3}{2}\alpha H^{2}\widetilde{F%
}_{1}\right]  \label{Row_lem_deg_7_H_4} \\
&=&\underset{===========}{\frac{3}{2}\alpha H^{2}\left[ F_{2},\widetilde{F}%
_{1}\right] }+\underset{---+---+---}{3\alpha H\widetilde{F}_{1}\left[
F_{2},H\right] },  \notag
\end{eqnarray}
\begin{equation}
\left[ x,G_{6}\right] =\left[ x,\alpha H^{3}\right] =3\alpha H^{2}\left[
x,H\right] =\left[ H,-3\alpha H^{2}x\right] .  \label{Row_lem_deg_7_H_5}
\end{equation}
Notice that: 
\begin{equation}
\underset{=======================}{\frac{3}{2}\alpha H^{2}\left[ \widetilde{F%
}_{1},F_{2}\right] +\frac{3}{2}\alpha H^{2}\left[ F_{2},\widetilde{F}%
_{1}\right] }=0,  \label{Row_lem_deg_7_H_6}
\end{equation}
\begin{equation}
\underset{-----------------------}{\frac{3}{8}\alpha H\widetilde{F}%
_{1}^{2}\left[ \widetilde{F}_{1},H\right] +\frac{3}{4}\alpha H\widetilde{F}%
_{1}^{2}\left[ H,\widetilde{F}_{1}\right] }=\frac{3}{8}\alpha H\widetilde{F}%
_{1}^{2}\left[ H,\widetilde{F}_{1}\right] =\left[ H,\frac{1}{8}\alpha H%
\widetilde{F}_{1}^{3}\right] ,  \label{Row_lem_deg7_H_6b}
\end{equation}
\begin{eqnarray}
&&\underset{---+---+---+---+---+---+---+---+---}{\frac{3}{2}\alpha
HF_{2}\left[ \widetilde{F}_{1},H\right] +\frac{3}{2}\alpha H\widetilde{F}%
_{1}\left[ H,F_{2}\right] +3\alpha H\widetilde{F}_{1}\left[ F_{2},H\right] }
\label{Row_lem_deg_7_H_7} \\
&=&\frac{3}{2}\alpha H\left( F_{2}\left[ \widetilde{F}_{1},H\right] +%
\widetilde{F}_{1}\left[ F_{2},H\right] \right) =\left[ \frac{3}{2}\alpha H%
\widetilde{F}_{1}F_{2},H\right] ,  \notag
\end{eqnarray}
By (\ref{Row_lem_deg_7_H_1})-(\ref{Row_lem_deg_7_H_7}) 
\begin{equation*}
\left[ H,2HG_{3}+\frac{1}{8}\alpha H\widetilde{F}_{1}^{3}-2bH^{2}\widetilde{F%
}_{1}-3\alpha H^{2}x-\frac{3}{2}\alpha H\widetilde{F}_{1}F_{2}\right] =0.
\end{equation*}
Since $2HG_{3}+\frac{1}{8}\alpha H\widetilde{F}_{1}^{3}-2bH^{2}\widetilde{F}%
_{1}-3\alpha H^{2}x-\frac{3}{2}\alpha H\widetilde{F}_{1}F_{2}\in \Bbb{C}%
[x,y,z]_{5},$ we conclude that (see Lemma \ref{Lemma_H_reduction}) 
\begin{equation*}
2HG_{3}+\frac{1}{8}\alpha H\widetilde{F}_{1}^{3}-2bH^{2}\widetilde{F}%
_{1}-3\alpha H^{2}x-\frac{3}{2}\alpha H\widetilde{F}_{1}F_{2}=0.
\end{equation*}
This gives the formula for $G_{3.}$
\end{proof}

\begin{lemma}
\label{Lem_deg_6_H}Let $\deg \left[ F,G\right] <6$ and let $\alpha ,b,H,%
\widetilde{F}_{1}$ be as in Lemma \ref{Lem_deg_7_H}. Then there exist $%
c,d\in \Bbb{C}$ such that 
\begin{eqnarray*}
F_{2} &=&\frac{1}{4}\left( \widetilde{F}_{1}^{2}+dH\right) , \\
G_{4} &=&\frac{3}{4}\alpha H\widetilde{F}_{1}^{2}+\left( \frac{3}{8}\alpha
d+b\right) H^{2}, \\
G_{3} &=&\frac{1}{8}\alpha \widetilde{F}_{1}^{3}+\left( b+\frac{3}{16}\alpha
d\right) H\widetilde{F}_{1}+\frac{3}{2}\alpha Hx, \\
G_{2} &=&AH-\frac{1}{4}b\widetilde{F}_{1}^{2}+\frac{3}{4}\alpha x\widetilde{F%
}_{1},
\end{eqnarray*}
where $A=\frac{3}{128}\alpha d^{2}-\frac{1}{4}bd+\frac{1}{2}c.$
\end{lemma}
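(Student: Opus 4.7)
The plan is to exploit the vanishing of the degree-$6$ homogeneous component of $[F,G]$, namely
\[
[F_4, G_2] + [F_3, G_3] + [F_2, G_4] + [x, G_5] = 0,
\]
and to reduce it to a single relation of the form $[H,\Phi]=0$ that can be handled by Corollary~\ref{Corr_H_reduction}. First I would substitute the expressions for $F_3$, $F_4$, $G_3$, $G_4$, $G_5$ given by Lemmas~\ref{Lem_deg_10}--\ref{Lem_deg_7_H}, expand every Poisson bracket via the Leibniz rule, and then regroup using antisymmetry of $[\,\cdot\,,\,\cdot\,]$ together with $[H,H]=0$.

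The key intermediate step is to split the expanded sum into two pieces according to whether each summand carries an explicit factor of $H$. I expect the terms lacking such a factor to combine into $-\tfrac{3}{4}\alpha\,[H,\,(F_2 - \tfrac{1}{4}\widetilde{F}_1^2)^2]$, the perfect-square pattern being the feature that drives the conclusion; and the remaining, manifestly $H$-divisible, terms to reorganise, once $[\widetilde{F}_1, G_3]$ is fully expanded and a number of internal cancellations are carried out, into $[H,\,2HG_2 - \tfrac{3}{2}\alpha Hx\widetilde{F}_1 - 2bHF_2]$. Adding the two contributions produces a single equation
\[
[H,\; 2HG_2 - \tfrac{3}{2}\alpha Hx\widetilde{F}_1 - 2bHF_2 - \tfrac{3}{4}\alpha(F_2 - \tfrac{1}{4}\widetilde{F}_1^2)^2] = 0,
\]
with the bracketed expression homogeneous of degree~$4$.

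Corollary~\ref{Corr_H_reduction} then forces this bracketed expression to lie in $\Bbb{C}[H]$; being homogeneous of degree $4 = 2\deg H$, it must equal $\tfrac{1}{2}cH^2$ for some $c\in\Bbb{C}$. Reading the resulting identity modulo $H$ immediately gives $H\mid(F_2-\tfrac{1}{4}\widetilde{F}_1^2)^2$, and since $H$ is squarefree, $H\mid(F_2-\tfrac{1}{4}\widetilde{F}_1^2)$; comparing degrees yields $F_2-\tfrac{1}{4}\widetilde{F}_1^2 = \tfrac{1}{4}dH$ for some $d\in\Bbb{C}$, i.e.\ the asserted formula for $F_2$. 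The formulas for $G_3$ and $G_4$ then follow at once by substituting this $F_2$ into the expressions of Lemmas~\ref{Lem_deg_7_H} and~\ref{Lem_deg_8_H}; and solving the displayed equation for $G_2$, using $(F_2-\tfrac{1}{4}\widetilde{F}_1^2)^2 = \tfrac{1}{16}d^2H^2$, gives the claimed formula for $G_2$ with $A$ the specific polynomial in $\alpha,b,c,d$ stated in the lemma.

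The main obstacle is the bookkeeping behind the clean split in the second paragraph. The terms coming from $\widetilde{F}_1[H,G_3]$, $[F_2,G_4]$, and $[x,G_5]$ share several mutual cross-terms, in particular contributions of shape $b\widetilde{F}_1[H,\widetilde{F}_1]$, $H[\widetilde{F}_1,x]$, and $\widetilde{F}_1[\widetilde{F}_1,F_2]$, which must cancel exactly so that only $-\tfrac{3}{2}\alpha[H,x\widetilde{F}_1]-2b[H,F_2]$ survives inside the $H$-divisible piece; dually, recognising the non-$H$ residues as the derivation of the square $(F_2-\tfrac{1}{4}\widetilde{F}_1^2)^2$ is the key observation that powers the divisibility argument pinning down $F_2$. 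Executing these cancellations cleanly is the bulk of the work.
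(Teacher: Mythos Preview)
Your plan is correct and mirrors the paper's proof almost exactly: the paper likewise expands the degree-$6$ vanishing condition, collects everything into a single bracket $[H,\,2HG_2-\tfrac{3}{64}\alpha\widetilde{F}_1^4-\tfrac{3}{4}\alpha F_2^2+\tfrac{3}{8}\alpha\widetilde{F}_1^2F_2\pm 2bHF_2-\tfrac{3}{2}\alpha Hx\widetilde{F}_1]=0$, applies Lemma~\ref{Lemma_H_reduction} to get $cH^2$, recognises the non-$H$ part as $-\tfrac{3}{64}\alpha(\widetilde{F}_1^2-4F_2)^2$ (your $-\tfrac{3}{4}\alpha(F_2-\tfrac14\widetilde{F}_1^2)^2$), and finishes via squarefreeness of $H$. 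The only cosmetic discrepancies are your normalization $\tfrac12 cH^2$ versus the paper's $cH^2$ (which shifts the constant in $A$) and the sign on the $2bHF_2$ term; neither affects the argument.
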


\begin{proof}
Since $\deg \left[ F,G\right] <6,$ we have 
\begin{equation}
\left[ F_{4},G_{2}\right] +\left[ F_{3},G_{3}\right] +\left[
F_{2},G_{4}\right] +\left[ x,G_{5}\right] =0.  \label{Row_lem_deg_6_H_1}
\end{equation}

By Lemma \ref{Lem_deg_9_H}, 
\begin{equation}
\left[ F_{4},G_{2}\right] =\left[ H^{2},G_{2}\right] =2H\left[
H,G_{2}\right] =\left[ H,2HG_{2}\right] ,  \label{Row_lem_deg_6_H_2}
\end{equation}
and by Lemmas \ref{Lem_deg_8_H} and \ref{Lem_deg_7_H}, 
\begin{eqnarray}
&&\left[ F_{3},G_{3}\right]  \label{Row_lem_deg_6_H_3} \\
&=&\left[ H\widetilde{F}_{1},-\frac{1}{16}\alpha \widetilde{F}_{1}^{3}+bH%
\widetilde{F}_{1}+\frac{3}{2}\alpha Hx+\frac{3}{4}\alpha \widetilde{F}%
_{1}F_{2}\right]  \notag \\
&=&\left[ H\widetilde{F}_{1},-\frac{1}{16}\alpha \widetilde{F}_{1}^{3}+\frac{%
3}{2}\alpha Hx+\frac{3}{4}\alpha \widetilde{F}_{1}F_{2}\right]  \notag \\
&=&H\left[ \widetilde{F}_{1},-\frac{1}{16}\alpha \widetilde{F}_{1}^{3}+\frac{%
3}{2}\alpha Hx+\frac{3}{4}\alpha \widetilde{F}_{1}F_{2}\right]  \notag \\
&&+\widetilde{F}_{1}\left[ H,-\frac{1}{16}\alpha \widetilde{F}_{1}^{3}+\frac{%
3}{2}\alpha Hx+\frac{3}{4}\alpha \widetilde{F}_{1}F_{2}\right]  \notag \\
&=&\underset{---------}{\frac{3}{2}\alpha Hx\left[ \widetilde{F}%
_{1},H\right] }+\underset{--+--+--+--}{\frac{3}{2}\alpha H^{2}\left[ 
\widetilde{F}_{1},x\right] }+\underset{===========}{\frac{3}{4}\alpha H%
\widetilde{F}_{1}\left[ \widetilde{F}_{1},F_{2}\right] }  \notag \\
&&-\frac{3}{16}\alpha \widetilde{F}_{1}^{3}\left[ H,\widetilde{F}_{1}\right]
+\underset{---------}{\frac{3}{2}\alpha H\widetilde{F}_{1}\left[ H,x\right] }%
+\underset{\symbol{94}\symbol{94}\symbol{94}\symbol{94}\symbol{94}\symbol{94}%
\symbol{94}\symbol{94}\symbol{94}\symbol{94}\symbol{94}\symbol{94}}{\frac{3}{%
4}\alpha \widetilde{F}_{1}^{2}\left[ H,F_{2}\right] }+\underset{\symbol{94}%
\symbol{94}\symbol{94}\symbol{94}\symbol{94}\symbol{94}\symbol{94}\symbol{94}%
\symbol{94}\symbol{94}\symbol{94}\symbol{94}}{\frac{3}{4}\alpha \widetilde{F}%
_{1}F_{2}\left[ H,\widetilde{F}_{1}\right] },  \notag
\end{eqnarray}
\begin{eqnarray}
\left[ F_{2},G_{4}\right] &=&\left[ F_{2},\frac{3}{8}\alpha H\widetilde{F}%
_{1}^{2}+\frac{3}{2}HF_{2}+bH^{2}\right]  \label{Row_lem_deg_6_H_4} \\
&=&\underset{=============}{\frac{3}{4}\alpha H\widetilde{F}_{1}\left[ F_{2},%
\widetilde{F}_{1}\right] }+\underset{\symbol{94}\symbol{94}\symbol{94}%
\symbol{94}\symbol{94}\symbol{94}\symbol{94}\symbol{94}\symbol{94}\symbol{94}%
\symbol{94}\symbol{94}}{\frac{3}{8}\alpha \widetilde{F}_{1}^{2}\left[
F_{2},H\right] }  \notag \\
&&+\frac{3}{2}\alpha F_{2}\left[ F_{2},H\right] +2bH\left[ F_{2},H\right] , 
\notag
\end{eqnarray}
\begin{equation}
\left[ x,G_{5}\right] =\left[ x,\frac{3}{2}\alpha H^{2}\widetilde{F}%
_{1}\right] =\underset{--+--+--}{\frac{3}{2}\alpha H^{2}\left[ x,\widetilde{F%
}_{1}\right] }+\underset{--------}{3\alpha H\widetilde{F}_{1}\left[
x,H\right] }.  \label{Row_lem_deg_6_H_5}
\end{equation}
Notice that: 
\begin{equation}
\underset{======================}{\frac{3}{4}\alpha H\widetilde{F}_{1}\left[ 
\widetilde{F}_{1},F_{2}\right] +\frac{3}{4}\alpha H\widetilde{F}_{1}\left[
F_{2},\widetilde{F}_{1}\right] }=0,  \label{Row_lem_deg_6_H_6}
\end{equation}
\begin{equation}
\underset{--+--+--+--+--+--+--}{\frac{3}{2}\alpha H^{2}\left[ \widetilde{F}%
_{1},x\right] +\frac{3}{2}\alpha H^{2}\left[ x,\widetilde{F}_{1}\right] }=0,
\label{Row_lem_deg_6_H_7}
\end{equation}
\begin{eqnarray}
&&\underset{\symbol{94}\symbol{94}\symbol{94}\symbol{94}\symbol{94}\symbol{94%
}\symbol{94}\symbol{94}\symbol{94}\symbol{94}\symbol{94}\symbol{94}\symbol{94%
}\symbol{94}\symbol{94}\symbol{94}\symbol{94}\symbol{94}\symbol{94}\symbol{94%
}\symbol{94}\symbol{94}\symbol{94}\symbol{94}\symbol{94}\symbol{94}\symbol{94%
}\symbol{94}\symbol{94}\symbol{94}\symbol{94}\symbol{94}\symbol{94}\symbol{94%
}\symbol{94}\symbol{94}\symbol{94}\symbol{94}}{\frac{3}{4}\alpha \widetilde{F%
}_{1}^{2}\left[ H,F_{2}\right] +\frac{3}{4}\alpha \widetilde{F}%
_{1}F_{2}\left[ H,\widetilde{F}_{1}\right] +\frac{3}{8}\alpha \widetilde{F}%
_{1}^{2}\left[ F_{2},H\right] }  \label{Row_lem_deg_6_H_8} \\
&=&\frac{3}{8}\alpha \left( \widetilde{F}_{1}^{2}\left[ H,F_{2}\right] +2%
\widetilde{F}_{1}F_{2}\left[ H,\widetilde{F}_{1}\right] \right) =\left[ H,%
\frac{3}{8}\alpha \widetilde{F}_{1}^{2}F_{2}\right] ,  \notag
\end{eqnarray}
\begin{eqnarray}
&&\underset{--------------------------------}{\frac{3}{2}\alpha Hx\left[ 
\widetilde{F}_{1},H\right] +\frac{3}{2}\alpha H\widetilde{F}_{1}\left[
H,x\right] +3\alpha H\widetilde{F}_{1}\left[ x,H\right] }
\label{Row_lem_deg_6_H_9} \\
&=&\frac{3}{2}\alpha H\left( x\left[ \widetilde{F}_{1},H\right] +\widetilde{F%
}_{1}\left[ x,H\right] \right) =\left[ \frac{3}{2}\alpha Hx\widetilde{F}%
_{1},H\right] .  \notag
\end{eqnarray}
By (\ref{Row_lem_deg_6_H_1})-(\ref{Row_lem_deg_6_H_9}) we have 
\begin{equation*}
\left[ H,2HG_{2}-\frac{3}{64}\alpha \widetilde{F}_{1}^{4}-\frac{3}{4}\alpha
F_{2}^{2}+2bHF_{2}+\frac{3}{8}\alpha \widetilde{F}_{1}^{2}F_{2}-\frac{3}{2}%
\alpha Hx\widetilde{F}_{1}\right] =0.
\end{equation*}
Thus there exists $c\in \Bbb{C}$ such that 
\begin{equation}
2HG_{2}-\frac{3}{64}\alpha \widetilde{F}_{1}^{4}-\frac{3}{4}\alpha
F_{2}^{2}+2bHF_{2}+\frac{3}{8}\alpha \widetilde{F}_{1}^{2}F_{2}-\frac{3}{2}%
\alpha Hx\widetilde{F}_{1}=cH^{2}.  \label{Row_lem_deg_6_H_10}
\end{equation}
Since $H|2HG_{2}+2bHF_{2}-\frac{3}{2}\alpha Hx\widetilde{F}_{1},$ we
conclude that 
\begin{equation*}
H|-\frac{3}{64}\alpha \left( \widetilde{F}_{1}^{4}-8\widetilde{F}%
_{1}^{2}F_{2}+16F_{2}^{2}\right) =-\frac{3}{64}\alpha \left( \widetilde{F}%
_{1}^{2}-4F_{2}\right) ^{2}.
\end{equation*}
Then $H|\widetilde{F}_{1}^{2}-4F_{2},$ because $H$ is squarefree and $\alpha
\neq 0.$ Thus there is $d\in \Bbb{C}$ such that $\widetilde{F}%
_{1}^{2}-4F_{2}=-dH$ or equivalently 
\begin{equation}
F_{2}=\frac{1}{4}\left( \widetilde{F}_{1}^{2}+dH\right) .
\label{Row_lem_deg_6_H_11}
\end{equation}
Using (\ref{Row_lem_deg_6_H_11}) we can rewrite (\ref{Row_lem_deg_6_H_10})
as 
\begin{equation*}
2HG_{2}-\frac{3}{64}\alpha d^{2}H^{2}+2bHF_{2}-\frac{3}{2}\alpha Hx%
\widetilde{F}_{1}=cH^{2}.
\end{equation*}
The last equality and (\ref{Row_lem_deg_6_H_11}) give the formula for $%
G_{2}. $

Lemma \ref{Lem_deg_7_H} and (\ref{Row_lem_deg_6_H_11}) also give 
\begin{eqnarray*}
G_{3} &=&-\frac{1}{16}\alpha \widetilde{F}_{1}^{3}+bH\widetilde{F}_{1}+\frac{%
3}{2}\alpha Hx+\frac{3}{16}\alpha \widetilde{F}_{1}^{3}+\frac{3}{16}\alpha d%
\widetilde{F}_{1}H \\
&=&\frac{1}{8}\alpha \widetilde{F}_{1}^{3}+\left( b+\frac{3}{16}\alpha
d\right) H\widetilde{F}_{1}+\frac{3}{2}\alpha Hx
\end{eqnarray*}
and Lemma \ref{Lem_deg_8_H} and (\ref{Row_lem_deg_6_H_11}) give 
\begin{eqnarray*}
G_{4} &=&\frac{3}{8}\alpha H\widetilde{F}_{1}^{2}+\frac{3}{8}\alpha H%
\widetilde{F}_{1}^{2}+\frac{3}{8}\alpha dH^{2}+bH^{2} \\
&=&\frac{3}{4}\alpha H\widetilde{F}_{1}^{2}+\left( \frac{3}{8}\alpha
d+b\right) H^{2}.
\end{eqnarray*}
\end{proof}

\begin{lemma}
\label{Lem_deg_5_H}Let $\deg \left[ F,G\right] <5$ and let $\alpha ,b,c,d,H,%
\widetilde{F}_{1}$ be as in Lemma \ref{Lem_deg_6_H}. Then 
\begin{eqnarray*}
b &=&0, \\
G_{4} &=&\frac{3}{4}\alpha H\widetilde{F}_{1}^{2}+\frac{3}{8}\alpha dH^{2},
\\
G_{3} &=&\frac{1}{8}\alpha \widetilde{F}_{1}^{3}+\frac{3}{16}\alpha dH%
\widetilde{F}_{1}+\frac{3}{2}\alpha Hx, \\
G_{2} &=&AH+\frac{3}{4}\alpha x\widetilde{F}_{1}, \\
z &=&M\widetilde{F}_{1}+\frac{3}{16}\alpha dx,
\end{eqnarray*}
where $M=-\frac{3}{256}\alpha d^{2}+\frac{1}{4}c.$
\end{lemma}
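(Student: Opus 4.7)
The plan is to extract the degree-$5$ component of the hypothesized vanishing $\deg[F,G]<5$, which reads
\begin{equation*}
[H^{2},z]+[H\widetilde{F}_{1},G_{2}]+[F_{2},G_{3}]+[x,G_{4}]=0,
\end{equation*}
then substitute the explicit formulas for $F_{2},G_{2},G_{3},G_{4}$ provided by Lemma \ref{Lem_deg_6_H} and expand each Poisson bracket via the Leibniz rule. Following the pattern of the proofs of Lemmas \ref{Lem_deg_7_H} and \ref{Lem_deg_6_H}, I would repeatedly apply the collapse $R[S,H]+S[R,H]=[RS,H]$ and cancel antisymmetric pairs of the form $Q([P,R]+[R,P])=0$, with the aim of rewriting the identity as
\begin{equation*}
\bigl[H,\,2Hz-E\bigr]=0,
\end{equation*}
where $E\in\mathbb{C}[x,y,z]_{3}$ is a completely explicit polynomial in $x,\widetilde{F}_{1},H$ whose coefficients depend on $\alpha,b,c,d$.

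Because $2Hz-E$ has degree $3$ and $\deg H=2\nmid 3$, Lemma \ref{Lemma_H_reduction} then forces $2Hz-E=0$, i.e.\ $2Hz=E$. The left-hand side is manifestly divisible by $H$, so every summand of $E$ not already carrying an $H$-factor must combine into a multiple of $H$. The summands of $E$ with an intrinsic $\widetilde{F}_{1}^{3}$ component come from the $b\widetilde{F}_{1}^{2}$-piece of $G_{2}$ (through $[H\widetilde{F}_{1},\widetilde{F}_{1}^{2}]$) and from the $\widetilde{F}_{1}^{2}$-piece of $F_{2}$ bracketed against the $bH\widetilde{F}_{1}$-piece of $G_{3}$; collecting the $\widetilde{F}_{1}^{3}$-coefficient yields a nonzero scalar multiple of $b$. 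Since $H$ is squarefree of degree $2$ and $\widetilde{F}_{1}$ has degree $1$, divisibility $H\mid b\widetilde{F}_{1}^{3}$ forces $b=0$. With $b=0$, cancelling the common factor $2H$ from both sides of $2Hz=E$ produces the claimed identity
\begin{equation*}
z=M\widetilde{F}_{1}+\tfrac{3}{16}\alpha dx,\qquad M=-\tfrac{3}{256}\alpha d^{2}+\tfrac{1}{4}c,
\end{equation*}
and the stated simplifications of $G_{2},G_{3},G_{4}$ are immediate upon substituting $b=0$ into the formulas of Lemma \ref{Lem_deg_6_H}.

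The main obstacle is sheer bookkeeping rather than conceptual difficulty: the expansion produces on the order of a dozen Poisson-bracket monomials, and one must verify that every term except those assembling into $[H,2Hz]$ and $[H,-E]$ cancels either by antisymmetry or by the Leibniz collapse above. This mirrors the term-by-term underline/caret pairings carried out in the proofs of Lemmas \ref{Lem_deg_7_H} and \ref{Lem_deg_6_H}, now at one degree lower and with the additional bonus that the ``degree $3$ is not a multiple of $\deg H$'' obstruction upgrades the conclusion from ``equals $\lambda H^{k}$'' to ``equals $0$'', which is exactly what manufactures the unexpected linear identity for $z$.
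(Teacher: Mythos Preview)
Your proposal is correct and follows essentially the same approach as the paper: expand the degree-$5$ component of $[F,G]$, collapse via Leibniz and antisymmetry into $[H,2Hz-E]=0$ with $E\in\Bbb{C}[x,y,z]_{3}$, invoke Lemma~\ref{Lemma_H_reduction} (using $2\nmid 3$) to obtain $2Hz=E$, and then use squarefreeness of $H$ together with $H\mid b\,\widetilde{F}_{1}^{3}$ to force $b=0$ and read off the formula for $z$. The paper's explicit expression is $E=AH\widetilde{F}_{1}+\tfrac{1}{3}b\,\widetilde{F}_{1}^{3}-\bigl(\tfrac{1}{4}bd+\tfrac{3}{64}\alpha d^{2}\bigr)H\widetilde{F}_{1}+\bigl(\tfrac{3}{8}\alpha d+2b\bigr)Hx$, confirming your prediction that the sole term of $E$ not divisible by $H$ is a scalar multiple of $b\,\widetilde{F}_{1}^{3}$.
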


\begin{proof}
Since $\deg \left[ F,G\right] <5,$ we see that 
\begin{equation}
\left[ F_{4},z\right] +\left[ F_{3},G_{2}\right] +\left[ F_{2},G_{3}\right]
+\left[ x,G_{4}\right] =0.  \label{Row_lem_deg_5_H_1}
\end{equation}
By Lemma \ref{Lem_deg_9_H}, 
\begin{equation}
\left[ F_{4},z\right] =\left[ H^{2},z\right] =2H\left[ H,z\right] =\left[
H,2Hz\right] ,  \label{Row_lem_deg_5_H_1b}
\end{equation}
and by Lemmas \ref{Lem_deg_8_H} and \ref{Lem_deg_6_H}, 
\begin{eqnarray}
\left[ F_{3},G_{2}\right]  &=&\left[ H\widetilde{F}_{1},AH-\frac{1}{4}b%
\widetilde{F}_{1}^{2}+\frac{3}{4}\alpha x\widetilde{F}_{1}\right] 
\label{Row_lem_deg_5_H_2} \\
&=&H\left[ \widetilde{F}_{1},AH-\frac{1}{4}b\widetilde{F}_{1}^{2}+\frac{3}{4}%
\alpha x\widetilde{F}_{1}\right]   \notag \\
&&+\widetilde{F}_{1}\left[ H,AH-\frac{1}{4}b\widetilde{F}_{1}^{2}+\frac{3}{4}%
\alpha x\widetilde{F}_{1}\right]   \notag \\
&=&AH\left[ \widetilde{F}_{1},H\right] +\underset{----------}{\frac{3}{4}%
\alpha H\widetilde{F}_{1}\left[ \widetilde{F}_{1},x\right] }  \notag \\
&&\underset{\symbol{94}\symbol{94}\symbol{94}\symbol{94}\symbol{94}\symbol{94%
}\symbol{94}\symbol{94}\symbol{94}\symbol{94}\symbol{94}}{-\frac{1}{2}b%
\widetilde{F}_{1}^{2}\left[ H,\widetilde{F}_{1}\right] }+\underset{%
===========}{\frac{3}{4}\alpha x\widetilde{F}_{1}\left[ H,\widetilde{F}%
_{1}\right] }+\underset{--+--+--}{\frac{3}{4}\alpha \widetilde{F}%
_{1}^{2}\left[ H,x\right] },  \notag
\end{eqnarray}
\begin{eqnarray}
&&\left[ F_{2},G_{3}\right]   \label{Row_lem_deg_5_H_3} \\
&=&\left[ \frac{1}{4}\widetilde{F}_{1}^{2}+\frac{1}{4}dH,\frac{1}{8}\alpha 
\widetilde{F}_{1}^{3}+\left( b+\frac{3}{16}\alpha d\right) H\widetilde{F}%
_{1}+\frac{3}{2}\alpha Hx\right]   \notag \\
&=&\frac{1}{2}\widetilde{F}_{1}\left[ \widetilde{F}_{1},\frac{1}{8}\alpha 
\widetilde{F}_{1}^{3}+\left( b+\frac{3}{16}\alpha d\right) H\widetilde{F}%
_{1}+\frac{3}{2}\alpha Hx\right]   \notag \\
&&+\frac{1}{4}d\left[ H,\frac{1}{8}\alpha \widetilde{F}_{1}^{3}+\left( b+%
\frac{3}{16}\alpha d\right) H\widetilde{F}_{1}+\frac{3}{2}\alpha Hx\right]  
\notag \\
&=&\underset{\symbol{94}\symbol{94}\symbol{94}\symbol{94}\symbol{94}\symbol{%
94}\symbol{94}\symbol{94}\symbol{94}\symbol{94}\symbol{94}\symbol{94}\symbol{%
94}\symbol{94}\symbol{94}\symbol{94}\symbol{94}\symbol{94}\symbol{94}}{%
\left( \frac{1}{2}b+\frac{3}{32}\alpha d\right) \widetilde{F}_{1}^{2}\left[ 
\widetilde{F}_{1},H\right] }+\underset{----------}{\frac{3}{4}\alpha H%
\widetilde{F}_{1}\left[ \widetilde{F}_{1},x\right] }+\underset{===========}{%
\frac{3}{4}\alpha x\widetilde{F}_{1}\left[ \widetilde{F}_{1},H\right] } 
\notag \\
&&+\underset{\symbol{94}\symbol{94}\symbol{94}\symbol{94}\symbol{94}\symbol{%
94}\symbol{94}\symbol{94}\symbol{94}\symbol{94}\symbol{94}}{\frac{3}{32}%
\alpha d\widetilde{F}_{1}^{2}\left[ H,\widetilde{F}_{1}\right] }+\left( 
\frac{1}{4}bd+\frac{3}{64}\alpha d^{2}\right) H\left[ H,\widetilde{F}%
_{1}\right] +\underset{++++++++}{\frac{3}{8}\alpha dH\left[ H,x\right] }, 
\notag
\end{eqnarray}
\begin{eqnarray}
\left[ x,G_{4}\right]  &=&\left[ x,\frac{3}{4}\alpha H\widetilde{F}%
_{1}^{2}+\left( \frac{3}{8}\alpha d+b\right) H^{2}\right] 
\label{Row_lem_deg_5_H_4} \\
&=&\underset{--+--+--}{\frac{3}{4}\alpha \widetilde{F}_{1}^{2}\left[
x,H\right] }+\underset{----------}{\frac{3}{2}\alpha H\widetilde{F}%
_{1}\left[ x,\widetilde{F}_{1}\right] }+\underset{++++++++++++++++}{\left( 
\frac{3}{4}\alpha d+2b\right) H\left[ x,H\right] }.  \notag
\end{eqnarray}
Notice that: 
\begin{equation}
\underset{------------------------------}{\frac{3}{4}\alpha H\widetilde{F}%
_{1}\left[ \widetilde{F}_{1},x\right] +\frac{3}{4}\alpha H\widetilde{F}%
_{1}\left[ \widetilde{F}_{1},x\right] +\frac{3}{2}\alpha H\widetilde{F}%
_{1}\left[ x,\widetilde{F}_{1}\right] }=0,  \label{Row_lem_deg_5_H_5}
\end{equation}
\begin{equation}
\underset{======================}{\frac{3}{4}\alpha x\widetilde{F}_{1}\left[
H,\widetilde{F}_{1}\right] +\frac{3}{4}\alpha x\widetilde{F}_{1}\left[ 
\widetilde{F}_{1},H\right] }=0,  \label{Row_lem_deg_5_H_6}
\end{equation}
\begin{equation}
\underset{--+--+--+--+--+--+--}{\frac{3}{4}\alpha \widetilde{F}%
_{1}^{2}\left[ H,x\right] +\frac{3}{4}\alpha \widetilde{F}_{1}^{2}\left[
x,H\right] }=0,  \label{Row_lem_deg_5_H_7}
\end{equation}
\begin{eqnarray}
&&\underset{++++++++++++++++++++++}{\frac{3}{8}\alpha dH\left[ H,x\right]
+\left( \frac{3}{4}\alpha d+2b\right) H\left[ x,H\right] }
\label{Row_lem_deg_5_H_8} \\
&=&-\left[ H,\left( \frac{3}{8}\alpha d+2b\right) Hx\right] ,  \notag
\end{eqnarray}
\begin{eqnarray}
&&\underset{\symbol{94}\symbol{94}\symbol{94}\symbol{94}\symbol{94}\symbol{94%
}\symbol{94}\symbol{94}\symbol{94}\symbol{94}\symbol{94}\symbol{94}\symbol{94%
}\symbol{94}\symbol{94}\symbol{94}\symbol{94}\symbol{94}\symbol{94}\symbol{94%
}\symbol{94}\symbol{94}\symbol{94}\symbol{94}\symbol{94}\symbol{94}\symbol{94%
}\symbol{94}\symbol{94}\symbol{94}\symbol{94}\symbol{94}\symbol{94}\symbol{94%
}\symbol{94}\symbol{94}\symbol{94}\symbol{94}\symbol{94}\symbol{94}\symbol{94%
}\symbol{94}\symbol{94}\symbol{94}}{-\frac{1}{2}b\widetilde{F}_{1}^{2}\left[
H,\widetilde{F}_{1}\right] +\left( \frac{1}{2}b+\frac{3}{32}\alpha d\right) 
\widetilde{F}_{1}^{2}\left[ \widetilde{F}_{1},H\right] +\frac{3}{32}\alpha d%
\widetilde{F}_{1}^{2}\left[ H,\widetilde{F}_{1}\right] }
\label{Row_lem_deg_5_H_9} \\
&=&\left[ H,-\frac{1}{6}b\widetilde{F}_{1}^{3}\right] -\left[ H,\left( \frac{%
1}{6}b+\frac{1}{32}\alpha d\right) \widetilde{F}_{1}^{3}\right] +\left[ H,%
\frac{1}{32}\alpha d\widetilde{F}_{1}^{3}\right]   \notag \\
&=&\left[ H,-\frac{1}{3}b\widetilde{F}_{1}^{3}\right]   \notag
\end{eqnarray}
By (\ref{Row_lem_deg_5_H_1})-(\ref{Row_lem_deg_5_H_9}) we have 
\begin{equation*}
\left[ H,2Hz-AH\widetilde{F}_{1}-\frac{1}{3}b\widetilde{F}_{1}^{3}+\left( 
\frac{1}{4}bd+\frac{3}{64}\alpha d^{2}\right) H\widetilde{F}_{1}-\left( 
\frac{3}{8}\alpha d+2b\right) Hx\right] =0.
\end{equation*}
Since $2Hz-AH\widetilde{F}_{1}-\frac{1}{3}b\widetilde{F}_{1}^{3}+\left( 
\frac{1}{4}bd+\frac{3}{64}\alpha d^{2}\right) H\widetilde{F}_{1}-\left( 
\frac{3}{8}\alpha d+2b\right) Hx\in \Bbb{C}[x,y,z]_{3},$ we conclude that 
\begin{equation}
2Hz-AH\widetilde{F}_{1}-\frac{1}{3}b\widetilde{F}_{1}^{3}+\left( \frac{1}{4}%
bd+\frac{3}{64}\alpha d^{2}\right) H\widetilde{F}_{1}-\left( \frac{3}{8}%
\alpha d+2b\right) Hx=0.  \label{Row_lem_deg_5_H_10}
\end{equation}
Since $H|2Hz-AH\widetilde{F}_{1}+\left( \frac{1}{4}bd+\frac{3}{64}\alpha
d^{2}\right) H\widetilde{F}_{1}-\left( \frac{3}{8}\alpha d+2b\right) Hx,$ we
see that $b=0$ or $H|\widetilde{F}_{1}^{3}.$ But $H|\widetilde{F}_{1}^{3}$
means that $H$ is not squarefree. Thus $b=0.$ So (\ref{Row_lem_deg_5_H_10})
can be rewritten as 
\begin{equation*}
2Hz-AH\widetilde{F}_{1}+\frac{3}{64}\alpha d^{2}H\widetilde{F}_{1}-\frac{3}{8%
}\alpha dHx=0.
\end{equation*}
Thus 
\begin{eqnarray*}
z &=&\frac{1}{2}A\widetilde{F}_{1}-\frac{3}{128}\alpha d^{2}\widetilde{F}%
_{1}+\frac{3}{16}\alpha dx \\
&=&\left( \frac{3}{256}\alpha d^{2}+\frac{1}{4}c\right) \widetilde{F}_{1}-%
\frac{3}{128}\alpha d^{2}\widetilde{F}_{1}+\frac{3}{16}\alpha dx \\
&=&\left( -\frac{3}{256}\alpha d^{2}+\frac{1}{4}c\right) \widetilde{F}_{1}+%
\frac{3}{16}\alpha dx.
\end{eqnarray*}
The formulas for $G_{4},G_{3}$ and $G_{2}$ are obtained by substituting $%
b=0\,$in the formulas from Lemma \ref{Lem_deg_6_H}.
\end{proof}

Now we are in a position to prove

\begin{theorem}
There is no pair of polynomials $F,G$ of the form 
\begin{eqnarray*}
F &=&x+F_{2}+F_{3}+F_{4},\qquad F_{4}\neq 0, \\
G &=&z+G_{2}+\cdots +G_{6},\qquad G_{6}\neq 0,
\end{eqnarray*}
where $F_{4},G_{6}$ are given by the formulas of Lemma \ref{Lem_deg_10}(1),
such that $\deg \left[ F,G\right] <4.$
\end{theorem}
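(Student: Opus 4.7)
\emph{Plan.} Assume for contradiction that such a pair $(F, G)$ exists with $\deg[F, G] < 4$. Lemmas \ref{Lem_deg_9_H}--\ref{Lem_deg_5_H} then pin down every homogeneous component of $F$ and $G$ in terms of the squarefree quadratic $H$, a single linear form $\widetilde{F}_1$, and the scalars $\alpha, c, d$ (with $b = 0$ and with the prescribed values of $A$ and $M$). The linear relation $z = M\widetilde{F}_1 + \tfrac{3}{16}\alpha d\, x$ from Lemma \ref{Lem_deg_5_H} forces $\widetilde{F}_1 \in \Bbb{C}x + \Bbb{C}z$; writing $\widetilde{F}_1 = a x + c z$ and matching coefficients of $x$ and $z$ in that relation gives $Mc = 1$ (so $c \neq 0$) and $a = -\tfrac{3}{16}\alpha d c$.

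Next I push the $H$-reduction machinery one step further, using that the degree-$4$ component of $[F, G]$ must vanish:
\[
E_4 := [F_3, z] + [F_2, G_2] + [x, G_3] = 0.
\]
Expanding each bracket by the Leibniz rule and cancelling antisymmetric pairs exactly as in the preceding lemmas, I then exploit two identities linking the parameters. First, $a + \tfrac{3}{16}c\alpha d = 0$, together with $[x, \widetilde{F}_1] = c[x, z]$, kills the surviving $H[x, z]$-type terms. Second, $M + A/2 = c/2$ (an elementary consequence of the formulas for $M$ and $A$ with $b = 0$) combines the $[H, \widetilde{F}_1]$-type terms. After rewriting $\tfrac{3\alpha}{2}\, x[x, H] = -\tfrac{3\alpha}{4}[H, x^2]$, every remaining term lies in the image of $[H, \cdot]$, and one obtains the clean identity $E_4 = [H, W]$ with
\[
W := z\widetilde{F}_1 - \tfrac{c}{4}\widetilde{F}_1^2 - \tfrac{3\alpha}{4} x^2.
\]

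Substituting the relation $z = M\widetilde{F}_1 + \tfrac{3}{16}\alpha d\, x$ back into $W$ and using $M - c/4 = -\tfrac{3}{256}\alpha d^2$ then reduces $W$ to the perfect square $-\tfrac{3\alpha}{256}(d\widetilde{F}_1 - 8x)^2$. Because $d\widetilde{F}_1 - 8x = (ad - 8)x + cd\, z$ is nonzero (the coefficient of $x$ is $-8$ when $d = 0$, while the coefficient of $z$ is $cd \neq 0$ when $d \neq 0$), the polynomial $W$ is nonzero. But $[H, W] = 0$ with $W \in \Bbb{C}[x, y, z]_2$ forces, by Lemma \ref{Lemma_H_reduction}, $W = \beta H$ for some $\beta \in \Bbb{C}^*$; hence $H$ is a scalar multiple of the square of a linear form, contradicting the squarefreeness guaranteed by Lemma \ref{Lem_deg_10}(1). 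The main obstacle is the bookkeeping in the middle step---verifying that the Leibniz expansions of the three brackets assemble into the single $[H, W]$ above---since this requires both identities $a + \tfrac{3}{16}c\alpha d = 0$ and $M + A/2 = c/2$ to be used in concert; once $[H, W] = 0$ is in hand, the factorization of $W$ as a square delivers the contradiction immediately.
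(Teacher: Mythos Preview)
Your argument is correct and follows essentially the same route as the paper: expand the degree-$4$ component $E_4=[F_3,z]+[F_2,G_2]+[x,G_3]$, collect everything into the form $[H,W]$ for a homogeneous quadratic $W$, apply Lemma~\ref{Lemma_H_reduction}, and observe that $W$ is a nonzero scalar times a perfect square, contradicting the squarefreeness of $H$. Your $W=z\widetilde F_1-\tfrac{c}{4}\widetilde F_1^2-\tfrac{3\alpha}{4}x^2$ is literally equal to the paper's expression $(\tfrac{1}{2}M-\tfrac{1}{4}A)\widetilde F_1^2+\tfrac{3}{16}\alpha d\,x\widetilde F_1-\tfrac{3}{4}\alpha x^2$ once one uses $z=M\widetilde F_1+\tfrac{3}{16}\alpha d\,x$ and $2M+A=c$; and your factorisation $W=-\tfrac{3\alpha}{256}(d\widetilde F_1-8x)^2$ is the paper's $-\tfrac{3}{4}\alpha(\tfrac{1}{8}d\widetilde F_1-x)^2$. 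The only organisational difference is that the paper substitutes $z=M\widetilde F_1+\tfrac{3}{16}\alpha d\,x$ at the outset (so no $[x,z]$ terms ever appear), whereas you keep $z$ and then cancel the $H[x,z]$ contributions using the coefficient identity coming from that same relation; and in the endgame the paper deduces $e=0$ and $\tfrac{1}{8}d\widetilde F_1-x=0$ (hence $\widetilde F_1\in\Bbb C x$, contradicting the $z$-relation), while you argue directly that $W\neq 0$ forces $H$ to be a square.

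One genuine defect to fix: you introduce the coefficients of $\widetilde F_1$ as ``$a$'' and ``$c$'' immediately after invoking ``the scalars $\alpha,c,d$'' from Lemma~\ref{Lem_deg_6_H}. The letter $c$ is thus used for two different quantities in the same paragraph, and the identity $M+A/2=c/2$ holds for the Lemma~\ref{Lem_deg_6_H} constant, not for your $z$-coefficient. Rename the coefficients of $\widetilde F_1$ (say $\widetilde F_1=p\,x+q\,z$, so $Mq=1$ and $p+\tfrac{3}{16}\alpha d\,q=0$); with that change the write-up is clean.
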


\begin{proof}
Assume that there exists such a pair. Then 
\begin{equation}
\left[ F_{3},z\right] +\left[ F_{2},G_{2}\right] +\left[ x,G_{3}\right] =0.
\label{Row_lem_deg_4_H_1}
\end{equation}
By Lemmas \ref{Lem_deg_8_H} and \ref{Lem_deg_5_H}, 
\begin{eqnarray}
\left[ F_{3},z\right] &=&\left[ H\widetilde{F}_{1},M\widetilde{F}_{1}+\frac{3%
}{16}\alpha dx\right]  \label{Row_lem_deg_4_H_2} \\
&=&H\left[ \widetilde{F}_{1},M\widetilde{F}_{1}+\frac{3}{16}\alpha dx\right]
+\widetilde{F}_{1}\left[ H,M\widetilde{F}_{1}+\frac{3}{16}\alpha dx\right] 
\notag \\
&=&\underset{--+--+--+--}{\frac{3}{16}\alpha dH\left[ \widetilde{F}%
_{1},x\right] }+M\widetilde{F}_{1}\left[ H,\widetilde{F}_{1}\right] +%
\underset{==========}{\frac{3}{16}\alpha d\widetilde{F}_{1}\left[ H,x\right] 
},  \notag
\end{eqnarray}
and by Lemmas \ref{Lem_deg_6_H} and \ref{Lem_deg_5_H}, 
\begin{eqnarray}
\left[ F_{2},G_{2}\right] &=&\left[ \frac{1}{4}\widetilde{F}_{1}^{2}+\frac{1%
}{4}dH,AH+\frac{3}{4}\alpha x\widetilde{F}_{1}\right]
\label{Row_lem_deg_4_H_3} \\
&=&\frac{1}{2}\widetilde{F}_{1}\left[ \widetilde{F}_{1},AH+\frac{3}{4}\alpha
x\widetilde{F}_{1}\right]  \notag \\
&&+\frac{1}{4}d\left[ H,AH+\frac{3}{4}\alpha x\widetilde{F}_{1}\right] 
\notag \\
&=&\frac{1}{2}A\widetilde{F}_{1}\left[ \widetilde{F}_{1},H\right] +\underset{%
---------}{\frac{3}{8}\alpha \widetilde{F}_{1}^{2}\left[ \widetilde{F}%
_{1},x\right] }  \notag \\
&&+\left[ H,\frac{3}{16}\alpha dx\widetilde{F}_{1}\right] ,  \notag
\end{eqnarray}
\begin{eqnarray}
\left[ x,G_{3}\right] &=&\left[ x,\frac{1}{8}\alpha \widetilde{F}_{1}^{3}+%
\frac{3}{16}\alpha dH\widetilde{F}_{1}+\frac{3}{2}\alpha Hx\right]
\label{Row_lem_deg_4_H_4} \\
&=&\underset{---------}{\frac{3}{8}\alpha \widetilde{F}_{1}^{2}\left[ x,%
\widetilde{F}_{1}\right] }+\underset{==========}{\frac{3}{16}\alpha d%
\widetilde{F}_{1}\left[ x,H\right] }  \notag \\
&&+\underset{--+--+--+--}{\frac{3}{16}\alpha dH\left[ x,\widetilde{F}%
_{1}\right] }+\frac{3}{2}\alpha x\left[ x,H\right] .  \notag
\end{eqnarray}
Notice that: 
\begin{equation}
\underset{------------------}{\frac{3}{8}\alpha \widetilde{F}_{1}^{2}\left[ 
\widetilde{F}_{1},x\right] +\frac{3}{8}\alpha \widetilde{F}_{1}^{2}\left[ x,%
\widetilde{F}_{1}\right] }=0,  \label{Row_lem_deg_4_H_5}
\end{equation}
\begin{equation}
\underset{====================}{\frac{3}{16}\alpha d\widetilde{F}_{1}\left[
H,x\right] +\frac{3}{16}\alpha d\widetilde{F}_{1}\left[ x,H\right] }=0,
\label{Row_lem_deg_4_H_6}
\end{equation}
\begin{equation}
\underset{--+--+--+--+--+--+--+--}{\frac{3}{16}\alpha dH\left[ \widetilde{F}%
_{1},x\right] +\frac{3}{16}\alpha dH\left[ x,\widetilde{F}_{1}\right] }=0.
\label{Row_lem_deg_4_H_7}
\end{equation}
By (\ref{Row_lem_deg_4_H_1})-(\ref{Row_lem_deg_4_H_7}) we have 
\begin{equation*}
\left[ H,\frac{1}{2}M\widetilde{F}_{1}^{2}-\frac{1}{4}A\widetilde{F}_{1}^{2}+%
\frac{3}{16}\alpha dx\widetilde{F}_{1}-\frac{3}{4}\alpha x^{2}\right] =0.
\end{equation*}
Since $\frac{1}{2}M\widetilde{F}_{1}^{2}-\frac{1}{4}A\widetilde{F}_{1}^{2}+%
\frac{3}{16}\alpha dx\widetilde{F}_{1}-\frac{3}{4}\alpha x^{2}\in \Bbb{C}%
[x,y,z]_{2},$ there is $e\in \Bbb{C}$ such that 
\begin{equation}
\frac{1}{2}M\widetilde{F}_{1}^{2}-\frac{1}{4}A\widetilde{F}_{1}^{2}+\frac{3}{%
16}\alpha dx\widetilde{F}_{1}-\frac{3}{4}\alpha x^{2}=eH.
\label{Row_lem_deg_4_H_8}
\end{equation}
We have (see Lemmas \ref{Lem_deg_6_H} and \ref{Lem_deg_5_H}) $\frac{1}{2}M-%
\frac{1}{4}A=-\frac{3}{256}\alpha d^{2}.$ Thus (\ref{Row_lem_deg_4_H_8}) can
be rewritten as 
\begin{equation*}
-\frac{3}{256}\alpha d^{2}\widetilde{F}_{1}^{2}+\frac{3}{16}\alpha dx%
\widetilde{F}_{1}-\frac{3}{4}\alpha x^{2}=eH
\end{equation*}
or equivalently 
\begin{equation*}
-\frac{3}{4}\alpha \left( \frac{1}{8}d\widetilde{F}_{1}-x\right) ^{2}=eH.
\end{equation*}
Since $H$ is squarefree and $\alpha \neq 0$, we see that $e=0$ and $\frac{1}{%
8}d\widetilde{F}_{1}-x=0.$ This means that $d\neq 0$ and $\widetilde{F}%
_{1}=8d^{-1}x.$ This contradicts $z=M\widetilde{F}_{1}+\frac{3}{16}\alpha
dx. $
\end{proof}

\section{The case of nonsquarefree $H$}

In this section we consider the situation of Lemma \ref{Lem_deg_10}(2).

\begin{lemma}
\label{Lem_deg_9} Let $\deg \left[ F,G\right] <9$ and let $\alpha $ and $h$
be as in Lemma \ref{Lem_deg_10}(2). Then there is $\beta \in \Bbb{C}$ such
that: 
\begin{equation*}
G_{5}=\frac{3}{2}\alpha h^{2}F_{3}+\beta h^{5}.
\end{equation*}
\end{lemma}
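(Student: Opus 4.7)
The approach mirrors the squarefree case (Lemma \ref{Lem_deg_9_H}): extract the degree-$9$ homogeneous component of $[F,G]$, express it as $[h,\cdot]$ of a homogeneous polynomial, and then apply Lemma \ref{Lemma_H_reduction} with $H=h$ (note that $h$, being homogeneous of degree $1$, is automatically squarefree and nonconstant).

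First I would observe that, since $\deg[F,G]<9$, the degree-$9$ part of $[F,G]$ vanishes, giving
\begin{equation*}
[F_{4},G_{5}] + [F_{3},G_{6}] = 0.
\end{equation*}
Substituting $F_{4}=h^{4}$ and $G_{6}=\alpha h^{6}$ and using the derivation property of the Poisson bracket, this becomes
\begin{equation*}
4h^{3}[h,G_{5}] + 6\alpha h^{5}[F_{3},h] \;=\; 4h^{3}[h,G_{5}] - 6\alpha h^{5}[h,F_{3}] \;=\; 0.
\end{equation*}
Next I would pull $h^{3}$ and $h^{5}$ inside the brackets, exploiting $[h,h^{k}]=kh^{k-1}[h,h]=0$, to obtain
\begin{equation*}
\bigl[h,\, 4h^{3}G_{5} - 6\alpha h^{5}F_{3}\bigr] = 0.
\end{equation*}

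The polynomial $4h^{3}G_{5} - 6\alpha h^{5}F_{3}$ is homogeneous of degree $8$, so Lemma \ref{Lemma_H_reduction} applied with $H=h$ yields a constant $a\in\mathbb{C}$ with
\begin{equation*}
4h^{3}G_{5} - 6\alpha h^{5}F_{3} = a h^{8}.
\end{equation*}
Because $\mathbb{C}[x,y,z]$ is a domain and $h^{3}$ divides both $6\alpha h^{5}F_{3}$ and $ah^{8}$, I can cancel $h^{3}$ to get
\begin{equation*}
4G_{5} - 6\alpha h^{2}F_{3} = a h^{5},
\end{equation*}
so setting $\beta = a/4$ gives the claimed formula $G_{5}=\tfrac{3}{2}\alpha h^{2}F_{3}+\beta h^{5}$.

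There is no serious obstacle here; the only delicate points are the bookkeeping in moving $h^{k}$ factors in and out of the Poisson bracket (which works because $[h,h]=0$), and noting that a degree-$1$ form is automatically squarefree so that Lemma \ref{Lemma_H_reduction} is applicable. The appearance of the extra term $\beta h^{5}$ (compared to the squarefree case, where the analogous constant was forced to be zero because $h^{5}$ was not a multiple of $H=h^{2}\cdot\text{something}$ of the right degree pattern) reflects the fact that here $\deg h = 1$ divides every integer, so Lemma \ref{Lemma_H_reduction} allows a nonzero constant $a$ in the conclusion.
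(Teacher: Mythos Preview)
Your proof is correct and follows essentially the same route as the paper's own argument: isolate the degree-$9$ component, rewrite it as $[h,4h^{3}G_{5}-6\alpha h^{5}F_{3}]=0$, then apply Lemma~\ref{Lemma_H_reduction} with $H=h$. Your bookkeeping is in fact slightly cleaner---you correctly note that $4h^{3}G_{5}-6\alpha h^{5}F_{3}$ is homogeneous of degree $8$ (the paper's text contains a harmless typo here), and your closing remark explaining why the extra $\beta h^{5}$ term survives in this case is a nice touch.
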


\begin{proof}
Since $\deg \left[ F,G\right] <9,$ it follows that 
\begin{equation*}
\left[ F_{4},G_{5}\right] +\left[ F_{3},G_{6}\right] =0.
\end{equation*}
By Lemma \ref{Lem_deg_10}(2), 
\begin{eqnarray*}
\left[ F_{4},G_{5}\right] +\left[ F_{3},G_{6}\right] &=&\left[
h^{4},G_{5}\right] +\left[ F_{3},\alpha h^{6}\right] \\
&=&4h^{3}\left[ h,G_{5}\right] +6\alpha h^{5}\left[ F_{3},h\right] \\
&=&\left[ h,4h^{3}G_{5}-6\alpha h^{5}F_{3}\right] .
\end{eqnarray*}
Thus $\left[ h,4h^{3}G_{5}-6\alpha h^{5}F_{3}\right] =0.$ Since $\deg h=1$
and $4h^{3}G_{5}-6\alpha h^{5}F_{3}\in \Bbb{C}[x,y,z]_{7},$ we conclude (by
Lemma \ref{Lemma_H_reduction}) that there exists $\Bbb{\beta \in C}$ such
that 
\begin{equation*}
4h^{3}G_{5}-6\alpha h^{5}F_{3}=4\beta h^{7}.
\end{equation*}
This gives the formula for $G_{5}.$
\end{proof}

\begin{lemma}
\label{Lem_deg_8} Let $\deg \left[ F,G\right] <8$ and let $\alpha ,\beta ,h$
be as in Lemma \ref{Lem_deg_9}. Then there is a homogeneous polynomial $%
\widetilde{F}_{2}$ of degree $2$ and $a\in \Bbb{C}$ such that: 
\begin{eqnarray*}
F_{3} &=&h\widetilde{F}_{2}, \\
G_{5} &=&\frac{3}{2}\alpha h^{3}\widetilde{F}_{2}+\beta h^{5}, \\
G_{4} &=&\frac{3}{8}\alpha \widetilde{F}_{2}^{2}+\frac{5}{4}\beta h^{2}%
\widetilde{F}_{2}+\frac{3}{2}\alpha h^{2}F_{2}+\frac{1}{4}ah^{4}.
\end{eqnarray*}
\end{lemma}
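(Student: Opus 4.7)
The plan is to mimic the pattern of Lemma~\ref{Lem_deg_8_H}, but with the formulas for $F_{4},G_{6},G_{5}$ coming from case~(2) of Lemma~\ref{Lem_deg_10} and from Lemma~\ref{Lem_deg_9}. Concretely, from $\deg[F,G]<8$ I extract the vanishing of the degree--$7$ homogeneous part of $[F,G]$, namely
\begin{equation*}
[F_{4},G_{4}]+[F_{3},G_{5}]+[F_{2},G_{6}]=0.
\end{equation*}
Substituting $F_{4}=h^{4}$, $G_{6}=\alpha h^{6}$, and $G_{5}=\tfrac{3}{2}\alpha h^{2}F_{3}+\beta h^{5}$, and applying the derivation rules for the Poisson bracket, I would expand each of the three terms into expressions involving only the basic brackets $[h,F_{2}]$, $[h,F_{3}]$, and $[h,G_{4}]$ (plus polynomial coefficients in $h$ and $F_{3}$).

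Next, I would collect everything into a single outer bracket $[h,\,\cdot\,]$. The point is that $h$ is an annihilating constant for the derivation $P\mapsto[h,P]$, so each piece rewrites as $[h,\text{something}]$: for example $4h^{3}[h,G_{4}]=[h,4h^{3}G_{4}]$, and $-3\alpha h F_{3}[h,F_{3}]=-\tfrac{3}{2}\alpha[h,hF_{3}^{2}]$ using $[h,F_{3}^{2}]=2F_{3}[h,F_{3}]$, and similarly for the remaining terms coming from $\beta h^{5}$ and $\alpha h^{6}$. This yields an identity of the form
\begin{equation*}
\Bigl[h,\;4h^{3}G_{4}-\tfrac{3}{2}\alpha h F_{3}^{2}-5\beta h^{4}F_{3}-6\alpha h^{5}F_{2}\Bigr]=0.
\end{equation*}
Since the expression inside is homogeneous of degree $7$ and $\deg h=1$ divides $7$, Lemma~\ref{Lemma_H_reduction} (or rather its corollary applied to homogeneous pieces) gives a scalar $a\in\mathbb{C}$ with
\begin{equation*}
4h^{3}G_{4}-\tfrac{3}{2}\alpha h F_{3}^{2}-5\beta h^{4}F_{3}-6\alpha h^{5}F_{2}=ah^{7}.
\end{equation*}

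From here the argument is divisibility: every term except $-\tfrac{3}{2}\alpha h F_{3}^{2}$ is visibly divisible by $h^{2}$, so $h\mid F_{3}^{2}$, hence $h\mid F_{3}$ (since $h$ is a linear form, it is prime). Write $F_{3}=h\widetilde{F}_{2}$ with $\widetilde{F}_{2}$ homogeneous of degree~$2$; plugging back and cancelling $h^{2}$ yields
\begin{equation*}
4G_{4}-\tfrac{3}{2}\alpha\widetilde{F}_{2}^{2}-5\beta h^{2}\widetilde{F}_{2}-6\alpha h^{2}F_{2}=ah^{4},
\end{equation*}
which rearranges to the desired formula for $G_{4}$. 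Substituting $F_{3}=h\widetilde{F}_{2}$ into the formula for $G_{5}$ from Lemma~\ref{Lem_deg_9} gives $G_{5}=\tfrac{3}{2}\alpha h^{3}\widetilde{F}_{2}+\beta h^{5}$, completing the claim.

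There is no serious obstacle here; the whole argument is a mechanical application of the $H$--reduction template. The only point requiring care is correctly tracking signs and the product rule when rewriting the three brackets as a single outer $[h,\,\cdot\,]$, together with the small divisibility step that extracts $F_{3}=h\widetilde{F}_{2}$ from $h\mid F_{3}^{2}$.
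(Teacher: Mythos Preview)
Your argument is correct and follows the paper's proof essentially line by line: the same degree-$8$ vanishing condition, the same rewriting as a single bracket $[h,\cdot]=0$, the same application of Lemma~\ref{Lemma_H_reduction} to produce the scalar $a$, and the same divisibility step extracting $\widetilde{F}_{2}$. One cosmetic slip: after substituting $F_{3}=h\widetilde{F}_{2}$ you are cancelling $h^{3}$ (not $h^{2}$) to reach the displayed degree-$4$ identity, though the equation you wrote down is correct.
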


\begin{proof}
Since $\deg \left[ F,G\right] <8,$ it follows that 
\begin{equation*}
\left[ F_{4},G_{4}\right] +\left[ F_{3},G_{5}\right] +\left[
F_{2},G_{6}\right] =0.
\end{equation*}
By Lemmas \ref{Lem_deg_10}(2) and \ref{Lem_deg_9}, 
\begin{eqnarray*}
&&\left[ F_{4},G_{4}\right] +\left[ F_{3},G_{5}\right] +\left[
F_{2},G_{6}\right] \\
&=&\left[ h^{4},G_{4}\right] +\left[ F_{3},\frac{3}{2}\alpha
h^{2}F_{3}+\beta h^{5}\right] +\left[ F_{2},\alpha h^{6}\right] \\
&=&4h^{3}\left[ h,G_{4}\right] +3\alpha hF_{3}\left[ F_{3},h\right] +5\beta
h^{4}\left[ F_{3},h\right] +6\alpha h^{5}\left[ F_{2},h\right] \\
&=&\left[ h,4h^{3}G_{4}\right] +\left[ \frac{3}{2}\alpha hF_{3}^{2},h\right]
+\left[ 5\beta h^{4}F_{3},h\right] +\left[ 6\alpha h^{5}F_{2},h\right] \\
&=&\left[ h,4h^{3}G_{4}-\frac{3}{2}\alpha hF_{3}^{2}-5\beta
h^{4}F_{3}-6\alpha h^{5}F_{2}\right] .
\end{eqnarray*}
Thus $\left[ h,4h^{3}G_{4}-\frac{3}{2}\alpha hF_{3}^{2}-5\beta
h^{4}F_{3}-6\alpha h^{5}F_{2}\right] =0.$ By Lemma \ref{Lemma_H_reduction}
there is $a\in \Bbb{C}$ such that 
\begin{equation}
4h^{3}G_{4}-\frac{3}{2}\alpha hF_{3}^{2}-5\beta h^{4}F_{3}-6\alpha
h^{5}F_{2}=ah^{7}.  \label{Row_G4_1}
\end{equation}
Since $h|4h^{3}G_{4}-5\beta h^{4}F_{3}-6\alpha h^{5}F_{2}$ and $\alpha \neq
0,$ we see that $h^{3}|hF_{3}^{2}.\,$Thus $h|F_{3},$ and so there is a
homogeneous polynomial $\widetilde{F}_{2}$ of degree $2$ such that $F_{3}=h%
\widetilde{F}_{2}.$ By the last equality and (\ref{Row_G4_1}) we have 
\begin{equation*}
4h^{3}G_{4}-\frac{3}{2}\alpha h^{3}\widetilde{F}_{2}^{2}-5\beta h^{5}%
\widetilde{F}_{2}-6\alpha h^{5}F_{2}=ah^{7}.
\end{equation*}
This gives the formula for $G_{4}.$ The formula for $G_{5}$ is obtained by
substituting $F_{3}=h\widetilde{F}_{2}$ in the formula from Lemma \ref
{Lem_deg_9}
\end{proof}

\begin{lemma}
\label{Lem_deg_7} Let $\deg \left[ F,G\right] <7$ and let $\alpha ,\beta
,a,h,\widetilde{F}_{2}$ be as in Lemma \ref{Lem_deg_8}. Then there is a
homogeneous polynomial $\widetilde{F}_{1}$ of degree $1$ and $c\in \Bbb{C}$
such that 
\begin{eqnarray*}
F_{3} &=&h^{2}\widetilde{F}_{1}, \\
G_{5} &=&\frac{3}{2}\alpha h^{4}\widetilde{F}_{1}+\beta h^{5}, \\
G_{4} &=&\frac{3}{8}\alpha h^{2}\widetilde{F}_{1}^{2}+\frac{5}{4}\beta h^{3}%
\widetilde{F}_{1}+\frac{3}{2}\alpha h^{2}F_{2}+\frac{1}{4}ah^{4}, \\
G_{3} &=&\frac{5}{32}\beta h\widetilde{F}_{1}^{2}+\frac{1}{4}ah^{2}%
\widetilde{F}_{1}-\frac{1}{16}\alpha \widetilde{F}_{1}^{3}+\frac{5}{4}\beta
hF_{2}+\frac{3}{2}\alpha h^{2}x+\frac{3}{4}\alpha F_{2}\widetilde{F}_{1}+%
\frac{1}{4}ch^{3}.
\end{eqnarray*}
\end{lemma}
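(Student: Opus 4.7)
The plan is to follow the same strategy as in Lemmas \ref{Lem_deg_8_H}, \ref{Lem_deg_7_H} and \ref{Lem_deg_9}, \ref{Lem_deg_8}: extract the homogeneous component of $[F,G]$ of degree $7$, which must vanish because $\deg[F,G] < 7$, and then rewrite the resulting identity as a single Poisson bracket $[h, X] = 0$. Concretely, the vanishing of the degree-$7$ part gives
\begin{equation*}
[F_4, G_3] + [F_3, G_4] + [F_2, G_5] + [x, G_6] = 0.
\end{equation*}
Substituting $F_4 = h^4$, $F_3 = h\widetilde{F}_2$, $G_6 = \alpha h^6$ and the formulas for $G_4, G_5$ from Lemma \ref{Lem_deg_8}, and using the derivation property of the Poisson bracket together with the trivial identity $[h, h^k] = 0$, I would expand each term by repeatedly pulling out factors of $h$ and replacing $h^k[u, h]$ by $[h, -h^k u]$ (and similarly on the other side). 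After grouping, the terms involving brackets $[\widetilde{F}_2, F_2]$ and their mirror $[F_2, \widetilde{F}_2]$ cancel in pairs exactly as in Lemma \ref{Lem_deg_7_H}.

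The outcome will be an identity of the form $[h, X] = 0$, where $X$ is a homogeneous polynomial of degree $6$ consisting of $4h^3 G_3$ plus explicit combinations of $\widetilde{F}_2^3$, $h^2\widetilde{F}_2 F_2$, $h^3 x$, $h^2\widetilde{F}_2^2 \beta$, $h^4 \widetilde{F}_2$, etc. By Lemma \ref{Lemma_H_reduction} (applied with the squarefree polynomial $h$ of degree $1$, which divides every degree) we get $X = 4c h^6$ for some $c \in \mathbb{C}$. Dividing by $h^3$ then solves for $G_3$, provided we first show that every term in the resulting right-hand side is divisible by $h^3$. The only contribution that is not manifestly divisible by $h^3$ comes from the piece $-\tfrac{3}{8}\alpha h \widetilde{F}_2^3$ (inherited from the expansion of $[F_3, G_4]$). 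This forces $h \mid \widetilde{F}_2$, whence $\widetilde{F}_2 = h\widetilde{F}_1$ for a degree-$1$ form $\widetilde{F}_1$; the claimed formulas for $F_3$, $G_5$, $G_4$ follow by direct substitution into Lemmas \ref{Lem_deg_9} and \ref{Lem_deg_8}.

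Once the divisibility step gives $\widetilde{F}_2 = h\widetilde{F}_1$, the identity $4h^3 G_3 = 4c h^6 + (\text{explicit stuff})$ simplifies to the stated expression for $G_3$: the $-\tfrac{3}{8}\alpha h \widetilde{F}_2^3$ term becomes $-\tfrac{1}{4}\alpha h^4 \widetilde{F}_1^3$, giving the $-\tfrac{1}{16}\alpha \widetilde{F}_1^3$ summand; the $\tfrac{5}{4}\beta h^2 \widetilde{F}_2^2$ term (arising from the $\beta h^5$ piece of $G_5$ and the $\tfrac{5}{4}\beta h^3\widetilde{F}_2$ piece of $G_4$) collapses to $\tfrac{5}{32}\beta h \widetilde{F}_1^2$; and the remaining terms produce the $\tfrac{1}{4}a h^2 \widetilde{F}_1$, $\tfrac{5}{4}\beta h F_2$, $\tfrac{3}{2}\alpha h^2 x$, $\tfrac{3}{4}\alpha F_2 \widetilde{F}_1$ summands.

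The expected main obstacle is purely bookkeeping: the presence of the extra $\beta h^5$ summand in $G_5$ (absent in the squarefree-$H$ analogue, Lemma \ref{Lem_deg_7_H}) produces additional cross terms when expanding $[F_3, G_5]$ and $[F_2, G_5]$, and these have to be carefully matched against contributions coming from $[F_3, G_4]$ so that all non-trivial $\beta$-dependent pieces combine into a single $[h, \cdot]$ bracket before Lemma \ref{Lemma_H_reduction} can be invoked. No genuine conceptual difficulty is anticipated; the only non-computational input is the divisibility argument that upgrades $F_3 = h\widetilde{F}_2$ to $F_3 = h^2\widetilde{F}_1$, which as above is forced by the single non-$h^3$-divisible summand $\tfrac{3}{8}\alpha h\widetilde{F}_2^3$.
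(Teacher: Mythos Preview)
Your proposal is correct and follows essentially the same route as the paper: extract the degree-$7$ component, rewrite it as $[h,X]=0$ for a homogeneous $X$ of degree $6$, apply Lemma~\ref{Lemma_H_reduction} to get $X=ch^{6}$, and use the sole term not divisible by $h$ (namely $\tfrac{1}{4}\alpha\widetilde{F}_{2}^{3}$, not $-\tfrac{3}{8}\alpha h\widetilde{F}_{2}^{3}$ as you wrote) to force $h\mid\widetilde{F}_{2}$. Apart from a few harmless coefficient slips and the typo ``$[F_{3},G_{5}]$'' for ``$[F_{3},G_{4}]$'' in your last paragraph, the argument matches the paper's proof.
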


\begin{proof}
Since $\deg \left[ F,G\right] <7,$ it follows that 
\begin{equation}
\left[ F_{4},G_{3}\right] +\left[ F_{3},G_{4}\right] +\left[
F_{2},G_{5}\right] +\left[ x,G_{6}\right] =0.  \label{Row_lem_deg7_1}
\end{equation}
By Lemma \ref{Lem_deg_10}(2), 
\begin{equation}
\left[ F_{4},G_{3}\right] =\left[ h^{4},G_{3}\right] =4h^{3}\left[
h,G_{3}\right] =\left[ h,4h^{3}G_{3}\right]  \label{Row_lem_deg7_2}
\end{equation}
and 
\begin{equation}
\left[ x,G_{6}\right] =\left[ x,\alpha h^{6}\right] =6\alpha h^{5}\left[
x,h\right] =\left[ h,-6\alpha h^{5}x\right] .  \label{Row_lem_deg7_3}
\end{equation}
And by Lemma \ref{Lem_deg_8}, 
\begin{eqnarray}
&&\left[ F_{3},G_{4}\right]  \label{Row_lem_deg_7_3_a} \\
&=&\left[ h\widetilde{F}_{2},\frac{3}{8}\alpha \widetilde{F}_{2}^{2}+\frac{5%
}{4}\beta h^{2}\widetilde{F}_{2}+\frac{3}{2}\alpha h^{2}F_{2}+\frac{1}{4}%
ah^{4}\right]  \notag \\
&=&h\left[ \widetilde{F}_{2},\frac{3}{8}\alpha \widetilde{F}_{2}^{2}+\frac{5%
}{4}\beta h^{2}\widetilde{F}_{2}+\frac{3}{2}\alpha h^{2}F_{2}+\frac{1}{4}%
ah^{4}\right] +  \notag \\
&&\widetilde{F}_{2}\left[ h,\frac{3}{8}\alpha \widetilde{F}_{2}^{2}+\frac{5}{%
4}\beta h^{2}\widetilde{F}_{2}+\frac{3}{2}\alpha h^{2}F_{2}+\frac{1}{4}%
ah^{4}\right]  \notag \\
&=&\frac{5}{2}\beta h^{2}\widetilde{F}_{2}\left[ \widetilde{F}_{2},h\right]
+3\alpha h^{2}F_{2}\left[ \widetilde{F}_{2},h\right] +\frac{3}{2}\alpha
h^{3}\left[ \widetilde{F}_{2},F_{2}\right] +  \notag \\
&&ah^{4}\left[ \widetilde{F}_{2},h\right] +\frac{3}{4}\alpha \widetilde{F}%
_{2}^{2}\left[ h,\widetilde{F}_{2}\right] +\frac{5}{4}\beta h^{2}\widetilde{F%
}_{2}\left[ h,\widetilde{F}_{2}\right] +\frac{3}{2}\alpha h^{2}\widetilde{F}%
_{2}\left[ h,F_{2}\right]  \notag \\
&=&\underset{============}{-\left[ h,\frac{5}{4}\beta h^{2}\widetilde{F}%
_{2}^{2}\right] }+\underset{--+--+--+--}{3\alpha h^{2}F_{2}\left[ \widetilde{%
F}_{2},h\right] }+\underset{----------}{\frac{3}{2}\alpha h^{3}\left[ 
\widetilde{F}_{2},F_{2}\right] }  \notag \\
&&-\left[ h,ah^{4}\widetilde{F}_{2}\right] +\left[ h,\frac{1}{4}\alpha 
\widetilde{F}_{2}^{3}\right] +\underset{========}{\left[ h,\frac{5}{8}\beta
h^{2}\widetilde{F}_{2}^{2}\right] }+\underset{--+--+--+--}{\frac{3}{2}\alpha
h^{2}\widetilde{F}_{2}\left[ h,F_{2}\right] },  \notag
\end{eqnarray}
and 
\begin{eqnarray}
\left[ F_{2},G_{5}\right] &=&\left[ F_{2},\frac{3}{2}\alpha h^{3}\widetilde{F%
}_{2}+\beta h^{5}\right]  \label{Row_lem_deg_7_3_b} \\
&=&\frac{3}{2}\alpha h^{3}\left[ F_{2},\widetilde{F}_{2}\right] +\frac{9}{2}%
\alpha h^{2}\widetilde{F}_{2}\left[ F_{2},h\right] +5\beta h^{4}\left[
F_{2},h\right]  \notag \\
&=&\underset{---------}{\frac{3}{2}\alpha h^{3}\left[ F_{2},\widetilde{F}%
_{2}\right] }+\underset{--+--+--+--}{\frac{9}{2}\alpha h^{2}\widetilde{F}%
_{2}\left[ F_{2},h\right] }-\left[ h,5\beta h^{4}F_{2}\right] .  \notag
\end{eqnarray}
Notice that: 
\begin{equation}
\underset{-------------------}{\frac{3}{2}\alpha h^{3}\left[ \widetilde{F}%
_{2},F_{2}\right] +\frac{3}{2}\alpha h^{3}\left[ F_{2},\widetilde{F}%
_{2}\right] }=0,  \label{Row_lem_deg_7_3_c}
\end{equation}
\begin{equation}
\underset{==================}{-\left[ h,\frac{5}{4}\beta h^{2}\widetilde{F}%
_{2}^{2}\right] +\left[ h,\frac{5}{8}\beta h^{2}\widetilde{F}_{2}^{2}\right] 
}=\left[ h,-\frac{5}{8}\beta h^{2}\widetilde{F}_{2}^{2}\right] ,
\label{Row_lem_deg_7_3_d}
\end{equation}
and 
\begin{eqnarray}
&&\underset{--+--+--+--+--+--+--+--+--}{3\alpha h^{2}F_{2}\left[ \widetilde{F%
}_{2},h\right] +\frac{3}{2}\alpha h^{2}\widetilde{F}_{2}\left[
h,F_{2}\right] +\frac{9}{2}\alpha h^{2}\widetilde{F}_{2}\left[
F_{2},h\right] }  \label{Row_lem_deg_7_4} \\
&=&3\alpha h^{2}\left( F_{2}\left[ \widetilde{F}_{2},h\right] +\widetilde{F}%
_{2}\left[ F_{2},h\right] \right) =-\left[ h,3\alpha h^{2}F_{2}\widetilde{F}%
_{2}\right] .  \notag
\end{eqnarray}
By (\ref{Row_lem_deg7_1})-(\ref{Row_lem_deg_7_4}) we have 
\begin{equation*}
\left[ h,4h^{3}G_{3}-6\alpha h^{5}x-ah^{4}\widetilde{F}_{2}+\frac{1}{4}%
\alpha \widetilde{F}_{2}^{3}-5\beta h^{4}F_{2}-\frac{5}{8}\beta h^{2}%
\widetilde{F}_{2}^{2}-3\alpha h^{2}F_{2}\widetilde{F}_{2}\right] =0.
\end{equation*}
Thus there is $c\in \Bbb{C}$ such that 
\begin{gather}
4h^{3}G_{3}-6\alpha h^{5}x-ah^{4}\widetilde{F}_{2}+\frac{1}{4}\alpha 
\widetilde{F}_{2}^{3}  \label{Row_lem_deg7_5} \\
-5\beta h^{4}F_{2}-\frac{5}{8}\beta h^{2}\widetilde{F}_{2}^{2}-3\alpha
h^{2}F_{2}\widetilde{F}_{2}=ch^{6}.  \notag
\end{gather}
Since $h|4h^{3}G_{3}-6\alpha h^{5}x-ah^{4}\widetilde{F}_{2}-5\beta
h^{4}F_{2}-\frac{5}{8}\beta h^{2}\widetilde{F}_{2}^{2}-3\alpha h^{2}F_{2}%
\widetilde{F}_{2}$ and $\alpha \neq 0,$ we see that $h|\widetilde{F}_{2}.$
Thus there is a homogeneous polynomial $\widetilde{F}_{1}$ of degree $1$
such that 
\begin{equation}
\widetilde{F}_{2}=h\widetilde{F}_{1}.  \label{Row_lem_deg7_6}
\end{equation}
Now, Lemma \ref{Lem_deg_8} and (\ref{Row_lem_deg7_6}) give the formulas for $%
F_{3},$ $G_{5}$ and $G_{4},$ and (\ref{Row_lem_deg7_5})-(\ref{Row_lem_deg7_6}%
) give the formula for $G_{3}.$
\end{proof}

\begin{lemma}
\label{Lem_deg_6}Let $\deg \left[ F,G\right] <6,$ and let $\alpha ,\beta
,a,c,h,\widetilde{F}_{1}$ be as in Lemma \ref{Lem_deg_7}.\newline
(1) If $\beta =0,$ then there is a homogeneous polynomial $\widehat{F}_{1}$
of degree $1$ and $d\in \Bbb{C}$ such that 
\begin{eqnarray*}
F_{2} &=&\frac{1}{4}\left( \widetilde{F}_{1}^{2}+h\widehat{F}_{1}\right) , \\
G_{5} &=&\frac{3}{2}\alpha h^{4}\widetilde{F}_{1},\qquad G_{4}=\frac{3}{4}%
\alpha h^{2}\widetilde{F}_{1}^{2}+\frac{3}{8}\alpha h^{3}\widehat{F}_{1}+%
\frac{1}{4}ah^{4}, \\
G_{3} &=&\frac{1}{8}\alpha \widetilde{F}_{1}^{3}+\frac{1}{4}ah^{2}\widetilde{%
F}_{1}+\frac{3}{2}\alpha h^{2}x+\frac{3}{16}\alpha h\widetilde{F}_{1}%
\widehat{F}_{1}+\frac{1}{4}ch^{3}, \\
G_{2} &=&\frac{3}{16}ch\widetilde{F}_{1}+\frac{3}{128}\alpha \widehat{F}%
_{1}^{2}+\frac{1}{16}a\left( \widetilde{F}_{1}^{2}+h\widehat{F}_{1}\right) +%
\frac{3}{4}\alpha x\widetilde{F}_{1}+\frac{1}{4}dh^{2}.
\end{eqnarray*}
\newline
(2) If $\beta \neq 0,$ then there is a homogeneous polynomial $\overline{F}%
_{1}$ of degree $1$ and $b,d\in \Bbb{C}$ such that 
\begin{eqnarray*}
F_{2} &=&h\overline{F}_{1},\qquad \widetilde{F}_{1}=bh,\qquad F_{3}=bh^{3},
\\
G_{5} &=&\left( \frac{3}{2}\alpha b+\beta \right) h^{5},\qquad G_{4}=Eh^{4}+%
\frac{3}{2}\alpha h^{3}\overline{F}_{1}, \\
G_{3} &=&Kh^{3}+Lh^{2}\overline{F}_{1}+\frac{3}{2}\alpha h^{2}x, \\
G_{2} &=&Ahx+Bh^{2}+C\overline{F}_{1}^{2}+Dh\overline{F}_{1},
\end{eqnarray*}
where 
\begin{eqnarray*}
A &=&\frac{5}{4}\beta +\frac{3}{4}\alpha b,\qquad B=-\frac{5}{128}\beta
b^{3}+\frac{3}{16}cb+\frac{3}{128}\alpha b^{4}+\frac{1}{4}d, \\
C &=&\frac{3}{8}\alpha ,\qquad D=\frac{1}{4}a+\frac{5}{16}\beta b-\frac{3}{16%
}\alpha b^{2}, \\
E &=&\frac{3}{8}\alpha b^{2}+\frac{5}{4}\beta b+\frac{1}{4}a, \\
K &=&\frac{5}{32}\beta b^{2}+\frac{1}{4}ab-\frac{1}{16}\alpha b^{3}+\frac{1}{%
4}c,\qquad L=\frac{5}{4}\beta +\frac{3}{4}\alpha b.
\end{eqnarray*}
\end{lemma}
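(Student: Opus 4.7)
The plan is to mimic the proofs of Lemmas~\ref{Lem_deg_7} and \ref{Lem_deg_6_H}. From $\deg[F,G]<6$ we extract the vanishing of the degree-$6$ component of $[F,G]$, which yields
\begin{equation*}
[F_{4},G_{2}]+[F_{3},G_{3}]+[F_{2},G_{4}]+[x,G_{5}]=0.
\end{equation*}
Substituting the expressions for $F_{3},F_{4},G_{3},G_{4},G_{5}$ supplied by Lemmas~\ref{Lem_deg_10}(2), \ref{Lem_deg_9}, \ref{Lem_deg_8}, and \ref{Lem_deg_7}, and expanding every bracket via the Leibniz rule from the end of Section~3, collects a long sum of elementary brackets.

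Using $[h,fg]=f[h,g]+g[h,f]$ and $[a,b]=-[b,a]$, I would systematically rewrite each diagonal contribution (those involving $h$) in the form $[h,\,\cdot\,]$ and verify that the off-diagonal pieces cancel in pairs — exactly the bookkeeping pattern already exhibited in Lemmas~\ref{Lem_deg_6_H} and \ref{Lem_deg_7}. This produces a single identity of the form
\begin{equation*}
[h,\,4h^{3}G_{2}-\Psi]=0,
\end{equation*}
where $\Psi\in\Bbb{C}[x,y,z]_{5}$ depends explicitly on $\alpha,\beta,a,c,h,\widetilde{F}_{1}$ and linearly on the still-unknown $F_{2}$. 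Since $h$ is a squarefree linear form, Lemma~\ref{Lemma_H_reduction} forces $4h^{3}G_{2}-\Psi=4dh^{5}$ for some $d\in\Bbb{C}$.

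Next I would read off the divisibility content. Since the left-hand side is divisible by $h^{3}$, the summands of $\Psi$ not visibly divisible by $h^{3}$ must themselves combine into a multiple of $h^{3}$. Paralleling the squarefree analysis, the $\alpha$-contributions assemble, up to a nonzero scalar, into $\alpha(\widetilde{F}_{1}^{2}-4F_{2})^{2}$, while the $\beta$-contributions form an additional summand proportional to $\beta\widetilde{F}_{1}\cdot(\text{linear combination of }F_{2}\text{ and }\widetilde{F}_{1}^{2})$. Splitting on $\beta$: when $\beta=0$ only the $\alpha$-piece survives, and since $\alpha\neq 0$ and $h$ is linear one gets $h\mid\widetilde{F}_{1}^{2}-4F_{2}$, hence $F_{2}=\tfrac{1}{4}(\widetilde{F}_{1}^{2}+h\widehat{F}_{1})$ for some linear $\widehat{F}_{1}$; when $\beta\neq 0$, the extra summand enforces first $h\mid\widetilde{F}_{1}$, i.e.\ $\widetilde{F}_{1}=bh$ for some $b\in\Bbb{C}$, and then $h\mid F_{2}$, i.e.\ $F_{2}=h\overline{F}_{1}$ for some linear $\overline{F}_{1}$.

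Finally, substituting these descriptions of $F_{2}$ (and, in case~(2), also of $\widetilde{F}_{1}$) into the identity $4h^{3}G_{2}=\Psi+4dh^{5}$ yields the claimed formula for $G_{2}$, and feeding them back into Lemma~\ref{Lem_deg_7} immediately produces the updated formulas for $F_{3},G_{3},G_{4},G_{5}$ in each case. The main obstacle is the second step: the Leibniz expansion of $[F_{2},G_{4}]$ and $[F_{3},G_{3}]$ generates a large number of terms, and it is only after careful pairing of off-diagonal brackets that the residual $h$-diagonal expression $\Psi$ takes the explicit shape needed to separate the two cases and to pin down the constants $A,B,C,D,E,K,L$; everything else is formal manipulation with the $\Bbb{C}$-derivation $[h,\,\cdot\,]$ together with a single application of $H$-reduction.
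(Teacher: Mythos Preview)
Your overall strategy matches the paper's proof exactly: expand the degree-$6$ vanishing condition, collect everything into the form $[h,4h^{3}G_{2}-\Psi]=0$, apply $H$-reduction, and then extract divisibility constraints. However, your handling of the case split is not quite right and would fail as written.

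The relevant non-$h^{3}$-divisible part of $\Psi$ is
\[
\tfrac{3}{32}\alpha\,h\bigl(\widetilde{F}_{1}^{2}-4F_{2}\bigr)^{2}
\;-\;\tfrac{5}{32}\beta\,h^{2}\,\widetilde{F}_{1}\bigl(\widetilde{F}_{1}^{2}-8F_{2}\bigr).
\]
Because the $\alpha$-term carries only a single factor of $h$ while the $\beta$-term carries $h^{2}$, the first consequence of $h^{3}\mid\Psi$ is always $h\mid(\widetilde{F}_{1}^{2}-4F_{2})$, \emph{regardless of $\beta$}; the $\beta$-term is invisible at this stage. Only after substituting $F_{2}=\tfrac{1}{4}(\widetilde{F}_{1}^{2}+h\widehat{F}_{1})$ does the $\alpha$-term become divisible by $h^{3}$, and then the remaining condition $h\mid\beta\,\widetilde{F}_{1}(\widetilde{F}_{1}^{2}-8F_{2})$ separates the two cases. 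When $\beta\neq 0$ this yields $h\mid\widetilde{F}_{1}$ \emph{or} $h\mid(\widetilde{F}_{1}^{2}-8F_{2})$; neither alternative by itself gives $h\mid\widetilde{F}_{1}$. The paper closes this by combining with the already-known $h\mid(\widetilde{F}_{1}^{2}-4F_{2})$: in the second alternative one subtracts to get $h\mid 4F_{2}$, whence $h\mid\widetilde{F}_{1}^{2}$ and so $h\mid\widetilde{F}_{1}$. Your sketch asserts that ``the extra summand enforces first $h\mid\widetilde{F}_{1}$'' directly, skipping both the universal first step and this small case argument; as stated, that inference is unjustified.
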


\begin{proof}
Since $\deg \left[ F,G\right] <6,$ we see that 
\begin{equation}
\left[ F_{4},G_{2}\right] +\left[ F_{3},G_{3}\right] +\left[
F_{2},G_{4}\right] +\left[ x,G_{5}\right] =0.  \label{Row_lem_deg6_1}
\end{equation}
By Lemma \ref{Lem_deg_10}(2), 
\begin{equation}
\left[ F_{4},G_{2}\right] =\left[ h^{4},G_{2}\right] =4h^{3}\left[
h,G_{2}\right] =\left[ h,4h^{3}G_{2}\right] ,  \label{Row_lem_deg6_2}
\end{equation}
and by Lemma \ref{Lem_deg_7}, 
\begin{eqnarray}
\left[ x,G_{5}\right] &=&\left[ x,\,\frac{3}{2}\alpha h^{4}\widetilde{F}%
_{1}+\beta h^{5}\right]  \label{Row_lem_deg6_2b} \\
&=&\underset{*************}{\frac{3}{2}\alpha h^{4}\left[ x,\widetilde{F}%
_{1}\right] }+\underset{==========}{6\alpha h^{3}\widetilde{F}_{1}\left[
x,h\right] }+5\beta h^{4}\left[ x,h\right] .  \notag
\end{eqnarray}
Also by Lemma \ref{Lem_deg_7}, 
\begin{eqnarray}
&&\left[ F_{3},G_{3}\right]  \label{Row_lem_deg6_4} \\
&=&\left[ h^{2}\widetilde{F}_{1},\frac{5}{32}\beta h\widetilde{F}_{1}^{2}+%
\frac{1}{4}ah^{2}\widetilde{F}_{1}-\frac{1}{16}\alpha \widetilde{F}_{1}^{3}+%
\frac{5}{4}\beta hF_{2}+\frac{3}{2}\alpha h^{2}x+\frac{3}{4}\alpha F_{2}%
\widetilde{F}_{1}+\frac{1}{4}ch^{3}\right]  \notag \\
&=&\left[ h^{2}\widetilde{F}_{1},\frac{5}{32}\beta h\widetilde{F}_{1}^{2}-%
\frac{1}{16}\alpha \widetilde{F}_{1}^{3}+\frac{5}{4}\beta hF_{2}+\frac{3}{2}%
\alpha h^{2}x+\frac{3}{4}\alpha F_{2}\widetilde{F}_{1}+\frac{1}{4}%
ch^{3}\right]  \notag \\
&=&h^{2}\left[ \widetilde{F}_{1},\frac{5}{32}\beta h\widetilde{F}_{1}^{2}-%
\frac{1}{16}\alpha \widetilde{F}_{1}^{3}+\frac{5}{4}\beta hF_{2}+\frac{3}{2}%
\alpha h^{2}x+\frac{3}{4}\alpha F_{2}\widetilde{F}_{1}+\frac{1}{4}%
ch^{3}\right]  \notag \\
&&+2h\widetilde{F}_{1}\left[ h,\frac{5}{32}\beta h\widetilde{F}_{1}^{2}-%
\frac{1}{16}\alpha \widetilde{F}_{1}^{3}+\frac{5}{4}\beta hF_{2}+\frac{3}{2}%
\alpha h^{2}x+\frac{3}{4}\alpha F_{2}\widetilde{F}_{1}+\frac{1}{4}%
ch^{3}\right]  \notag \\
&=&\underset{--\#--\#--\#--}{\frac{5}{32}\beta h^{2}\widetilde{F}%
_{1}^{2}\left[ \widetilde{F}_{1},h\right] }+\underset{----------}{\frac{5}{4}%
\beta h^{3}\left[ \widetilde{F}_{1},F_{2}\right] }+\underset{-*-*-*-*-*-*-}{%
\frac{5}{4}\beta h^{2}F_{2}\left[ \widetilde{F}_{1},h\right] }  \notag \\
&&+\underset{*************}{\frac{3}{2}\alpha h^{4}\left[ \widetilde{F}%
_{1},x\right] }+\underset{==========}{3\alpha h^{3}x\left[ \widetilde{F}%
_{1},h\right] }+\underset{+++++++++++}{\frac{3}{4}\alpha h^{2}\widetilde{F}%
_{1}\left[ \widetilde{F}_{1},F_{2}\right] }+\frac{3}{4}ch^{4}\left[ 
\widetilde{F}_{1},h\right]  \notag \\
&&+\underset{--\#--\#--\#--}{\frac{5}{8}\beta h^{2}\widetilde{F}%
_{1}^{2}\left[ h,\widetilde{F}_{1}\right] }-\frac{3}{8}\alpha h\widetilde{F}%
_{1}^{3}\left[ h,\widetilde{F}_{1}\right] +\underset{-*-*-*-*-*-*-}{\frac{5}{%
2}\beta h^{2}\widetilde{F}_{1}\left[ h,F_{2}\right] }  \notag \\
&&+\underset{==========}{3\alpha h^{3}\widetilde{F}_{1}\left[ h,x\right] }+%
\underset{\symbol{94}\symbol{94}\symbol{94}\symbol{94}\symbol{94}\symbol{94}%
\symbol{94}\symbol{94}\symbol{94}\symbol{94}\symbol{94}\symbol{94}\symbol{94}%
}{\frac{3}{2}\alpha h\widetilde{F}_{1}^{2}\left[ h,F_{2}\right] }+\underset{%
\symbol{94}\symbol{94}\symbol{94}\symbol{94}\symbol{94}\symbol{94}\symbol{94}%
\symbol{94}\symbol{94}\symbol{94}\symbol{94}\symbol{94}\symbol{94}}{\frac{3}{%
2}\alpha h\widetilde{F}_{1}F_{2}\left[ h,\widetilde{F}_{1}\right] }  \notag
\end{eqnarray}
and 
\begin{eqnarray}
\left[ F_{2},G_{4}\right] &=&\left[ F_{2},\frac{3}{8}\alpha h^{2}\widetilde{F%
}_{1}^{2}+\frac{5}{4}\beta h^{3}\widetilde{F}_{1}+\frac{3}{2}\alpha
h^{2}F_{2}+\frac{1}{4}ah^{4}\right]  \label{Row_lem_deg6_5} \\
&=&\underset{\symbol{94}\symbol{94}\symbol{94}\symbol{94}\symbol{94}\symbol{%
94}\symbol{94}\symbol{94}\symbol{94}\symbol{94}\symbol{94}\symbol{94}\symbol{%
94}}{\frac{3}{4}\alpha h\widetilde{F}_{1}^{2}\left[ F_{2},h\right] }+%
\underset{++++++++++++}{\frac{3}{4}\alpha h^{2}\widetilde{F}_{1}\left[ F_{2},%
\widetilde{F}_{1}\right] }+\underset{----------}{\frac{5}{4}\beta
h^{3}\left[ F_{2},\widetilde{F}_{1}\right] }  \notag \\
&&+\underset{-*-*-*-*-*-*-}{\frac{15}{4}\beta h^{2}\widetilde{F}_{1}\left[
F_{2},h\right] }+3\alpha hF_{2}\left[ F_{2},h\right] +ah^{3}\left[
F_{2},h\right] .  \notag
\end{eqnarray}
Notice that: 
\begin{eqnarray}
\underset{---------------------}{\frac{5}{4}\beta h^{3}\left[ \widetilde{F}%
_{1},F_{2}\right] +\frac{5}{4}\beta h^{3}\left[ F_{2},\widetilde{F}%
_{1}\right] } &=&0, \\
\underset{\ast *****************************}{\frac{3}{2}\alpha h^{4}\left[
x,\widetilde{F}_{1}\right] +\frac{3}{2}\alpha h^{4}\left[ \widetilde{F}%
_{1},x\right] } &=&0, \\
\underset{++++++++++++++++++++++++}{\frac{3}{4}\alpha h^{2}\widetilde{F}%
_{1}\left[ \widetilde{F}_{1},F_{2}\right] +\frac{3}{4}\alpha h^{2}\widetilde{%
F}_{1}\left[ F_{2},\widetilde{F}_{1}\right] } &=&0,
\end{eqnarray}
anf that: 
\begin{eqnarray}
&&\underset{--\#--\#--\#--}{\frac{5}{32}\beta h^{2}\widetilde{F}%
_{1}^{2}\left[ \widetilde{F}_{1},h\right] +\frac{5}{8}\beta h^{2}\widetilde{F%
}_{1}^{2}\left[ h,\widetilde{F}_{1}\right] } \\
&=&\frac{15}{32}\beta h^{2}\widetilde{F}_{1}^{2}\left[ h,\widetilde{F}%
_{1}\right] =\left[ h,\frac{5}{32}\beta h^{2}\widetilde{F}_{1}^{3}\right] , 
\notag
\end{eqnarray}
\begin{eqnarray}
&&\underset{-*-*-*-*-*-*-*-*-*-*-*-*-*-*-*-*-*-*-*-*-}{\frac{5}{4}\beta
h^{2}F_{2}\left[ \widetilde{F}_{1},h\right] +\frac{5}{2}\beta h^{2}%
\widetilde{F}_{1}\left[ h,F_{2}\right] +\frac{15}{4}\beta h^{2}\widetilde{F}%
_{1}\left[ F_{2},h\right] }  \label{Row_lem_deg6_7} \\
&=&\frac{5}{4}\beta h^{2}\left( F_{2}\left[ \widetilde{F}_{1},h\right] +%
\widetilde{F}_{1}\left[ F_{2},h\right] \right) =-\left[ h,\frac{5}{4}\beta
h^{2}\widetilde{F}_{1}F_{2}\right] ,  \notag
\end{eqnarray}
\begin{eqnarray}
&&\underset{=================================}{6\alpha h^{3}\widetilde{F}%
_{1}\left[ x,h\right] +3\alpha h^{3}x\left[ \widetilde{F}_{1},h\right]
+3\alpha h^{3}\widetilde{F}_{1}\left[ h,x\right] }  \label{Row_lem_deg6_8} \\
&=&3\alpha h^{3}\left( \widetilde{F}_{1}\left[ x,h\right] +x\left[ 
\widetilde{F}_{1},h\right] \right) =-\left[ h,3\alpha h^{3}x\widetilde{F}%
_{1}\right] ,  \notag
\end{eqnarray}
\begin{eqnarray}
&&\underset{\symbol{94}\symbol{94}\symbol{94}\symbol{94}\symbol{94}\symbol{94%
}\symbol{94}\symbol{94}\symbol{94}\symbol{94}\symbol{94}\symbol{94}\symbol{94%
}\symbol{94}\symbol{94}\symbol{94}\symbol{94}\symbol{94}\symbol{94}\symbol{94%
}\symbol{94}\symbol{94}\symbol{94}\symbol{94}\symbol{94}\symbol{94}\symbol{94%
}\symbol{94}\symbol{94}\symbol{94}\symbol{94}\symbol{94}\symbol{94}\symbol{94%
}\symbol{94}\symbol{94}\symbol{94}\symbol{94}\symbol{94}\symbol{94}\symbol{94%
}\symbol{94}\symbol{94}\symbol{94}\symbol{94}\symbol{94}\symbol{94}\symbol{94%
}\symbol{94}\symbol{94}\symbol{94}\symbol{94}}{\frac{3}{2}\alpha h\widetilde{%
F}_{1}^{2}\left[ h,F_{2}\right] +\frac{3}{2}\alpha h\widetilde{F}%
_{1}F_{2}\left[ h,\widetilde{F}_{1}\right] +\frac{3}{4}\alpha h\widetilde{F}%
_{1}^{2}\left[ F_{2},h\right] }  \label{Row_lem_deg6_9} \\
&=&\frac{3}{4}\alpha h\left( \widetilde{F}_{1}^{2}\left[ h,F_{2}\right] +2%
\widetilde{F}_{1}F_{2}\left[ h,\widetilde{F}_{1}\right] \right) =\left[ h,%
\frac{3}{4}\alpha h\widetilde{F}_{1}^{2}F_{2}\right] .  \notag
\end{eqnarray}
By (\ref{Row_lem_deg6_1})-(\ref{Row_lem_deg6_9}) we have 
\begin{eqnarray*}
&&\left[ h,4h^{3}G_{2}\right] +5\beta h^{4}\left[ x,h\right] +\frac{3}{4}%
ch^{4}\left[ \widetilde{F}_{1},h\right] -\frac{3}{8}\alpha h\widetilde{F}%
_{1}^{3}\left[ h,\widetilde{F}_{1}\right] +3\alpha hF_{2}\left[
F_{2},h\right] \\
&&+ah^{3}\left[ F_{2},h\right] +\left[ h,\frac{5}{32}\beta h^{2}\widetilde{F}%
_{1}^{3}\right] -\left[ h,\frac{5}{4}\beta h^{2}\widetilde{F}%
_{1}F_{2}\right] -\left[ h,3\alpha h^{3}x\widetilde{F}_{1}\right] +\left[ h,%
\frac{3}{4}\alpha h\widetilde{F}_{1}^{2}F_{2}\right] \\
&=&0
\end{eqnarray*}
or equivalently 
\begin{eqnarray}
&&\left[ h,4h^{3}G_{2}\right] -\left[ h,5\beta h^{4}x\right] -\left[ h,\frac{%
3}{4}ch^{4}\widetilde{F}_{1}\right]  \label{Row_lem_deg6_10} \\
&&-\left[ h,\frac{3}{32}\alpha h\widetilde{F}_{1}^{4}\right] -\left[ h,\frac{%
3}{2}\alpha hF_{2}^{2}\right] -\left[ h,ah^{3}F_{2}\right]  \notag \\
&&+\left[ h,\frac{5}{32}\beta h^{2}\widetilde{F}_{1}^{3}\right] -\left[ h,%
\frac{5}{4}\beta h^{2}\widetilde{F}_{1}F_{2}\right] -\left[ h,3\alpha h^{3}x%
\widetilde{F}_{1}\right] +\left[ h,\frac{3}{4}\alpha h\widetilde{F}%
_{1}^{2}F_{2}\right]  \notag \\
&=&0.  \notag
\end{eqnarray}
By the last equality and Lemma \ref{Lemma_H_reduction} there exists $d\in 
\Bbb{C}$ such that 
\begin{eqnarray}
&&4h^{3}G_{2}-5\beta h^{4}x-\frac{3}{4}ch^{4}\widetilde{F}_{1}-\frac{3}{32}%
\alpha h\widetilde{F}_{1}^{4}-\frac{3}{2}\alpha hF_{2}^{2}
\label{Row_lem_deg6_11} \\
&&-ah^{3}F_{2}+\frac{5}{32}\beta h^{2}\widetilde{F}_{1}^{3}-\frac{5}{4}\beta
h^{2}\widetilde{F}_{1}F_{2}-3\alpha h^{3}x\widetilde{F}_{1}+\frac{3}{4}%
\alpha h\widetilde{F}_{1}^{2}F_{2}  \notag \\
&=&dh^{5}.  \notag
\end{eqnarray}
Since $h^{2}|4h^{3}G_{2}-5\beta h^{4}x-\frac{3}{4}ch^{4}\widetilde{F}%
_{1}-ah^{3}F_{2}+\frac{5}{32}\beta h^{2}\widetilde{F}_{1}^{3}-\frac{5}{4}%
\beta h^{2}\widetilde{F}_{1}F_{2}-3\alpha h^{3}x\widetilde{F}_{1},$ we see
that 
\begin{equation*}
h^{2}|-\frac{3}{32}\alpha h\widetilde{F}_{1}^{4}+\frac{3}{4}\alpha h%
\widetilde{F}_{1}^{2}F_{2}-\frac{3}{2}\alpha hF_{2}^{2}=-\frac{3}{32}\alpha
h\left( \widetilde{F}_{1}^{2}-4F_{2}\right) ^{2}.
\end{equation*}
Thus $h|\widetilde{F}_{1}^{2}-4F_{2},$ and so there exists a homogeneous
polynomial $\widehat{F}_{1}$ of degree $1$ such that 
\begin{equation}
-\widehat{F}_{1}h=\widetilde{F}_{1}^{2}-4F_{2}.  \label{Row_lem_deg6_12}
\end{equation}
Then 
\begin{equation}
F_{2}=\frac{1}{4}\left( \widetilde{F}_{1}^{2}+h\widehat{F}_{1}\right) .
\label{Row_lem_deg6_13}
\end{equation}
Using (\ref{Row_lem_deg6_12})-(\ref{Row_lem_deg6_13}) we can rewrite (\ref
{Row_lem_deg6_11}) as 
\begin{eqnarray}
&&4h^{3}G_{2}-5\beta h^{4}x-\frac{3}{4}ch^{4}\widetilde{F}_{1}-\frac{3}{32}%
\alpha h^{3}\widehat{F}_{1}^{2}  \label{Row_lem_deg6_14} \\
&&-ah^{3}F_{2}+\frac{5}{32}\beta h^{2}\widetilde{F}_{1}^{3}-\frac{5}{4}\beta
h^{2}\widetilde{F}_{1}F_{2}-3\alpha h^{3}x\widetilde{F}_{1}  \notag \\
&=&dh^{5}.  \notag
\end{eqnarray}
Now, since $h^{3}|4h^{3}G_{2}-5\beta h^{4}x-\frac{3}{4}ch^{4}\widetilde{F}%
_{1}-\frac{3}{32}\alpha h^{3}\widehat{F}_{1}^{2}-ah^{3}F_{2}-3\alpha h^{3}x%
\widetilde{F}_{1},$ we conclude that 
\begin{equation*}
h^{3}|\frac{5}{32}\beta h^{2}\widetilde{F}_{1}^{3}-\frac{5}{4}\beta h^{2}%
\widetilde{F}_{1}F_{2}=\frac{5}{32}\beta h^{2}\widetilde{F}_{1}\left( 
\widetilde{F}_{1}^{2}-8F_{2}\right) .
\end{equation*}
Thus $\beta =0$ or $h|\widetilde{F}_{1}\left( \widetilde{F}%
_{1}^{2}-8F_{2}\right) .$

In the first case (i.e. $\beta =0$), by (\ref{Row_lem_deg6_12})-(\ref
{Row_lem_deg6_14}), 
\begin{equation*}
G_{2}=\frac{3}{16}ch\widetilde{F}_{1}+\frac{3}{128}\alpha \widehat{F}%
_{1}^{2}+\frac{1}{16}a\left( \widetilde{F}_{1}^{2}+h\widehat{F}_{1}\right) +%
\frac{3}{4}\alpha x\widetilde{F}_{1}+\frac{1}{4}dh^{2}.
\end{equation*}
The formulas for $G_{3},G_{4}$ and $G_{5}$ are obtained by substituting (\ref
{Row_lem_deg6_13}) and $\beta =0$ in the formulas from Lemma \ref{Lem_deg_7}.

In the second case (i.e. $h|\widetilde{F}_{1}\left( \widetilde{F}%
_{1}^{2}-8F_{2}\right) $ and $\beta \neq 0$) we obtain that $h|\widetilde{F}%
_{1},F_{2}.\,$Indeed, if $h|\widetilde{F}_{1},$ then $h|\frac{1}{4}\left( 
\widetilde{F}_{1}^{2}+h\widehat{F}_{1}\right) =F_{2}.$ And, if $h|\left( 
\widetilde{F}_{1}^{2}-8F_{2}\right) ,$ then $h|\left( \widetilde{F}%
_{1}^{2}-4F_{2}\right) -\left( \widetilde{F}_{1}^{2}-8F_{2}\right) =4F_{2},$
and so $h|\left( \widetilde{F}_{1}^{2}-4F_{2}\right) +4F_{2}=\widetilde{F}%
_{1}^{2}$ and $h|\widetilde{F}_{1}.$ Thus there exists a homogeneous
polynomial $\overline{F}_{1}$ of degree $1$ and $b\in \Bbb{C}$ such that 
\begin{equation}
F_{2}=\overline{F}_{1}h,\qquad \widetilde{F}_{1}=bh.  \label{Row_lem_deg6_15}
\end{equation}
By (\ref{Row_lem_deg6_15}) and (\ref{Row_lem_deg6_11}) we have 
\begin{eqnarray}
&&4h^{3}G_{2}-5\beta h^{4}x-\frac{3}{4}cbh^{5}-\frac{3}{32}\alpha b^{4}h^{5}-%
\frac{3}{2}\alpha h^{3}\overline{F}_{1}^{2}  \label{Row_lem_deg6_16} \\
&&-ah^{4}\overline{F}_{1}+\frac{5}{32}\beta b^{3}h^{5}-\frac{5}{4}\beta
bh^{4}\overline{F}_{1}-3\alpha bh^{4}x+\frac{3}{4}\alpha b^{2}h^{4}\overline{%
F}_{1}  \notag \\
&=&dh^{5}  \notag
\end{eqnarray}
or equivalently 
\begin{eqnarray}
G_{2} &=&\left( \frac{5}{4}\beta +\frac{3}{4}\alpha b\right) hx+\left( -%
\frac{5}{128}\beta b^{3}+\frac{3}{16}cb+\frac{3}{128}\alpha b^{4}+\frac{1}{4}%
d\right) h^{2}  \label{Row_lem_deg6_17} \\
&&+\frac{3}{8}\alpha \overline{F}_{1}^{2}+\left( \frac{1}{4}a+\frac{5}{16}%
\beta b-\frac{3}{16}\alpha b^{2}\right) h\overline{F}_{1}  \notag
\end{eqnarray}
The formulas for $G_{3},G_{4},G_{5}$ and $F_{3}$ are obtained by
substituting (\ref{Row_lem_deg6_15}) in the formulas from Lemma \ref
{Lem_deg_7}.
\end{proof}

Now we consider the situation of Lemma \ref{Lem_deg_6}(1), and we show that
if $\deg \left[ F,G\right] <5,$ then we do not need consider cases $\beta =0$
and $\beta \neq 0$ separately.

\begin{lemma}
\label{Lem_deg_5_beta_0_b} Let $\deg \left[ F,G\right] <5$ and let $\alpha
,\beta ,a,c,d,h,\widetilde{F}_{1},\widehat{F}_{1}$ be as in Lemma \ref
{Lem_deg_6}(1) (in particular $\beta =0$). Then there is $b\in \Bbb{C}$ such
that for $\overline{F}_{1}=\frac{1}{4}\left( b^{2}h+\widehat{F}_{1}\right) $
the formulas of Lemma \ref{Lem_deg_6}(2) holds true (of course with $\beta =0
$).
\end{lemma}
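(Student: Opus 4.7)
The strategy is to push the analysis one degree further in the Poisson bracket. Since $\deg[F,G] < 5$, the homogeneous degree-$5$ component of $[F,G]$ must vanish, which gives the identity
$$[F_4, z] + [F_3, G_2] + [F_2, G_3] + [x, G_4] = 0.$$
I would substitute the formulas of Lemma \ref{Lem_deg_6}(1) (with $\beta = 0$) for $F_4, F_3, F_2, G_4, G_3, G_2$ into each bracket and proceed exactly as in the proofs of Lemmas \ref{Lem_deg_7} and \ref{Lem_deg_6}: expand each bracket via the Leibniz rule, pull powers of $h$ outside using $[h,h^k] = 0$, and then pair up summands of the form $u[v,h]$ and $v[u,h]$ so that antisymmetry collapses them into a single $[h, uv]$. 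The terms that do not combine this way (the ``$\widetilde{F}_1F_2$-type'' and ``$\widetilde{F}_1^3$-type'' residues) should cancel in matching pairs across the four summands, just as they did in the earlier lemmas.

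After this reorganization, the identity should take the form $[h, R] = 0$ for a single homogeneous polynomial $R$ of degree $3$. Since $h$ is a squarefree homogeneous polynomial of degree $1$, Lemma \ref{Lemma_H_reduction} immediately yields $R = eh^3$ for some $e \in \mathbb{C}$.

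The decisive step is then a divisibility analysis of the equation $R = eh^3$. Nearly all terms contributing to $R$ are manifestly divisible by $h$ (those involving $h\widehat{F}_1$, $h^2$, $hx$, etc.), so the remaining terms, all proportional to $\alpha$ and built from $\widetilde{F}_1$ and $\widehat{F}_1$, must by themselves be divisible by $h$. Using that $h$ is a linear form and $\alpha \neq 0$, this residual divisibility should force $h \mid \widetilde{F}_1$, and hence $\widetilde{F}_1 = bh$ for some $b \in \mathbb{C}$.

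Given $\widetilde{F}_1 = bh$, the claim is then a bookkeeping verification: from Lemma \ref{Lem_deg_6}(1) we get
$$F_2 = \tfrac{1}{4}(\widetilde{F}_1^2 + h\widehat{F}_1) = h \cdot \tfrac{1}{4}(b^2 h + \widehat{F}_1) = h\overline{F}_1,$$
and substituting $\widetilde{F}_1 = bh$ and $\beta = 0$ into the expressions for $G_5, G_4, G_3, G_2$ of Lemma \ref{Lem_deg_6}(1) reproduces the formulas of Lemma \ref{Lem_deg_6}(2) with $\beta = 0$ (the constants $A, B, C, D, E, K, L$ of that lemma reducing to the expected values). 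The main obstacle is the first stage: organizing the (rather lengthy) Poisson-bracket expansion so that every non-$[h, \cdot]$ contribution cancels in pairs, and then isolating exactly the right residual term so that the divisibility conclusion is $h \mid \widetilde{F}_1$ rather than some weaker combination.
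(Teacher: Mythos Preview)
Your strategy matches the paper's: expand the degree-$5$ component of $[F,G]$, reassemble it as $[h,R]=0$, invoke Lemma~\ref{Lemma_H_reduction}, and then run a divisibility analysis to force $\widetilde{F}_1=bh$. One small slip: once everything is collected under $[h,\cdot]$, the polynomial $R$ is homogeneous of degree $4$ (e.g.\ the term $4h^{3}z$ coming from $[F_4,z]$), so Lemma~\ref{Lemma_H_reduction} gives $R=gh^{4}$, not $R=eh^{3}$.

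The genuine gap is in the divisibility step. After the cancellations, the only summand of $R$ that is divisible by $h$ but not by $h^{2}$ is $\tfrac{3}{64}\alpha\,h\,\widetilde{F}_1\widehat{F}_1^{\,2}$. Hence $R=gh^{4}$ forces only
\[
h\mid \widetilde{F}_1\widehat{F}_1^{\,2},
\]
and since $h$ is a linear (hence prime) form this yields $h\mid\widetilde{F}_1$ \emph{or} $h\mid\widehat{F}_1$; it does not by itself give $h\mid\widetilde{F}_1$. The paper handles the alternative $h\mid\widehat{F}_1$ by a second pass: once $\widehat{F}_1$ is a multiple of $h$, every term in $R$ except $-\tfrac{3}{32}c\,h^{2}\widetilde{F}_1^{\,2}$ becomes divisible by $h^{3}$, and this further divisibility is what finally forces $h\mid\widetilde{F}_1$. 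Your write-up needs this extra case analysis; otherwise the key conclusion $\widetilde{F}_1=bh$ is unjustified. After that, your bookkeeping verification (substituting $\widetilde{F}_1=bh$, $\widehat{F}_1=4\overline{F}_1-b^{2}h$, $\beta=0$ into the formulas of Lemma~\ref{Lem_deg_6}(1)) is exactly what the paper does as well.
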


\begin{proof}
Since $\deg \left[ F,G\right] <5,$ we have 
\begin{equation}
\left[ F_{4},z\right] +\left[ F_{3},G_{2}\right] +\left[ F_{2},G_{3}\right]
+\left[ x,G_{4}\right] =0.  \label{Row_lem_deg5_beta0_1}
\end{equation}
By Lemma \ref{Lem_deg_10}(2), 
\begin{equation}
\left[ F_{4},z\right] =\left[ h^{4},z\right] =4h^{3}\left[ h,z\right]
=\left[ h,4h^{3}z\right] .  \label{Row_lem_deg5_beta0_2}
\end{equation}
By Lemma \ref{Lem_deg_7} and Lemma \ref{Lem_deg_6}(1), 
\begin{eqnarray}
&&\left[ F_{3},G_{2}\right]   \label{Row_lem_deg5_beta0_6} \\
&=&\left[ h^{2}\widetilde{F}_{1},\frac{3}{16}ch\widetilde{F}_{1}+\frac{3}{128%
}\alpha \widehat{F}_{1}^{2}+\frac{1}{16}a\widetilde{F}_{1}^{2}+\frac{1}{16}ah%
\widehat{F}_{1}+\frac{3}{4}\alpha x\widetilde{F}_{1}+\frac{1}{4}%
dh^{2}\right]   \notag \\
&=&h^{2}\left[ \widetilde{F}_{1},\frac{3}{16}ch\widetilde{F}_{1}+\frac{3}{128%
}\alpha \widehat{F}_{1}^{2}+\frac{1}{16}a\widetilde{F}_{1}^{2}+\frac{1}{16}ah%
\widehat{F}_{1}+\frac{3}{4}\alpha x\widetilde{F}_{1}+\frac{1}{4}%
dh^{2}\right]   \notag \\
&&+2h\widetilde{F}_{1}\left[ h,\frac{3}{16}ch\widetilde{F}_{1}+\frac{3}{128}%
\alpha \widehat{F}_{1}^{2}+\frac{1}{16}a\widetilde{F}_{1}^{2}+\frac{1}{16}ah%
\widehat{F}_{1}+\frac{3}{4}\alpha x\widetilde{F}_{1}+\frac{1}{4}%
dh^{2}\right]   \notag \\
&=&\frac{3}{16}ch^{2}\widetilde{F}_{1}\left[ \widetilde{F}_{1},h\right] +%
\underset{--**--**--**--}{\frac{3}{64}\alpha h^{2}\widehat{F}_{1}\left[ 
\widetilde{F}_{1},\widehat{F}_{1}\right] }+\underset{==*==*==*==}{\frac{1}{16%
}ah^{3}\left[ \widetilde{F}_{1},\widehat{F}_{1}\right] }+\underset{\symbol{94%
}\symbol{94}**\symbol{94}\symbol{94}**\symbol{94}\symbol{94}**\symbol{94}%
\symbol{94}}{\frac{1}{16}ah^{2}\widehat{F}_{1}\left[ \widetilde{F}%
_{1},h\right] }  \notag \\
&&+\underset{----------}{\frac{3}{4}\alpha h^{2}\widetilde{F}_{1}\left[ 
\widetilde{F}_{1},x\right] }+\frac{1}{2}dh^{3}\left[ \widetilde{F}%
_{1},h\right]   \notag \\
&&+\underset{---\#\#---\#\#---}{\frac{3}{8}ch^{2}\widetilde{F}_{1}\left[ h,%
\widetilde{F}_{1}\right] }+\underset{--+++--+++--}{\frac{3}{32}\alpha h%
\widetilde{F}_{1}\widehat{F}_{1}\left[ h,\widehat{F}_{1}\right] }+\underset{%
--\#--\#--\#--}{\frac{1}{4}ah\widetilde{F}_{1}^{2}\left[ h,\widetilde{F}%
_{1}\right] }  \notag \\
&&+\underset{++*++*++*++}{\frac{1}{8}ah^{2}\widetilde{F}_{1}\left[ h,%
\widehat{F}_{1}\right] }+\underset{*************}{\frac{3}{2}\alpha hx%
\widetilde{F}_{1}\left[ h,\widetilde{F}_{1}\right] }+\underset{-+--+--+-}{%
\frac{3}{2}\alpha h\widetilde{F}_{1}^{2}\left[ h,x\right] }.  \notag
\end{eqnarray}
By Lemma \ref{Lem_deg_6}(1), 
\begin{eqnarray}
\left[ x,G_{4}\right]  &=&\left[ x,\frac{3}{4}\alpha h^{2}\widetilde{F}%
_{1}^{2}+\frac{3}{8}\alpha h^{3}\widehat{F}_{1}+\frac{1}{4}ah^{4}\right] 
\label{Row_lem_deg5_beta0_3} \\
&=&\underset{----------}{\frac{3}{2}\alpha h^{2}\widetilde{F}_{1}\left[ x,%
\widetilde{F}_{1}\right] }+\underset{-+--+--+-}{\frac{3}{2}\alpha h%
\widetilde{F}_{1}^{2}\left[ x,h\right] }+\underset{+++++++++}{\frac{9}{8}%
\alpha h^{2}\widehat{F}_{1}\left[ x,h\right] }  \notag \\
&&+\underset{\symbol{94}\symbol{94}\symbol{94}\symbol{94}\symbol{94}\symbol{%
94}\symbol{94}\symbol{94}\symbol{94}\symbol{94}\symbol{94}\symbol{94}\symbol{%
94}}{\frac{3}{8}\alpha h^{3}\left[ x,\widehat{F}_{1}\right] }+ah^{3}\left[
x,h\right] ,  \notag
\end{eqnarray}
and 
\begin{eqnarray}
&&\left[ F_{2},G_{3}\right]   \label{Row_lem_deg5_beta0_5} \\
&=&\left[ \frac{1}{4}\left( \widetilde{F}_{1}^{2}+h\widehat{F}_{1}\right) ,%
\frac{1}{4}ah^{2}\widetilde{F}_{1}+\frac{1}{8}\alpha \widetilde{F}_{1}^{3}+%
\frac{3}{2}\alpha h^{2}x+\frac{3}{16}\alpha h\widehat{F}_{1}\widetilde{F}%
_{1}+\frac{1}{4}ch^{3}\right]   \notag \\
&=&\frac{1}{2}\widetilde{F}_{1}\left[ \widetilde{F}_{1},\frac{1}{4}ah^{2}%
\widetilde{F}_{1}+\frac{1}{8}\alpha \widetilde{F}_{1}^{3}+\frac{3}{2}\alpha
h^{2}x+\frac{3}{16}\alpha h\widehat{F}_{1}\widetilde{F}_{1}+\frac{1}{4}%
ch^{3}\right]   \notag \\
&&+\frac{1}{4}h\left[ \widehat{F}_{1},\frac{1}{4}ah^{2}\widetilde{F}_{1}+%
\frac{1}{8}\alpha \widetilde{F}_{1}^{3}+\frac{3}{2}\alpha h^{2}x+\frac{3}{16}%
\alpha h\widehat{F}_{1}\widetilde{F}_{1}+\frac{1}{4}ch^{3}\right]   \notag \\
&&+\frac{1}{4}\widehat{F}_{1}\left[ h,\frac{1}{4}ah^{2}\widetilde{F}_{1}+%
\frac{1}{8}\alpha \widetilde{F}_{1}^{3}+\frac{3}{2}\alpha h^{2}x+\frac{3}{16}%
\alpha h\widehat{F}_{1}\widetilde{F}_{1}+\frac{1}{4}ch^{3}\right]
\allowbreak   \notag \\
&=&\underset{--\#--\#--\#--}{\frac{1}{4}ah\widetilde{F}_{1}^{2}\left[ 
\widetilde{F}_{1},h\right] }+\underset{----------}{\frac{3}{4}\alpha h^{2}%
\widetilde{F}_{1}\left[ \widetilde{F}_{1},x\right] }+\underset{*************%
}{\frac{3}{2}\alpha hx\widetilde{F}_{1}\left[ \widetilde{F}_{1},h\right] } 
\notag \\
&&+\underset{==============}{\frac{3}{32}\alpha \widetilde{F}_{1}^{2}h\left[ 
\widetilde{F}_{1},\widehat{F}_{1}\right] }+\underset{--*--*--*--}{\frac{3}{32%
}\alpha \widetilde{F}_{1}^{2}\widehat{F}_{1}\left[ \widetilde{F}%
_{1},h\right] }+\underset{---\#\#---\#\#---}{\frac{3}{8}ch^{2}\widetilde{F}%
_{1}\left[ \widetilde{F}_{1},h\right] }  \notag \\
&&+\underset{==*==*==*==}{\frac{1}{16}ah^{3}\left[ \widehat{F}_{1},%
\widetilde{F}_{1}\right] }+\underset{++*++*++*++}{\frac{1}{8}ah^{2}%
\widetilde{F}_{1}\left[ \widehat{F}_{1},h\right] }+\underset{============}{%
\frac{3}{32}\alpha h\widetilde{F}_{1}^{2}\left[ \widehat{F}_{1},\widetilde{F}%
_{1}\right] }  \notag \\
&&+\underset{\symbol{94}\symbol{94}\symbol{94}\symbol{94}\symbol{94}\symbol{%
94}\symbol{94}\symbol{94}\symbol{94}\symbol{94}\symbol{94}\symbol{94}\symbol{%
94}}{\frac{3}{8}\alpha h^{3}\left[ \widehat{F}_{1},x\right] }+\underset{%
+++++++++}{\frac{3}{4}\alpha h^{2}x\left[ \widehat{F}_{1},h\right] }+%
\underset{--**--**--**--}{\frac{3}{64}\alpha h^{2}\widehat{F}_{1}\left[ 
\widehat{F}_{1},\widetilde{F}_{1}\right] }  \notag \\
&&+\underset{\symbol{94}\symbol{94}--\symbol{94}\symbol{94}--\symbol{94}%
\symbol{94}--\symbol{94}\symbol{94}}{\frac{3}{64}\alpha h\widehat{F}_{1}%
\widetilde{F}_{1}\left[ \widehat{F}_{1},h\right] }+\frac{3}{16}ch^{3}\left[ 
\widehat{F}_{1},h\right]   \notag \\
&&+\underset{\symbol{94}\symbol{94}**\symbol{94}\symbol{94}**\symbol{94}%
\symbol{94}**\symbol{94}\symbol{94}}{\frac{1}{16}ah^{2}\widehat{F}_{1}\left[
h,\widetilde{F}_{1}\right] }+\underset{--*--*--*--}{\frac{3}{32}\alpha 
\widehat{F}_{1}\widetilde{F}_{1}^{2}\left[ h,\widetilde{F}_{1}\right] }+%
\underset{+++++++++}{\frac{3}{8}\alpha h^{2}\widehat{F}_{1}\left[ h,x\right] 
}  \notag \\
&&+\underset{--+++--+++--}{\frac{3}{64}\alpha h\widehat{F}_{1}^{2}\left[ h,%
\widetilde{F}_{1}\right] }+\underset{\symbol{94}\symbol{94}--\symbol{94}%
\symbol{94}--\symbol{94}\symbol{94}--\symbol{94}\symbol{94}}{\frac{3}{64}%
\alpha h\widehat{F}_{1}\widetilde{F}_{1}\left[ h,\widehat{F}_{1}\right] }. 
\notag
\end{eqnarray}
Notice that: 
\begin{equation}
\underset{-------------------------------------}{\frac{3}{4}\alpha h^{2}%
\widetilde{F}_{1}\left[ \widetilde{F}_{1},x\right] +\frac{3}{2}\alpha h^{2}%
\widetilde{F}_{1}\left[ x,\widetilde{F}_{1}\right] +\frac{3}{4}\alpha h^{2}%
\widetilde{F}_{1}\left[ \widetilde{F}_{1},x\right] }=0,
\label{Row_lem_deg5_beta0_7}
\end{equation}
\begin{equation}
\underset{--+--+--+--+--+--+--}{\frac{3}{2}\alpha h\widetilde{F}%
_{1}^{2}\left[ h,x\right] +\frac{3}{2}\alpha h\widetilde{F}_{1}^{2}\left[
x,h\right] }=0,  \label{Row_lem_deg5_beta0_8}
\end{equation}
\begin{equation}
\underset{\symbol{94}\symbol{94}\symbol{94}\symbol{94}\symbol{94}\symbol{94}%
\symbol{94}\symbol{94}\symbol{94}\symbol{94}\symbol{94}\symbol{94}\symbol{94}%
\symbol{94}\symbol{94}\symbol{94}\symbol{94}\symbol{94}\symbol{94}\symbol{94}%
\symbol{94}\symbol{94}\symbol{94}\symbol{94}\symbol{94}}{\frac{3}{8}\alpha
h^{3}\left[ x,\widehat{F}_{1}\right] +\frac{3}{8}\alpha h^{3}\left[ \widehat{%
F}_{1},x\right] }=0,  \label{Row_lem_deg5_beta0_9}
\end{equation}
\begin{equation}
\underset{\ast *****************************}{\frac{3}{2}\alpha hx\widetilde{%
F}_{1}\left[ h,\widetilde{F}_{1}\right] +\frac{3}{2}\alpha hx\widetilde{F}%
_{1}\left[ \widetilde{F}_{1},h\right] }=0,  \label{Row_lem_deg5_beta0_10}
\end{equation}
\begin{equation}
\underset{=======================}{\frac{3}{32}\alpha \widetilde{F}%
_{1}^{2}h\left[ \widetilde{F}_{1},\widehat{F}_{1}\right] +\frac{3}{32}\alpha
h\widetilde{F}_{1}^{2}\left[ \widehat{F}_{1},\widetilde{F}_{1}\right] }=0,
\label{Row_lem_deg5_beta0_11}
\end{equation}
\begin{equation}
\underset{--*--*--*--*--*--*--}{\frac{3}{32}\alpha \widetilde{F}_{1}^{2}%
\widehat{F}_{1}\left[ \widetilde{F}_{1},h\right] +\frac{3}{32}\alpha 
\widehat{F}_{1}\widetilde{F}_{1}^{2}\left[ h,\widetilde{F}_{1}\right] }=0,
\label{Row_lem_deg5_beta0_12}
\end{equation}
\begin{equation}
\underset{==*==*==*==*==*==*==}{\frac{1}{16}ah^{3}\left[ \widetilde{F}_{1},%
\widehat{F}_{1}\right] +\frac{1}{16}ah^{3}\left[ \widehat{F}_{1},\widetilde{F%
}_{1}\right] }=0,  \label{Row_lem_deg5_beta0_13}
\end{equation}
\begin{equation}
\underset{++*++*++*++*++*++*++}{\frac{1}{8}ah^{2}\widetilde{F}_{1}\left[ h,%
\widehat{F}_{1}\right] +\frac{1}{8}ah^{2}\widetilde{F}_{1}\left[ \widehat{F}%
_{1},h\right] }=0,  \label{Row_lem_deg5_beta0_14}
\end{equation}
\begin{equation}
\underset{--**--**--**--**--**--**--}{\frac{3}{64}\alpha h^{2}\widehat{F}%
_{1}\left[ \widetilde{F}_{1},\widehat{F}_{1}\right] +\frac{3}{64}\alpha h^{2}%
\widehat{F}_{1}\left[ \widehat{F}_{1},\widetilde{F}_{1}\right] }=0,
\label{Row_lem_deg5_beta0_15}
\end{equation}
\begin{equation}
\underset{\symbol{94}\symbol{94}--\symbol{94}\symbol{94}--\symbol{94}\symbol{%
94}--\symbol{94}\symbol{94}--\symbol{94}\symbol{94}--\symbol{94}\symbol{94}--%
\symbol{94}\symbol{94}--\symbol{94}\symbol{94}--\symbol{94}\symbol{94}}{%
\frac{3}{64}\alpha h\widehat{F}_{1}\widetilde{F}_{1}\left[ \widehat{F}%
_{1},h\right] +\frac{3}{64}\alpha h\widehat{F}_{1}\widetilde{F}_{1}\left[ h,%
\widehat{F}_{1}\right] }=0,  \label{Row_lem_deg5_beta0_16}
\end{equation}
\begin{equation}
\underset{\symbol{94}\symbol{94}**\symbol{94}\symbol{94}**\symbol{94}\symbol{%
94}**\symbol{94}\symbol{94}**\symbol{94}\symbol{94}**\symbol{94}\symbol{94}**%
\symbol{94}\symbol{94}**\symbol{94}\symbol{94}**\symbol{94}\symbol{94}}{%
\frac{1}{16}ah^{2}\widehat{F}_{1}\left[ \widetilde{F}_{1},h\right] +\frac{1}{%
16}ah^{2}\widehat{F}_{1}\left[ h,\widetilde{F}_{1}\right] }=0,
\label{Row_lem_deg5_beta0_17}
\end{equation}
\begin{equation}
\underset{--\#--\#--\#--\#--\#--\#--}{\frac{1}{4}ah\widetilde{F}%
_{1}^{2}\left[ h,\widetilde{F}_{1}\right] +\frac{1}{4}ah\widetilde{F}%
_{1}^{2}\left[ \widetilde{F}_{1},h\right] }=0,
\label{Row_lem_deg5_beta0_17b}
\end{equation}
\begin{equation}
\underset{---\#\#---\#\#---\#\#---\#\#---}{\frac{3}{8}ch^{2}\widetilde{F}%
_{1}\left[ h,\widetilde{F}_{1}\right] +\frac{3}{8}ch^{2}\widetilde{F}%
_{1}\left[ \widetilde{F}_{1},h\right] }=0  \label{Row_lem_deg5_beta0_17c}
\end{equation}
and that: 
\begin{eqnarray}
&&\underset{++++++++++++++++++++++++++}{\frac{9}{8}\alpha h^{2}\widehat{F}%
_{1}\left[ x,h\right] +\frac{3}{4}\alpha h^{2}x\left[ \widehat{F}%
_{1},h\right] +\frac{3}{8}\alpha h^{2}\widehat{F}_{1}\left[ h,x\right] }
\label{Row_lem_deg5_beta0_18} \\
&=&\frac{3}{4}\alpha h^{2}\left( \widehat{F}_{1}\left[ x,h\right] +x\left[ 
\widehat{F}_{1},h\right] \right) =-\left[ h,\frac{3}{4}\alpha h^{2}\widehat{F%
}_{1}x\right] ,  \notag
\end{eqnarray}
\begin{eqnarray}
&&\underset{--+++--+++--+++--+++--}{\frac{3}{32}\alpha h\widetilde{F}_{1}%
\widehat{F}_{1}\left[ h,\widehat{F}_{1}\right] +\frac{3}{64}\alpha h\widehat{%
F}_{1}^{2}\left[ h,\widetilde{F}_{1}\right] }  \label{Row_lem_deg5_beta0_19}
\\
&=&\frac{3}{64}\alpha h\left( \widehat{F}_{1}^{2}\left[ h,\widetilde{F}%
_{1}\right] +2\widetilde{F}_{1}\widehat{F}_{1}\left[ h,\widehat{F}%
_{1}\right] \right) =\left[ h,\frac{3}{64}\alpha h\widetilde{F}_{1}\widehat{F%
}_{1}^{2}\right] .  \notag
\end{eqnarray}
By (\ref{Row_lem_deg5_beta0_1})-(\ref{Row_lem_deg5_beta0_19}), 
\begin{eqnarray*}
&&\left[ h,4h^{3}z\right] +\frac{3}{16}ch^{2}\widetilde{F}_{1}\left[ 
\widetilde{F}_{1},h\right] +\frac{1}{2}dh^{3}\left[ \widetilde{F}%
_{1},h\right] +ah^{3}\left[ x,h\right]  \\
&&+\frac{3}{16}ch^{3}\left[ \widehat{F}_{1},h\right] -\left[ h,\frac{3}{4}%
\alpha h^{2}x\widehat{F}_{1}\right] +\left[ h,\frac{3}{64}\alpha h\widetilde{%
F}_{1}\widehat{F}_{1}^{2}\right]  \\
&=&0
\end{eqnarray*}
or equivalently 
\begin{eqnarray}
&&\left[ h,4h^{3}z\right] -\left[ h,ah^{3}x\right] -\left[ h,\frac{3}{16}%
ch^{3}\widehat{F}_{1}\right] -\left[ h,\frac{3}{32}ch^{2}\widetilde{F}%
_{1}^{2}\right]   \label{Row_lem_deg5_beta0_20} \\
&&-\left[ h,\frac{1}{2}dh^{3}\widetilde{F}_{1}\right] -\left[ h,\frac{3}{4}%
\alpha h^{2}x\widehat{F}_{1}\right] +\left[ h,\frac{3}{64}\alpha h\widetilde{%
F}_{1}\widehat{F}_{1}^{2}\right]   \notag \\
&=&0.  \notag
\end{eqnarray}
By Lemma \ref{Lemma_H_reduction} there exists $g\in \Bbb{C}$ such that 
\begin{equation}
4h^{3}z-\frac{3}{32}ch^{2}\widetilde{F}_{1}^{2}-\frac{1}{2}dh^{3}\widetilde{F%
}_{1}-ah^{3}x-\frac{3}{16}ch^{3}\widehat{F}_{1}-\frac{3}{4}\alpha h^{2}x%
\widehat{F}_{1}+\frac{3}{64}\alpha h\widetilde{F}_{1}\widehat{F}%
_{1}^{2}=gh^{4}.  \label{Row_lem_deg5_beta0_21}
\end{equation}
Since $h^{2}|4h^{3}z-\frac{3}{32}ch^{2}\widetilde{F}_{1}^{2}-\frac{1}{2}%
dh^{3}\widetilde{F}_{1}-ah^{3}x-\frac{3}{16}ch^{3}\widehat{F}_{1}-\frac{3}{4}%
\alpha h^{2}x\widehat{F}_{1}$ and $\alpha \neq 0,$ we see that $h^{2}|h%
\widetilde{F}_{1}\widehat{F}_{1}^{2}.$ Then $h|\widetilde{F}_{1}$ or $h|%
\widehat{F}_{1}.$ Notice that in both cases $h|\widetilde{F}_{1}.$ Indeed,
if $h|\widehat{F}_{1}$ then 
\begin{equation}
h^{3}|4h^{3}z-\frac{1}{2}dh^{3}\widetilde{F}_{1}-ah^{3}x-\frac{3}{16}ch^{3}%
\widehat{F}_{1}-\frac{3}{4}\alpha h^{2}x\widehat{F}_{1}+\frac{3}{64}\alpha h%
\widetilde{F}_{1}\widehat{F}_{1}^{2}  \label{Row_lem_deg5_beta0_22}
\end{equation}
Then, by (\ref{Row_lem_deg5_beta0_21}) and (\ref{Row_lem_deg5_beta0_22}) $%
h^{3}|-\frac{3}{32}ch^{2}\widetilde{F}_{1}^{2}$ and so $h|\widetilde{F}_{1}.$
Thus there is $b\in \Bbb{C}$ such that 
\begin{equation}
\widetilde{F}_{1}=bh.  \label{Row_lem_deg5_beta0_23}
\end{equation}
Now, by (\ref{Row_lem_deg5_beta0_23}) and by Lemma \ref{Lem_deg_7}, $%
F_{3}=bh^{3}.$

And, by (\ref{Row_lem_deg5_beta0_23}) and by Lemma \ref{Lem_deg_6}(1), 
\begin{equation*}
F_{2}=\frac{1}{4}\left( b^{2}h+\widehat{F}_{1}\right) h.
\end{equation*}
Thus 
\begin{equation}
F_{2}=h\overline{F}_{1}\qquad \text{and}\qquad \widehat{F}_{1}=4\overline{F}%
_{1}-b^{2}h.  \label{Row_lem_deg5_beta0_25}
\end{equation}
Now, one can repeat the same arguments as in the last part of the proof of
Lemma \ref{Lem_deg_6} obtaining the same formulas as in Lemma \ref{Lem_deg_6}%
(2) (of course with $\beta =0$).
\end{proof}

Because of Lemma \ref{Lem_deg_5_beta_0_b}, it does not make a difference
whether $\beta =0$ or not. Thus in the following lemma we does not assume
anything about $\beta .$

\begin{lemma}
\label{Lem_deg_5_beta_n0}Let $\deg \left[ F,G\right] <5$ and let $\alpha
,a,b,c,d,h,A,B,C,DE,K,L,\overline{F}_{1}$ be as in Lemma \ref{Lem_deg_6}(2)
with arbitrary $\beta $ (see Lemma \ref{Lem_deg_5_beta_0_b}). Then either%
\newline
(1) there exist $R,S,M,N,\widetilde{K},\widetilde{L},\widetilde{B},%
\widetilde{D}\in \Bbb{C}$ such that 
\begin{eqnarray*}
x &=&R\overline{F}_{1}+Sh,\qquad z=M\overline{F}_{1}+Nh, \\
G_{3} &=&\widetilde{K}h^{3}+\widetilde{L}h^{2}\overline{F}_{1},\qquad G_{2}=%
\widetilde{B}h^{2}+C\overline{F}_{1}^{2}+\widetilde{D}h\overline{F}_{1},
\end{eqnarray*}
\newline
or\newline
(2) there exists $f\in \Bbb{C}$ such that 
\begin{eqnarray*}
\overline{F}_{1} &=&fh,\qquad F_{2}=fh^{2}, \\
G_{4} &=&\left( E+\frac{3}{2}\alpha f\right) h^{4},\qquad G_{3}=\left(
K+Lf\right) h^{3}+\frac{3}{2}\alpha h^{2}x, \\
G_{2} &=&Ahx+\left( B+Cf^{2}+Df\right) h^{2}
\end{eqnarray*}
and 
\begin{equation*}
h=\frac{1}{M}\left[ z-\left( E-\frac{3}{4}bA+\frac{3}{4}\alpha f\right)
x\right] ,
\end{equation*}
where 
\begin{equation*}
M=\left( \frac{3}{4}K-\frac{3}{4}bD\right) f+\frac{1}{4}e+\frac{1}{4}\left(
3bC-\frac{1}{2}L\right) f^{2}.
\end{equation*}
\end{lemma}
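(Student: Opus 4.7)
The plan follows the $H$-reduction pattern developed in the preceding lemmas of this section. The starting point is that $\deg[F,G]<5$ forces the vanishing of the degree-$5$ component of the Poisson bracket,
\begin{equation*}
[F_4,z] + [F_3,G_2] + [F_2,G_3] + [x,G_4] = 0.
\end{equation*}
Substituting $F_4=h^4$, $F_3=bh^3$, $F_2=h\overline{F}_1$ together with the explicit formulas for $G_2,G_3,G_4$ from Lemma \ref{Lem_deg_6}(2), I would expand each bracket via the Leibniz rule. Because $[h,h^k P]=h^k[h,P]$, every contribution coming from brackets involving $F_4$ or $F_3$ lands immediately in $[h,\cdot]$-form. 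The contributions from $[F_2,G_3]$ and $[x,G_4]$ are then regrouped by pairing $\overline{F}_1[h,x]+x[h,\overline{F}_1]=[h,\overline{F}_1 x]$, and the two $[\overline{F}_1,x]$-type contributions (each of the form $\tfrac{3}{2}\alpha h^3[\overline{F}_1,x]$, with opposite signs) cancel.

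After this regrouping the identity takes the shape $[h,P]=0$, where
\begin{equation*}
P = 4h^3 z + (3bA-4E)h^3 x - 3\alpha h^2\overline{F}_1 x + \tfrac{6bC-L}{2}\,h^2\overline{F}_1^2 + (3bD-3K)h^3\overline{F}_1.
\end{equation*}
Since $h$ is a squarefree homogeneous polynomial of degree $1$ and $P$ is homogeneous of degree $4$, Lemma \ref{Lemma_H_reduction} forces $P=eh^4$ for some $e\in\Bbb{C}$. Dividing this identity by $h^2$ and inspecting divisibility by $h$ term-by-term, every summand except
\begin{equation*}
-3\alpha\overline{F}_1 x + \tfrac{6bC-L}{2}\overline{F}_1^2 \;=\; \overline{F}_1\bigl(\tfrac{6bC-L}{2}\overline{F}_1 - 3\alpha x\bigr)
\end{equation*}
is already a multiple of $h$. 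Therefore $h$ divides this product, and because $h$ has degree $1$ the dichotomy is: either $h\mid\overline{F}_1$ (which will give case (2)) or $h\mid\tfrac{6bC-L}{2}\overline{F}_1-3\alpha x$ (which will give case (1)).

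In case (1) the second divisibility yields constants $R,S\in\Bbb{C}$ with $x=R\overline{F}_1+Sh$ and $R=(6bC-L)/(6\alpha)$, exactly the value that cancels the $\overline{F}_1^2$-term of $P$. After this cancellation, dividing the reduced identity by $h$ and solving for $z$ gives $z=M\overline{F}_1+Nh$ for explicit constants $M,N$; substituting $x=R\overline{F}_1+Sh$ into the Lemma \ref{Lem_deg_6}(2) formulas for $G_2,G_3$ then produces the claimed simpler shapes with new coefficients $\widetilde{B},\widetilde{D},\widetilde{K},\widetilde{L}$. In case (2), $\overline{F}_1=fh$ substituted into $F_2,G_2,G_3,G_4$ is immediate and yields the displayed formulas; dividing the key identity by $h^3$ then expresses $4z$ as a linear combination of $x$ and $h$, from which $h$ can be solved (provided $M\neq 0$) in terms of $z$ and $x$ with the stated coefficients.

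The main obstacle is the bookkeeping in the bracket expansion: there are on the order of a dozen distinct terms coming from the four Poisson brackets, indexed by the formal objects $[h,x]$, $[h,\overline{F}_1]$, $[\overline{F}_1,x]$ and $[h,z]$, and one must track their coefficients carefully in order to verify both the cancellation of the $[\overline{F}_1,x]$-pair and the exact form of $P$ before invoking $H$-reduction. Once those computations are in hand, the divisibility dichotomy and the subsequent substitutions are routine.
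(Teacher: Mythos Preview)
Your proposal is correct and follows essentially the same approach as the paper's proof: the same degree-$5$ bracket identity, the same regrouping into $[h,P]=0$ with the identical polynomial $P$ (your $\tfrac{6bC-L}{2}$ is the paper's $3bC-\tfrac{1}{2}L$), the same $H$-reduction yielding $P=eh^4$, and the same divisibility dichotomy on $\overline{F}_1\bigl((3bC-\tfrac{1}{2}L)\overline{F}_1-3\alpha x\bigr)$ splitting into cases (1) and (2). The only point you leave slightly implicit is why $M\neq 0$ in case (2), but this is immediate from the relation $z=(\cdots)x+Mh$ since $x$ and $z$ are independent coordinates.
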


\begin{proof}
Since $\deg \left[ F,G\right] <5,$ we have 
\begin{equation}
\left[ F_{4},z\right] +\left[ F_{3},G_{2}\right] +\left[ F_{2},G_{3}\right]
+\left[ x,G_{4}\right] =0.  \label{Row_lem_deg5_beta_n0_1}
\end{equation}
By Lemma \ref{Lem_deg_10}(2), 
\begin{equation}
\left[ F_{4},z\right] =\left[ h^{4},z\right] =4h^{3}\left[ h,z\right]
=\left[ h,4h^{3}z\right]   \label{Row_lem_deg5_beta_n0_2}
\end{equation}
and by Lemmas \ref{Lem_deg_6}(2) and \ref{Lem_deg_5_beta_0_b}, 
\begin{equation}
\left[ x,G_{4}\right] =\left[ x,Eh^{4}+\frac{3}{2}\alpha h^{3}\overline{F}%
_{1}\right] =4Eh^{3}\left[ x,h\right] +\underset{--------}{\frac{9}{2}\alpha
h^{2}\overline{F}_{1}\left[ x,h\right] }+\underset{+++++++++}{\frac{3}{2}%
\alpha h^{3}\left[ x,\overline{F}_{1}\right] },
\label{Row_lem_deg5_beta_n0_3b}
\end{equation}
\begin{eqnarray}
\left[ F_{3},G_{2}\right]  &=&\left[ bh^{3},Ahx+Bh^{2}+C\overline{F}%
_{1}^{2}+Dh\overline{F}_{1}\right]   \label{Row_lem_deg5_beta_n0_4} \\
&=&3bh^{2}\left[ h,Ahx+Bh^{2}+C\overline{F}_{1}^{2}+Dh\overline{F}%
_{1}\right]   \notag \\
&=&\left[ h,3bAh^{3}x+3bCh^{2}\overline{F}_{1}^{2}+3bDh^{3}\overline{F}%
_{1}\right] ,  \notag
\end{eqnarray}
and
\begin{eqnarray}
&&\left[ F_{2},G_{3}\right]   \label{Row_lem_deg5_beta_n0_6} \\
&=&\left[ h\overline{F}_{1},Kh^{3}+Lh^{2}\overline{F}_{1}+\frac{3}{2}\alpha
h^{2}x\right]   \notag \\
&=&h\left[ \overline{F}_{1},Kh^{3}+Lh^{2}\overline{F}_{1}+\frac{3}{2}\alpha
h^{2}x\right] +\overline{F}_{1}\left[ h,Kh^{3}+Lh^{2}\overline{F}_{1}+\frac{3%
}{2}\alpha h^{2}x\right]   \notag \\
&=&3Kh^{3}\left[ \overline{F}_{1},h\right] +2Lh^{2}\overline{F}_{1}\left[ 
\overline{F}_{1},h\right] +\underset{---------}{3\alpha h^{2}x\left[ 
\overline{F}_{1},h\right] }+\underset{+++++++++}{\frac{3}{2}\alpha
h^{3}\left[ \overline{F}_{1},x\right] }  \notag \\
&&+Lh^{2}\overline{F}_{1}\left[ h,\overline{F}_{1}\right] +\underset{--------%
}{\frac{3}{2}\alpha h^{2}\overline{F}_{1}\left[ h,x\right] }.  \notag
\end{eqnarray}
Notice that: 
\begin{eqnarray}
&&\underset{------------------------------}{\frac{9}{2}\alpha h^{2}\overline{%
F}_{1}\left[ x,h\right] +3\alpha h^{2}x\left[ \overline{F}_{1},h\right] +%
\frac{3}{2}\alpha h^{2}\overline{F}_{1}\left[ h,x\right] }
\label{Row_lem_deg5_beta_n0_7} \\
&=&3\alpha h^{2}\left( \overline{F}_{1}\left[ x,h\right] +x\left[ \overline{F%
}_{1},h\right] \right) =3\alpha h^{2}\left[ x\overline{F}_{1},h\right]
=\left[ h,-3\alpha h^{2}x\overline{F}_{1}\right] ,  \notag
\end{eqnarray}
\begin{equation}
\underset{+++++++++++++++++++}{\frac{3}{2}\alpha h^{3}\left[ x,\overline{F}%
_{1}\right] +\frac{3}{2}\alpha h^{3}\left[ \overline{F}_{1},x\right] }=0.
\label{Row_lem_deg5_beta_n0_8}
\end{equation}
By (\ref{Row_lem_deg5_beta_n0_1})-(\ref{Row_lem_deg5_beta_n0_8}) 
\begin{eqnarray}
&&\left[ h,4h^{3}z\right] -\left[ h,4Eh^{3}x\right] +\left[
h,3bAh^{3}x+3bCh^{2}\overline{F}_{1}^{2}+3bDh^{3}\overline{F}_{1}\right] 
\label{Row_lem_deg5_beta_n0_9} \\
&&-\left[ h,3Kh^{3}\overline{F}_{1}\right] -\left[ h,Lh^{2}\overline{F}%
_{1}^{2}\right] +\left[ h,\frac{1}{2}Lh^{2}\overline{F}_{1}^{2}\right]
+\left[ h,-3\alpha h^{2}x\overline{F}_{1}\right]   \notag \\
&=&0  \notag
\end{eqnarray}
By the last equality and Lemma \ref{Lemma_H_reduction} there exists $e\in 
\Bbb{C}$ such that 
\begin{equation}
4h^{3}z-4Eh^{3}x+3bAh^{3}x+3bCh^{2}\overline{F}_{1}^{2}+3bDh^{3}\overline{F}%
_{1}-3Kh^{3}\overline{F}_{1}-\frac{1}{2}Lh^{2}\overline{F}_{1}^{2}-3\alpha
h^{2}x\overline{F}_{1}=eh^{4}.  \label{Row_lem_deg5_beta_n0_10}
\end{equation}
Since $h^{3}|4h^{3}z-4Eh^{3}x+3bAh^{3}x+3bDh^{3}\overline{F}_{1}-3Kh^{3}%
\overline{F}_{1},$ we see that $h^{3}|\left( 3bC-\frac{1}{2}L\right) h^{2}%
\overline{F}_{1}^{2}-3\alpha h^{2}x\overline{F}_{1}=h^{2}\overline{F}%
_{1}\left[ \left( 3bC-\frac{1}{2}L\right) \overline{F}_{1}-3\alpha x\right]
.\,$Thus $h|\left( 3bC-\frac{1}{2}L\right) \overline{F}_{1}-3\alpha x$ or $h|%
\overline{F}_{1}.$

In the first case (i.e. $h|\left( 3bC-\frac{1}{2}L\right) \overline{F}%
_{1}-3\alpha x$) there exists $\gamma \in \Bbb{C}$ such that 
\begin{equation}
\left( 3bC-\frac{1}{2}L\right) \overline{F}_{1}-3\alpha x=\gamma h
\label{Row_lem_deg5_beta_n0_10a}
\end{equation}
or equivalently 
\begin{eqnarray}
x &=&\frac{1}{3\alpha }\left[ \left( 3bC-\frac{1}{2}L\right) \overline{F}%
_{1}-\gamma h\right]   \label{Row_lem_deg5_beta_n0_10b} \\
&=&\left( \frac{1}{4}b-\frac{5}{24}\frac{\beta }{\alpha }\right) \overline{F}%
_{1}-\frac{\gamma }{3\alpha }h.  \notag
\end{eqnarray}
This gives the formula for $x.$ Using (\ref{Row_lem_deg5_beta_n0_10a}) we
can rewrite (\ref{Row_lem_deg5_beta_n0_10}) as 
\begin{equation*}
4h^{3}z-4Eh^{3}x+3bAh^{3}x+3bDh^{3}\overline{F}_{1}-3Kh^{3}\overline{F}%
_{1}+\gamma h^{3}\overline{F}_{1}=eh^{4}.
\end{equation*}
Thus 
\begin{equation}
z=\left( E-3bA\right) x+\left( \frac{3}{4}K-\frac{3}{4}bD-\frac{1}{4}\gamma
\right) \overline{F}_{1}+\frac{1}{4}eh.  \label{Row_lem_deg5_beta_n0_10c}
\end{equation}
Now, the formula for $z$ is obtained by substituting (\ref
{Row_lem_deg5_beta_n0_10b}) in (\ref{Row_lem_deg5_beta_n0_10c}), and the
formulas for $G_{3}$ and $G_{2}$ are obtained by substituting (\ref
{Row_lem_deg5_beta_n0_10b}) in the formulas of Lemma \ref{Lem_deg_6}(2) or 
\ref{Lem_deg_5_beta_0_b}.

In the second case (i.e. $h|\overline{F}_{1}$) there exists $f\in \Bbb{C}$
such that 
\begin{equation}
\overline{F}_{1}=fh.  \label{Row_lem_deg5_beta_n0_13}
\end{equation}
Then 
\begin{equation}
F_{2}=fh^{2}.  \label{Row_lem_deg5_beta_n0_14}
\end{equation}
Usimg (\ref{Row_lem_deg5_beta_n0_13}) we can rewrite (\ref
{Row_lem_deg5_beta_n0_10}) as
\begin{equation*}
4h^{3}z-4Eh^{3}x+3bAh^{3}x+3bf^{2}Ch^{4}+3bfDh^{4}-3fKh^{4}-\frac{1}{2}%
f^{2}Lh^{4}-3\alpha fh^{3}x=eh^{4}.
\end{equation*}
Thus
\begin{equation}
z=\left( E-\frac{3}{4}bA+\frac{3}{4}\alpha f\right) x+Mh.
\label{Row_lem_deg5_beta_n0_15}
\end{equation}
where 
\begin{equation*}
M=\left( \frac{3}{4}K-\frac{3}{4}bD\right) f+\frac{1}{4}e+\frac{1}{4}\left(
3bC-\frac{1}{2}L\right) f^{2}.
\end{equation*}
By (\ref{Row_lem_deg5_beta_n0_15}) $M\neq 0$ and $h=\frac{1}{M}\left[
z-\left( E-\frac{3}{4}bA+\frac{3}{4}\alpha f\right) x\right] .$ Substituting 
$\overline{F}_{1}=fh$ in the formulas of Lemma \ref{Lem_deg_6}(2) gives the
formulas for $G_{2},G_{3}$ and $G_{4}.$
\end{proof}

Now we are in a position to prove

\begin{theorem}
There is no pair of polynomials $F,G$ of the form 
\begin{eqnarray*}
F &=&x+F_{2}+F_{3}+F_{4},\qquad F_{4}\neq 0, \\
G &=&z+G_{2}+\cdots +G_{6},\qquad G_{6}\neq 0,
\end{eqnarray*}
where $F_{4},G_{6}$ are given by the formulas of Lemma \ref{Lem_deg_10}(2),
such that $\deg \left[ F,G\right] <4.$
\end{theorem}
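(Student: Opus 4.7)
My plan is to use the degree-$4$ component of $[F, G]$ combined with the two-variable reduction already visible in Lemma \ref{Lem_deg_5_beta_n0}, and close via Moh's theorem and the Jung--van der Kulk theorem, mirroring the pattern of the squarefree-case theorem. From $\deg[F, G] < 4$ the degree-$4$ part of $[F, G]$ vanishes:
\begin{equation*}
[F_{3}, z] + [F_{2}, G_{2}] + [x, G_{3}] = 0.
\end{equation*}
I would expand each of the three brackets by substituting the explicit formulas from Lemma \ref{Lem_deg_5_beta_n0} together with the inherited formulas from Lemmas \ref{Lem_deg_5_beta_0_b}, \ref{Lem_deg_6}(2), \ref{Lem_deg_7}, and \ref{Lem_deg_8}, using the Leibniz rule for Poisson brackets and pairing cancelling contributions exactly as in the preceding lemma proofs. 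After collecting the remaining terms, the identity should rearrange to $[h, \Phi] = 0$ for an explicit polynomial $\Phi$ of degree $3$; since $\deg h = 1$ divides $3$, the $h$-reduction (Lemma \ref{Lemma_H_reduction}) then forces $\Phi = \kappa h^{3}$ for some $\kappa \in \Bbb{C}$.

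The formulas of Lemma \ref{Lem_deg_5_beta_n0} already reveal that in both subcases the linear form $h$ lies in $\Bbb{C} x + \Bbb{C} z$: in Case (1) this is explicit from $x = R \overline{F}_{1} + S h$ and $z = M \overline{F}_{1} + N h$ (with $RN - SM \neq 0$ since $x, z$ are linearly independent), while in Case (2) it is given directly by $h = M^{-1}\bigl(z - (E - \frac{3}{4} b A + \frac{3}{4} \alpha f) x\bigr)$. In either case $F_{2}, F_{3}, F_{4}$ and $G_{2}, \dots, G_{6}$ are polynomials in $h$ (and, in Case (1), in $\overline{F}_{1}$), all of them lying in $\Bbb{C}[x, z]$; hence $F, G \in \Bbb{C}[x, z]$ and the pair is algebraically independent since its linear parts are $x$ and $z$. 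Consequently the Poisson bracket collapses to $[F, G] = (F_{x} G_{z} - F_{z} G_{x})\,[x, z]$, and the hypothesis $\deg[F, G] \leq 3$ gives $\deg J \leq 1$ for the Jacobian $J := F_{x} G_{z} - F_{z} G_{x}$.

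The relation $\Phi = \kappa h^{3}$ obtained above is then used to pin down the remaining parameters and force the degree-$1$ part of $J$ to vanish, so that $J \in \Bbb{C}^{\ast}$ (otherwise $\Phi = \kappa h^{3}$ combined with $F_{4} = h^{4}$ and $G_{6} = \alpha h^{6}$ produces an inconsistency paralleling the ``perfect square'' collapse that concluded the squarefree-case proof). With $J$ a nonzero constant, Theorem \ref{tw_JC_dim2_Moh} of Moh says that $(F, G) : \Bbb{C}^{2} \to \Bbb{C}^{2}$ is a polynomial automorphism of multidegree $(4, 6)$, contradicting the Jung--van der Kulk theorem since $4 \nmid 6$ and $6 \nmid 4$, exactly as at the end of the proof of Theorem \ref{tw_456_implies_rown}. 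The main obstacle is the lengthy symbolic bookkeeping in the first step: expanding the three Poisson brackets via Leibniz, then matching the many cancelling pairs among terms indexed by the parameters $\alpha, \beta, a, b, c, d, e, f, M, R, S, \ldots$ is intricate and error-prone, and Case (1) and Case (2) of Lemma \ref{Lem_deg_5_beta_n0} generally have different cancellation patterns and require separate treatment before the uniform Moh + Jung--van der Kulk closing argument can be applied.
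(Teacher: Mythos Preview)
Your opening move is right: from $\deg[F,G]<4$ you get $[F_3,z]+[F_2,G_2]+[x,G_3]=0$, and the case split according to Lemma~\ref{Lem_deg_5_beta_n0} is exactly what the paper does. Your observation that in both subcases $h,\overline{F}_1\in\Bbb{C}x+\Bbb{C}z$ (hence $F,G\in\Bbb{C}[x,z]$) is also correct.

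The gap is in your closing argument. You want to apply Moh and Jung--van der Kulk, which requires the two-variable Jacobian $J$ to be a nonzero constant. From $\deg[F,G]\leq 3$ you only get $\deg J\leq 1$; to force $\deg J=0$ you would need the degree-$3$ part of $[F,G]$, namely $[x,G_2]+[F_2,z]$, to vanish. You assert that the relation $\Phi=\kappa h^3$ coming from the degree-$4$ equation ``pins down the remaining parameters'' to make this happen, but you never carry this out, and the parenthetical about a ``perfect-square collapse'' is not an argument.

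In fact the paper shows that the degree-$4$ equation is \emph{already} contradictory, so one never reaches the degree-$3$ level at all. In Case~(2) the $h$-reduction gives
\[
3bh^{2}z+2fAh^{2}x-3Ph^{2}x-\tfrac{3}{2}\alpha hx^{2}=lh^{3},
\]
and since $\alpha\neq 0$ this forces $h\mid x$, contradicting the explicit formula $h=\tfrac{1}{M}\bigl(z-(E-\tfrac{3}{4}bA+\tfrac{3}{4}\alpha f)x\bigr)$. In Case~(1) the paper does not rewrite the equation as $[h,\Phi]=0$ but instead observes that the degree-$4$ identity factors as
\[
\Bigl[(3bM-2\widetilde{B}-3KR+S\widetilde{L})h^{2}-2R\widetilde{L}h\overline{F}_{1}+2C\overline{F}_{1}^{\,2}\Bigr]\cdot[h,\overline{F}_{1}]=0,
\]
and since $[h,\overline{F}_{1}]\neq 0$ (here $h,\overline{F}_{1}$ are linearly independent) while the quadratic factor has $\overline{F}_{1}^{\,2}$-coefficient $2C=\tfrac{3}{4}\alpha\neq 0$, this is impossible. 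Equivalently, in your language, the $\Phi$ you would construct has a nonzero $\overline{F}_{1}^{\,3}$-coefficient, so $\Phi=\kappa h^{3}$ fails outright. Either way, the contradiction is immediate from the degree-$4$ computation and the appeal to Moh and Jung--van der Kulk is superfluous.
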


\begin{proof}
Assume that there exists such a pair. Then 
\begin{equation}
\left[ F_{3},z\right] +\left[ F_{2},G_{2}\right] +\left[ x,G_{3}\right] =0.
\label{row_tw_ost_1}
\end{equation}
Assume that $F$ and $G$ satisfy Lemma  \ref{Lem_deg_5_beta_n0}(2). Then 
\begin{equation*}
\left[ F_{3},z\right] =\left[ bh^{3},z\right] =3bh^{2}\left[ h,z\right]
=\left[ h,3bh^{2}z\right] ,
\end{equation*}
\begin{eqnarray*}
\left[ F_{2},G_{2}\right]  &=&\left[ fh^{2},Ahx+\left( B+Cf^{2}+Df\right)
h^{2}\right]  \\
&=&2fh\left[ h,Ahx\right] =\left[ h,2fAh^{2}x\right] 
\end{eqnarray*}
and 
\begin{equation*}
\left[ x,G_{3}\right] =\left[ x,Ph^{3}+\frac{3}{2}\alpha h^{2}x\right] ,
\end{equation*}
where $P=K+Lf.$ Thus 
\begin{equation*}
\left[ x,G_{3}\right] =3Ph^{2}\left[ x,h\right] +3\alpha hx\left[ x,h\right]
=-\left[ h,3Ph^{2}x\right] -\left[ h,\frac{3}{2}\alpha hx^{2}\right] 
\end{equation*}
and so 
\begin{equation*}
\left[ h,3bh^{2}z+2fAh^{2}x-3Ph^{2}x-\frac{3}{2}\alpha hx^{2}\right] =0.
\end{equation*}
By Lemma \ref{Lemma_H_reduction} there exists $l\in \Bbb{C}$ such that 
\begin{equation*}
3bh^{2}z+2fh^{2}x-3Ph^{2}x-\frac{3}{2}\alpha hx^{2}=lh^{3}.
\end{equation*}
Since $h^{2}|3bh^{2}z+2fAh^{2}x-3Ph^{2}x$ and $\alpha \neq 0,$ we see that $%
h|x.$ But this means that $h=\mu x$ for some $\mu \in \Bbb{C}^{*}$ (remember
that $\alpha h^{6}\neq 0$). But this contradicts $h=\frac{1}{M}\left[
z-\left( E-\frac{3}{4}bA+\frac{3}{4}\alpha f\right) x\right] .$

Now, assume that $F$ and $G$ satisfy Lemma \ref{Lem_deg_5_beta_n0}(1). Then
\begin{equation}
\left[ F_{3},z\right] =\left[ bh^{3},M\overline{F}_{1}+Nh\right]
=3bMh^{2}\left[ h,\overline{F}_{1}\right] ,  \label{row_tw_ost_2}
\end{equation}
\begin{eqnarray}
\left[ F_{2},G_{2}\right]  &=&\left[ h\overline{F}_{1},\widetilde{B}h^{2}+C%
\overline{F}_{1}^{2}+\widetilde{D}h\overline{F}_{1}\right] =\left[ h%
\overline{F}_{1},\widetilde{B}h^{2}+C\overline{F}_{1}^{2}\right] 
\label{row_tw_ost_3} \\
&=&h\left[ \overline{F}_{1},\widetilde{B}h^{2}+C\overline{F}_{1}^{2}\right] +%
\overline{F}_{1}\left[ h,\widetilde{B}h^{2}+C\overline{F}_{1}^{2}\right]  
\notag \\
&=&2\widetilde{B}h^{2}\left[ \overline{F}_{1},h\right] +2C\overline{F}%
_{1}^{2}\left[ h,\overline{F}_{1}\right]   \notag
\end{eqnarray}
and 
\begin{eqnarray}
\left[ x,G_{3}\right]  &=&\left[ R\overline{F}_{1}+Sh,Kh^{3}+\widetilde{L}%
h^{2}\overline{F}_{1}\right]   \label{row_tw_ost_4} \\
&=&R\left[ \overline{F}_{1},Kh^{3}+\widetilde{L}h^{2}\overline{F}_{1}\right]
+S\left[ h,Kh^{3}+\widetilde{L}h^{2}\overline{F}_{1}\right]   \notag \\
&=&3KRh^{2}\left[ \overline{F}_{1},h\right] +2R\widetilde{L}h\overline{F}%
_{1}\left[ \overline{F}_{1},h\right] +S\widetilde{L}h^{2}\left[ h,\overline{F%
}_{1}\right] .  \notag
\end{eqnarray}
By (\ref{row_tw_ost_1})-(\ref{row_tw_ost_4}),
\begin{equation*}
\left[ \left( 3bM-2\widetilde{B}-3KR+S\widetilde{L}\right) h^{2}-2R%
\widetilde{L}h\overline{F}_{1}+2C\overline{F}_{1}^{2}\right] \cdot \left[ h,%
\overline{F}_{1}\right] =0.
\end{equation*}
By Lemma \ref{Lem_deg_5_beta_n0}(1), $h$ and $\overline{F}_{1}$ are
algebraically independent. Thus $\left[ h,\overline{F}_{1}\right] \neq 0.$
Since also $C=\frac{3}{8}\alpha \neq 0,$ we see that $\left( 3bM-2\widetilde{%
B}-3KR+S\widetilde{L}\right) h^{2}-2R\widetilde{L}h\overline{F}_{1}+2C%
\overline{F}_{1}^{2}\neq 0,$ a contradiction.
\end{proof}

\vspace{1cm}

\textsc{Marek Kara\'{s}\newline
Instytut Matematyki\newline
Uniwersytetu Jagiello\'{n}skiego\newline
ul. \L ojasiewicza 6}\newline
\textsc{30-348 Krak\'{o}w\newline
Poland\newline
} e-mail: Marek.Karas@im.uj.edu.pl


\begin{thebibliography}{99}
\bibitem{Jung}  H.W.E. Jung, {\textsl{Uber ganze birationale
Transformationen der Ebene,}} J. reine angew. Math. 184 (1942), 161-174.

\bibitem{van den Essen}  A. van den Essen, {\textsl{Polynomial Automorphisms
and the Jacobian Conjecture, }}Birkhauser Verlag, Basel-Boston-Berlin (2000).

\bibitem{Karas}  M. Kara\'{s}, {\textsl{There is no tame automorphism of }}$%
\Bbb{C}^{3}$ {\textsl{with multidegree }}$(3,4,5),$ Proc. Am. Math. Soc.,
139, no. 3 (2011) 769-775.

\bibitem{Karas2}  M. Kara\'{s}, {\textsl{Tame automorphisms of }}$\Bbb{C}^{3}
$ {\textsl{with multidegree of the form }}$(p_{1},p_{2},d_{3}),$ Bull. Pol.
Acad. Sci., to appear; arXiv:0904.1266v1 [math.AG] 6 Apr 2009.

\bibitem{Karas3}  M. Kara\'{s}, {\textsl{Tame automorphisms of }}$\Bbb{C}^{3}
$ {\textsl{with multidegree of the form }}$(3,d_{2},d_{3}),$ {J.\ Pure\
Appl.\ Algebra, 12 (2010), 2144-2147.}

\bibitem{Karas4}  M. Kara\'{s}, {\textsl{Multidegrees of tame automorphisms
of }}$\Bbb{C}^{n},$arXiv:1103.1217v1 [math.AG] 7 Mar 2011.

\bibitem{Karas Zygad}  M. Kara\'{s}, J. Zyga\l o, {\textsl{On multidegree of
tame and wild automorphisms of }}$\Bbb{C}^{3},$ {J.\ Pure\ Appl.\ Algebra,to
apper; }arXiv:0911.5468v1 [math.AG] 29 Nov 2009.

\bibitem{Kulk}  W. van der Kulk, {\textsl{On polynomial rings in two
variables,}} Nieuw Archief voor Wiskunde (3) 1 (1953), 33-41.

\bibitem{Moh}  T. Moh, {\textsl{On the global Jacobian Conjecture and the
configuration of roots, }}J. Reine Angew. Math., 340 (1983), 140-212.

\bibitem{sh umb1}  I.P. Shestakov, U.U. Umirbayev, {\textsl{The Nagata
automorphism is wild,}} Proc. Natl. Acad. Sci. USA 100 (2003),12561-12563.

\bibitem{sh umb2}  I.P. Shestakov, U.U. Umirbayev, {\textsl{The tame and the
wild automorphisms of polynomial rings in three variables,} J. Amer. Math.
Soc. 17 (2004), 197-227.}

\bibitem{sh umb3}  I.P. Shestakov, U.U. Umirbayev, {\textsl{Poisson brackets
and two-generated subalgebras of rings of polynomials,} J. Amer. Math. Soc.
17 (2004), 181-196.}

\bibitem{Zygadlo}  J. Zygad\l o, {\textsl{On multidegrees of polynomial
automorphisms of }}$\Bbb{C}^{3}$, arXiv:0903.5512v1 [math.AC] 31 Mar 2009.
\end{thebibliography}
\end{document}